\def\div{\operatorname{div}}
\def\Th{\mathcal{T}_h}
\def\Itau{\mathcal{I}_\tau}
\def\la{\langle}
\def\ra{\rangle}
\def\Th{\mathcal{T}_h}
\def\Vh{\mathcal{V}_h}
\def\Qh{\mathcal{Q}_h}
\def\Xh{\mathcal{X}_h}
\def\bx{\mathbf{x}}
\def\nn{\nonumber}
\newcommand{\jrho}{[\![\rho]\!]}
\newcommand{\arho}{\left\{\rho\right\}}
\DeclarePairedDelimiter{\norm}{\|}{\|}
\DeclarePairedDelimiter{\snorm}{|}{|}
\def\u{\mathbf{u}}
\def\w{\mathbf{w}}
\def\vv{\mathbf{v}}
\def\phm{\phantom{$-$}}
\newtheorem{lemma}{Lemma}
\newtheorem{problem}[lemma]{Problem}
\newtheorem{theorem}[lemma]{Theorem}
\theoremstyle{definition}
\newtheorem{remark}[lemma]{Remark}
\def\dt{\partial_t}
\def\dtau{d_\tau^{n+1}}
\def\ddt{\frac{\mathrm{d}}{\mathrm{d}t}}
\def\softd{{\leavevmode\setbox1=\hbox{d}%
		\hbox to 1.05\wd1{d\kern-0.4ex{\char039}\hss}}}%cstocs
\journal{Journal of Computational Physics}
\def\ps@pprintTitle{%
 \let\@oddhead\@empty
 \let\@evenhead\@empty
 \let\@oddfoot\@empty
 \let\@evenfoot\@empty
}
\begin{document}

\begin{frontmatter}

%\title{Phase field clipping in a fully-discrete energy-stable method for the two-phase Navier-Stokes Cahn-Hilliard mixture model with non-matching densities}

% \title{A simple fully-discrete energy-stable method for the two-phase Navier-Stokes Cahn-Hilliard mixture model with non-matching densities}

\title{A simple, fully-discrete, unconditionally energy-stable method for the two-phase Navier-Stokes Cahn-Hilliard model with arbitrary density ratios}

%\title{Structure-preserving approximation for the density-dependent Cahn-Hilliard-Navier-Stokes system}

\renewcommand{\thefootnote}{\fnsymbol{footnote}}

\author[1]{A. Brunk}
\ead{abrunk@uni-mainz.de}
\author[2]{M.F.P. ten Eikelder}
\ead{marco.eikelder@tu-darmstadt.de}

\address[1]{Institute of Mathematics, Johannes Gutenberg-University Mainz, Germany}
\address[2]{Institute for Mechanics, Computational Mechanics Group, Technical University of Darmstadt, Germany}

% Define the actual footnote with a star (*)
\footnotetext[1]{The authors contributed equally to this work.}

% Restore default footnote numbering for the main text
\renewcommand{\thefootnote}{\arabic{footnote}}
\setcounter{footnote}{0}  % Reset numbering

\date{\today}

\begin{keyword}
Multiphase flow \sep Phase-field modeling  \sep Navier-Stokes Cahn-Hilliard \sep Energy-stable \sep Mass averaged velocity
\end{keyword}

\begin{abstract}
% The two-phase Navier-Stokes Cahn-Hilliard (NSCH) mixture model is a key framework for simulating multiphase flows with non-matching densities. Developing fully-discrete, energy-stable schemes for this model remains challenging, especially when incorporating phase field extension for numerical stability. While various methods have been proposed, ensuring provable energy stability under phase-field modifications remains an open problem. We propose a simple, fully-discrete, energy-stable method for the NSCH mixture model that ensures stability with respect to a clipped phase field functional. The method is based on an alternative but equivalent formulation using mass-averaged velocity and volume fraction-based order parameters, simplifying implementation while preserving theoretical consistency. Numerical results demonstrate that the proposed scheme is robust, accurate, and stable for large density ratios, addressing key challenges in the discretization of NSCH models.\\

The two-phase Navier-Stokes Cahn-Hilliard (NSCH) mixture model is a key framework for simulating multiphase flows with non-matching densities.
Developing fully discrete, energy-stable schemes for this model remains challenging, due to the possible presence of negative densities.
While various methods have been proposed, ensuring provable energy stability under phase-field modifications, like positive extensions of the density, remains
an open problem. 
We propose a simple, fully discrete, energy-stable method for the NSCH mixture model that ensures stability with respect to the energy functional, 
where the density in the kinetic energy is positively extended.
The method is based on an alternative but equivalent formulation using mass-averaged velocity 
and volume-fraction-based order parameters, simplifying implementation while preserving theoretical consistency.
Numerical results demonstrate that the proposed scheme is robust, accurate, and stable for large density ratios, addressing key challenges 
in the discretization of NSCH models.
\end{abstract}

\end{frontmatter}

\section{Introduction}\label{sec:intro}

In recent decades, diffuse-interface, or phase-field, models have emerged as a powerful framework for simulating a wide range of multiphase flows, including contact line dynamics \cite{jacqmin2000contact,yue2011can}, complex fluids \cite{anderson1998diffuse,yue2004diffuse}, droplet dynamics \cite{espath2016energy,ten2024divergence}, fracture mechanics \cite{ambati2015review,wu2020phase}, and tumor growth \cite{oden2010general,garcke2018optimal}. For a comprehensive overview of diffuse-interface models in fluid mechanics, we refer to \cite{bray1994theory,anderson1998diffuse,cates2018theories}.
Within the domain of incompressible, isothermal, viscous two-phase flows, the prototypical phase-field model is the Navier-Stokes Cahn-Hilliard (NSCH) system. This model has proven to be a robust and versatile tool for simulating free-surface flows involving topological changes, surface tension effects, and significant density variations between phases. Typical applications include coalescence, and breakup of bubbles and droplets.

The development of Navier-Stokes Cahn-Hilliard (NSCH) models began with early efforts to address systems with matching densities, as introduced by Hohenberg and Halperin in 1977 \cite{hohenberg1977theory}. A derivation based on continuum mechanics was later provided by Gurtin et al. in 1996 \cite{gurtinmodel}. For an analysis of the asymptotic behavior of this model, we refer to \cite{gal2010asymptotic}, while the existence of weak solutions has been studied in \cite{colli2012global}. In recent years, the development of NSCH models has been extended to cases involving non-matching densities, with significant contributions from Lowengrub and Truskinovsky \cite{lowengrub1998quasi}, Boyer \cite{boyer2002theoretical}, Ding et al. \cite{ding2007diffuse}, Abels et al. \cite{abels2012thermodynamically}, Shen et al. \cite{shen2013mass}, Aki et al. \cite{aki2014quasi}, among others. Some of these NSCH models are established using elements of same underlying continuum mixture theory framework \cite{truesdell1960classical,truesdell1984historical}. Even though some of these models aim to describe the same physics, and are connected to the same underlying continuum mixture theory framework, the models are (seemingly) different. In fact, the models are often classified into so-called mass-averaged velocity models \cite{lowengrub1998quasi,shen2013mass,aki2014quasi} and volume-averaged velocity models \cite{boyer2002theoretical,ding2007diffuse,abels2012thermodynamically}. The existence for various models and classes of models underscores that consensus in the literature on the modeling of the non-matching density case has been missing. However, several of these models have been rigorously linked to sharp-interface formulations through sharp-interface limits \cite{abels2012thermodynamically,magaletti2013sharp,aki2014quasi}. In addition, two-phase NSCH models have been expanded in several aspects, e.g. to diffuse-interface models with $N$-phases \cite{boyer2014hierarchy,dong2018multiphase,ten2024thermodynamically,eikelder2024unified}, chemotaxis \cite{lam2018thermodynamically}, non-isothermal fluids \cite{zhao2023strong,brunk2024nonisothermal}, and dynamic boundary conditions \cite{giorgini2023two}.

In recent work of the second author \cite{eikelder2023unified,eikelder2024unified}, a unified framework for NSCH models with non-matching densities is proposed as a resolution to the longstanding inconsistencies among existing NSCH models. Although variations arise from constitutive choices, the unified framework leads to \textit{a single consistent NSCH mixture model}, invariant to the choice of fundamental variables. For example, the NSCH mixture model may be formulated in terms of mass-averaged and volume-averaged velocities, and these are shown to be equivalent through simple variable transformations. This framework is based on continuum mixture theory, as proposed by Truesdell and Toupin \cite{truesdell1960classical}. The framework shows that most existing NSCH models are only partially aligned with mixture theory, that materializes in inconsistencies in the balance laws or incompatibility in the single-fluid limit. By applying small modifications, these models can be shown to align with the framework in \cite{eikelder2023unified}, providing consistency, a natural connection to mixture theory, and a reduction to the incompressible Navier-Stokes equations in the single-fluid regime.

Despite significant progress, the discretization of NSCH models remains a major challenge. For the case of matching densities, extensive analysis has been conducted on energy-stable schemes \cite{feng2006fully,kay2007efficient,chen2016efficient,diegel2017convergence,brunk2023second}. However, for non-matching densities, the choice of the underlying NSCH formulation plays a crucial role in numerical algorithm development. For example, there are particular differences between existing volume-averaged and mass-averaged velocity formulations in the literature that are important for the development of numerical algorithms. First, in absence of mass transfer between fluids, the volume-averaged velocity formulations have a divergence-free velocity, whereas the mass-averaged velocity formulations in general do not. Although these models are (at the core) equivalent on the continuum level \cite{eikelder2023unified,eikelder2024unified}, the discretization of the formulation with a non-divergence-free velocity is often considered more challenging. Second, models based on volume-averaged velocities \cite{abels2012thermodynamically,ding2007diffuse,khanwale2023projection} exhibit weaker coupling than those based on mass-averaged velocities \cite{lowengrub1998quasi,shen2013mass}. Stronger coupling is sometimes suggested as a contributing factor to the increased difficulty of discretizing mass-averaged formulations \cite{abels2012thermodynamically}. As a result, most proposed numerical algorithms discretize NSCH models with volume-averaged velocities \cite{guillen2014splitting,garcke2016stable,chen2016efficient}. Energy-stable schemes within this class are typically discretizations of the model proposed by Abels et al. \cite{abels2012thermodynamically}.

Another crucial aspect of NSCH model formulations is the choice of the order parameter, with two common options being volume-fraction-based and concentration-based (also referred to as mass-fraction-based) order parameters. While these formulations are theoretically equivalent at the continuum level \cite{eikelder2023unified,eikelder2024unified}, discretizing NSCH models using concentration-based order parameters introduces additional complexities. Two primary challenges arise in this case: (i) the nonaffine mapping of the density in terms of the concentration, (ii) the occurrence of the density in both the Korteweg stress and the chemical potential. As a result, discretizations of concentration-based formulations are often more involved and require additional considerations \cite{guo2014numerical,guo2017mass}.

%semi-discrete second order energy-stable finite difference \cite{guo2022second} -> motivation: existing energy stable methods are only first order

Beyond the choice of order parameters, handling large density ratios presents another major challenge in the numerical discretization of NSCH models. Developing an energy-stable numerical scheme for such cases is particularly difficult, especially when incorporating positive extensions of the density. Many existing methods prove energy stability but rely on positivity of the density. The actual implementation typically rely on phase field modifications that are not considered their theoretical proofs, cf. \cite{Gong18}. A key difficulty lies in ensuring that the numerical scheme remains stable with respect to a well-defined energy functional, even when phase field modifications like positive extensions of the density are employed.

In this work, we construct a provably energy-stable scheme where stability holds with respect to an energy functional based on positive extensions of the density. We develop a simple, fully-discrete, monolithic, mass-conservative, energy-stable method for the two-phase NSCH mixture model with non-matching densities. To establish the energy-stable method, we introduce an alternative -- equivalent --formulation that enables a fully-discrete, structure-preserving discretization. This formulation adopts mass-averaged velocity and volume fraction-based order parameters as unknowns, where the mass-averaged velocity is particularly advantageous for implementation due to the complexity of the NSCH mixture model’s momentum equation \cite{eikelder2023unified}.

%A key challenge in the discretization is handling large density ratios. Developing an energy-stable numerical scheme for the two-phase Navier-Stokes Cahn-Hilliard (NSCH) model with non-matching densities is challenging, particularly when incorporating clipping of the phase field variable. Many existing methods claim energy stability but rely on phase field modifications that invalidate their theoretical proofs. A key difficulty is ensuring that the numerical scheme remains stable with respect to a well-defined energy functional, even when clipping is introduced. In this work, we construct a provably energy-stable scheme where stability holds with respect to an energy functional based on the clipped phase field variable.

%In the current article we develop a simple, fully-discrete, monolithic, mass-conservative, energy-stable finite element method for the two-phase NSCH mixture model with non-matching densities. To establish the energy-stable method, we introduce an alternative -- equivalent -- formulation that allows for a simple fully-discrete structure-preserving method. This formulation adopts the mass-averaged velocity and volume fraction-based order parameters as unknowns. We note that in particular using the mass-averaged velocity is simpler in terms of implementation due to the rather complicated momentum equation of the NSCH mixture model \cite{eikelder2023unified}, and allows for a fully-discretize energy-stable discretization, whereas the volume averaged formulation of the NSCH mixture model does not \cite{ten2024divergence}.

The remainder of the paper is outlined as follows. In \cref{sec:model} we present the consistent NSCH model and analyze its properties. Additionally, we present the alternative but equivalent formulation that forms the basis for the discretization scheme. In \cref{{sec:scheme}} we introduce the fully-discrete numerical scheme and discuss its properties. Next, in \cref{sec:numerics} provide numerical examples. Finally, we close the paper with a summary and outlook in \cref{sec:conclusion and outlook}.

\section{The Navier-Stokes Cahn-Hilliard model}\label{sec:model}

This section is concerned with the Navier-Stokes Cahn-Hilliard model. \cref{subsec:gov eq} presents the governing equations, and \cref{subsec:reform} provides an equivalent alternative formulation. Finally, \ref{subsec:dim less} presents the non-dimensional formulation.

\subsection{Governing equations and properties}\label{subsec:gov eq}

We consider the Navier-Stokes Cahn-Hilliard system with non-matching densities given by:
\begin{subequations}\label{eq:sys1}
  \begin{align}
 \dt\rho &+ \div(\rho\vv) = 0, \label{eq:sys1: mass}\\
 \dt(\rho\vv) &+ \div(\rho\vv\otimes\vv) - \div \mathbf{S} + \nabla p + \phi\nabla\mu = \rho\mathbf{g},\label{eq:sys1: mom}\\
 \dt\phi &+ \div(\phi\vv) - \div(\mathbf{M}\nabla(\mu+\alpha p)) = 0, \label{eq:sys1: phase}\\
 \mu &+ \gamma\Delta\phi - f'(\phi) = 0, \label{eq:sys1: chem}
\end{align}
\end{subequations}
in domain $\Omega \subset \mathbb{R}^d$ with dimension $d=2,3$, boundary $\Gamma$ and unit outward normal $\mathbf{n}$. The unknowns are the velocity $\vv: \Omega \rightarrow \mathbf{R}^d$, phase field variable $\phi: \Omega \rightarrow \mathbf{R}$ and pressure $p: \Omega \rightarrow \mathbf{R}$ subject to the initial conditions $\vv(\mathbf{x},0) = \vv_0(\mathbf{x})$ and $\phi(\mathbf{x},0) = \phi_0(\mathbf{x})$. The phase field variable $\phi$ equals the volume fraction difference, i.e. $\phi=1$ represents the first constituent and $\phi=-1$ the second. The density and viscosity are parameterized via 
\begin{subequations}
  \begin{align}
  \rho \equiv\rho(\phi)=&~\rho_1\frac{1+\phi}{2}+\rho_2\frac{1-\phi}{2}, \label{eq: def rho}\\
  \eta \equiv\eta(\phi)=&~\eta_1\frac{1+\phi}{2}+\eta_2\frac{1-\phi}{2}  \label{eq: def eta}  
\end{align}
\end{subequations}
for given positive ($\geq 0$) constant constituent densities $\rho_1$ and $\rho_2$ and viscosities $\eta_1$ and $\eta_2$. The Cauchy stress is given by $\mathbf{S} = \eta (2 \nabla^s \vv + \lambda (\div \vv)\mathbf{I})$ where $\nabla^s \vv = (\nabla \vv + (\nabla \vv)^T)/2$ is the symmetric velocity gradient, and $\lambda = -2/d$. Furthermore, the gravitational force vector is $\mathbf{g} = -g \mathbf{j} = \nabla y$ with $g$ the gravitational constant and $\mathbf{j}$ the vertical unit vector. We introduce the constants $\arho = (\rho_1+\rho_2)/2$ and $\jrho=(\rho_1-\rho_2)/2$, $\alpha = -\jrho/\arho = (\rho_2-\rho_1)/(\rho_1+\rho_2)$. The quantity $\mu$ denotes the chemical potential, and $\mathbf{M}$ is the mobility tensor. The mobility tensor is symmetric, positive semi-definite, and is degenerate, i.e. it vanishes in the single fluid regime: $\mathbf{M} = 0\mathbf{I}$ when $\phi = \pm 1$. Equations \eqref{eq:sys1: mass} and \eqref{eq:sys1: mom} describe the mass and momentum balance of the mixture.
%Equation \eqref{eq: sys1: phase} is the evolution equation of the phase-field variable, and \eqref{eq:sys1: chem} is the definition of the chemical potential.

\begin{remark}[Mixture velocity]
    From the perspective of continuum mixture theory, the velocity $\vv$ in the system \eqref{eq:sys1} is the mass-averaged velocity. It is well-known that this velocity is not a divergence-free velocity in general, see e.g. \cite{eikelder2023unified}. 
\end{remark}

\begin{remark}[Mobility]
  In this paper we restrict to $\mathbf{M}=\mathbf{M}(\phi)$, however the analysis directly carries over to more general dependencies such as $\mathbf{M}=\mathbf{M}(\phi,\nabla \phi)$.
\end{remark}

The system has an additionally underlying structure of balance laws and dissipation identities. Namely under suitable boundary conditions the system conserves mass and dissipates energy via
\begin{align}
   \ddt \int_\Omega \phi(t) = 0,\qquad \ddt \mathcal{E}(\phi,\vv) = -\mathcal{D}_\phi(\mu+\alpha p,\vv ),
\end{align}
with the energy $\mathcal{E}(\phi,\vv)$ and dissipation rate $\mathcal{D}_\phi(\mu+\alpha p,\vv )$ given by
\begin{subequations}
\begin{align}
 \mathcal{E}(\phi, \vv):=&~\int_\Omega \left( \frac{\gamma}{2}\snorm{\nabla\phi}^2 + f(\phi)\right) + \frac{\rho(\phi)}{2}\snorm{\vv}^2 + \rho(\phi)y = \int_\Omega \Psi(\phi,\nabla\phi) + K(\phi,\vv) + G(\phi)  , \label{eq:defEnergy}\\
 \mathcal{D}_\phi(\mu+\alpha p,\vv ):=&~ \int_\Omega \nabla(\mu+\alpha p)\cdot\mathbf{M}\nabla(\mu+\alpha p) + \mathbf{S}:\nabla\vv,  \label{eq:defDissipation}
\end{align}
\end{subequations}
where we decomposed the energy density of $\mathcal{E}$ into the kinetic energy density $K$, the energy density due to gravity $G$ and the free energy density $\Psi$. Note that $y$ is the vertical coordinate, i.e. $\nabla y = \mathbf{j}.$

% $2 \nu \left( \nabla^s \vv - \frac{1}{d} ({\rm div} \mathbf{v}) \mathbf{I}\right):\left(\nabla^s \vv - \frac{1}{d} ({\rm div} \mathbf{v}) \mathbf{I}\right)+ \nu \left(\lambda + \frac{2}{d}\right)\left({\rm div} \mathbf{v}\right)^2$

\begin{remark}[Conservative form]
  In absence of gravity ($\mathbf{g}=0$), the model may be written in conservative form by the inserting the Korteweg tensor identity:
  \begin{align}
    \phi \nabla \mu =&~ \div \left( \gamma \nabla \phi \otimes \nabla \phi - \left(\mu \phi - f(\phi) - \frac{\gamma}{2}\snorm{\nabla\phi}^2\right)\mathbf{I}\right).
  \end{align}
\end{remark}
% \begin{remark}[Equilibrium conditions]
%     The equilibrium conditions are characterized by $\mathcal{D}_\phi(\mu+\alpha p,\vv )\equiv 0$. This provides for $\mathbf{v}=0$:
%     \begin{subequations}
%         \begin{align}
%             \nabla p + \phi \nabla \mu =&~ \rho \mathbf{g},\\
%             \mathbf{M} \nabla (\mu + \alpha p) =&~ 0.
%         \end{align}
%     \end{subequations}
%     For a detailed discussion we refer to \cite{ten2024divergence}.
% \end{remark}

% In the following we collect the relevant assumptions for our analysis.
% \begin{itemize}
%     \item[(A0)] the domain $\Omega \subset \RR^d$, $d=1,2,3$. 
%     \item[(A1)] the interface parameter $\gamma$ and constituent densities $\rho_1,\rho_2$ are positive constants;
%     \item[(A2)] The mobility matrix $\mathbf{M}(\phi) \in\mathbb{R}^{(d+1) \times (d+1)}$ is symmetric and positive (semi)-definite.
%     \item[(A3)] $f\in C^2(\mathbb{R})$ such that $f(s) \ge 0$ and $f''(s) \geq -f_1$, for some $f_1\geq0$. 
%     \item[(A4)] The viscous stress tensor is given by $\mathbf{S}(\phi,\nabla\vv):= \eta(\phi)\Dv - 2\tfrac{\eta(\phi)}{d}\div(\vv)\mathbf{I}$ 
%     \item[(A5)] We define extensions $\widetilde \rho(\phi), \widetilde\eta(\phi)$ by extending $\rho(\phi),\eta(\phi)$ by $\rho_1,\eta_1$ for $\phi < -1$ and $\rho_2,\eta_2$ for $\phi > 1$.
% \end{itemize}

\subsection{Equivalent alternative formulation}\label{subsec:reform}

The evolution of the energy of the system \eqref{eq:sys1} results from a linear combination of the equations \eqref{eq:sys1: mass}-\eqref{eq:sys1: chem} with the weights $- |\vv|^2/2 + 2p/(\rho_1+\rho_2)$, $\vv$, $\mu+\alpha p$ and $-\partial_t \phi$, respectively. Using standard velocity-pressure function spaces, the first weight is not an element of such a function space. To circumvent this issue, we introduce an alternative -- but equivalent -- formulation for which the energy evolution follows from standard weighting function spaces.

First, we replace the mass balance law \eqref{eq:sys1: mass} by the balance law of $\div \vv$. The expression of  $\div \vv$ follows from a linear combination of the mass balance law \eqref{eq:sys1: mass} and the phase-field evolution equation \eqref{eq:sys1: phase}:
\begin{align}\label{eq:ID: mass}
    0=&~\frac{2}{\rho_1+\rho_2}\left(\dt\rho+ \div(\rho\vv) \right)
    +\alpha\left(\dt\phi + \div(\phi\vv) - \div(\mathbf{M}\nabla(\mu+\alpha p))\right)\nn\\
    =&~ \div \vv - \alpha \div \left(\mathbf{M}\nabla(\mu+\alpha p)\right),
\end{align}
where we have used the identities:
\begin{subequations}
    \begin{align}
        \frac{2}{\rho_1+\rho_2}\dt\rho + \alpha \dt\phi  =&~0, \\
        \frac{2}{\rho_1+\rho_2}\div(\rho\vv) + \alpha \div(\phi\vv)  =&~0,      
    \end{align}
\end{subequations}
which follow from \eqref{eq: def rho}.

Second, we use the mass balance \eqref{eq:sys1: mass} to rewrite the momentum balance law \eqref{eq:sys1: mom}:
\begin{align}\label{eq:ID: mom}
    0=&~\dt(\rho\vv) + \div(\rho\vv\otimes\vv) - \div \mathbf{S} + \nabla p + \phi\nabla\mu - \rho\mathbf{g}
    - \frac{1}{2}\vv\left(\dt\rho + \div(\rho\vv)\right)\nn\\
    =&~ \frac{\vv}{2}\dt\rho + \rho\dt\vv + \frac{1}{2} \vv \div(\rho \vv) + \rho \vv\cdot \nabla \vv - \div \mathbf{S} + \nabla p + \phi\nabla\mu - \rho\mathbf{g},
\end{align}
where we have used the identities:
\begin{subequations}
    \begin{align}
        \dt(\rho\vv) - \frac{1}{2}\vv\dt\rho =&~\frac{\vv}{2}\dt\rho + \rho\dt\vv, \\
        \div(\rho\vv\otimes\vv) - \frac{1}{2}\vv \div(\rho\vv) =&~\frac{1}{2} \vv \div(\rho \vv) + \rho \vv\cdot \nabla \vv.      
    \end{align}
\end{subequations}

Utilizing \eqref{eq:ID: mass} and \eqref{eq:ID: mom} we recast the system \eqref{eq:sys1} into the equivalent strong form:
\begin{subequations}\label{eq:sys2}
  \begin{align}
 \div \vv  &- \alpha \div \left(\mathbf{M}\nabla(\mu+\alpha p)\right) = 0, \label{eq:sys2: div}\\
 \frac{\vv}{2}\dt\rho &+ \rho\dt\vv + \frac{1}{2} \vv \div(\rho \vv) + \rho \vv\cdot \nabla \vv - \div \mathbf{S} + \nabla p + \phi\nabla\mu = \rho\mathbf{g},\label{eq:sys2: mom}\\
 \dt\phi &+ \div(\phi\vv) - \div(\mathbf{M}\nabla(\mu+\alpha p)) = 0, \label{eq:sys2: phase}\\
 \mu &+ \gamma\Delta\phi - f'(\phi) = 0.\label{eq:sys2: chem}
\end{align}
\end{subequations}
The energy evolution of this formulation now follows from using standard weights.
\begin{lemma}[Energy evolution]
The energy evolution of \eqref{eq:sys2} follows from a linear combination of \eqref{eq:sys2: div}-\eqref{eq:sys2: chem} with the weights: $p+g y\arho$, $\vv$, $\mu+ gy\jrho$, $-\partial_t \phi$.
\end{lemma}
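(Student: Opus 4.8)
The plan is to test the four equations of \eqref{eq:sys2} against the proposed weights, add the resulting identities, and show that the sum collapses to $\ddt \mathcal{E}(\phi,\vv) = -\mathcal{D}_\phi(\mu+\alpha p,\vv)$, with $\mathcal{E}$ and $\mathcal{D}_\phi$ as in \eqref{eq:defEnergy}--\eqref{eq:defDissipation}. Concretely, I would multiply \eqref{eq:sys2: div} by $p + g y \arho$, take the dot product of \eqref{eq:sys2: mom} with $\vv$, multiply \eqref{eq:sys2: phase} by $\mu + g y \jrho$, and multiply \eqref{eq:sys2: chem} by $-\partial_t\phi$, then integrate each over $\Omega$ and sum. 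Throughout I would assume the boundary terms from integration by parts vanish under the (as yet unstated) boundary conditions — no-slip or no-penetration for $\vv$, and homogeneous Neumann-type conditions for $\phi$, $\mu$ and the mobility flux.

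The key steps, in order: (i) From the $\vv$-weighted momentum equation, use the algebraic identity $\tfrac{\vv}{2}\partial_t\rho \cdot \vv + \rho \partial_t\vv\cdot\vv = \partial_t\!\left(\tfrac{\rho}{2}|\vv|^2\right)$ to recover the time derivative of the kinetic energy density $K(\phi,\vv)$, and similarly $\tfrac12 \vv\div(\rho\vv)\cdot\vv + \rho(\vv\cdot\nabla\vv)\cdot\vv = \div\!\left(\tfrac{\rho}{2}|\vv|^2\vv\right)$, which integrates to zero; the viscous term yields $+\int_\Omega \mathbf{S}:\nabla\vv$; the term $\int_\Omega \vv\cdot\nabla p = -\int_\Omega p\,\div\vv$; and $\int_\Omega \phi\nabla\mu\cdot\vv$ and the gravity term $-\int_\Omega \rho\mathbf{g}\cdot\vv = \int_\Omega \rho\,\vv\cdot\nabla y$ are carried forward. (ii) From the $(\mu+gy\jrho)$-weighted phase equation, $\int_\Omega (\mu+gy\jrho)\partial_t\phi$ combines with the chemical-potential equation to produce $\ddt\int_\Omega \Psi(\phi,\nabla\phi)$ plus a gravitational contribution; the transport term gives $\int_\Omega (\mu+gy\jrho)\div(\phi\vv)$; and the mobility term produces $+\int_\Omega \nabla(\mu+gy\jrho)\cdot\mathbf{M}\nabla(\mu+\alpha p)$ after integration by parts. (iii) From the $(p+gy\arho)$-weighted divergence constraint, $\int_\Omega (p+gy\arho)\div\vv - \alpha\int_\Omega (p+gy\arho)\div(\mathbf{M}\nabla(\mu+\alpha p)) = 0$; the first term cancels $-\int_\Omega p\,\div\vv$ from step (i) up to the $gy\arho\,\div\vv$ piece. (iv) The $-\partial_t\phi$-weighted chemical potential equation, after the usual $\int_\Omega \gamma\Delta\phi\,\partial_t\phi = -\ddt\int_\Omega \tfrac{\gamma}{2}|\nabla\phi|^2$ and $\int_\Omega f'(\phi)\partial_t\phi = \ddt\int_\Omega f(\phi)$, supplies exactly the $\Psi$-time-derivative needed to match step (ii).

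The decisive bookkeeping — and the place I expect the main obstacle — is verifying that \emph{all} the pressure-, gravity-, and coupling-related terms cancel in precisely the right way. The reason the weights are $p+gy\arho$, $\mu+gy\jrho$, rather than simply $p$, $\mu$, $\mu+\alpha p$, is that the gravitational energy density $G(\phi)=\rho(\phi)y$ depends on $\phi$ through $\rho$, so $\ddt\int_\Omega \rho y$ generates a term $\int_\Omega y\,\partial_t\rho = -\tfrac{\rho_1+\rho_2}{2}\alpha\int_\Omega y\,\partial_t\phi$ that must be absorbed; the shifts $gy\arho$ in the pressure weight and $gy\jrho$ in the chemical-potential weight are engineered so that, using $\alpha = -\jrho/\arho$ and the linearity \eqref{eq: def rho} of $\rho$ in $\phi$, the stray $\int_\Omega y\,(\cdot)$ terms from the divergence constraint, the $\phi$-transport term, the gravity body force $\int_\Omega \rho\,\vv\cdot\nabla y$, and the time derivative of $G$ all telescope. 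I would therefore isolate every term containing the factor $y$ (or $\nabla y = \mathbf{j}$), substitute $\rho = \arho + \jrho\,\phi$ and $\alpha\arho = -\jrho$, and check the cancellation explicitly; once the gravitational terms are shown to vanish, what remains is the standard matching-density-style energy identity, and the mobility and viscous terms assemble into $-\mathcal{D}_\phi(\mu+\alpha p,\vv)$. A secondary check is that the cross term $\int_\Omega \phi\nabla\mu\cdot\vv$ from the momentum equation cancels against $\int_\Omega \mu\,\div(\phi\vv)$ from the phase equation via $\int_\Omega \phi\nabla\mu\cdot\vv + \int_\Omega \mu\,\div(\phi\vv) = \int_\Omega \div(\mu\phi\vv) = 0$.
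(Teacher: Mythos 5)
Your proposal is correct, and the core computation --- testing the four equations of \eqref{eq:sys2} with the weights $p+g y\arho$, $\vv$, $\mu+g y\jrho$, $-\partial_t\phi$, recovering $\partial_t K$ and $\partial_t(\Psi+G)$, and verifying the cancellation of the pressure, gravity and $\phi\nabla\mu$ / $\mu\,\div(\phi\vv)$ cross terms via $\rho=\arho+\jrho\,\phi$ and $\alpha\arho=-\jrho$ --- is exactly the one the paper performs. The difference is in the bookkeeping: you integrate each weighted equation over $\Omega$ from the start and assume (as yet unstated) boundary conditions so that every boundary term from integration by parts vanishes, whereas the paper's proof of this lemma stays pointwise: it assembles the weighted equations into a local conservative form $\partial_t(K+\Psi+G)+\div(\cdots)+\mathbf{S}:\nabla\vv+\nabla(\mu+\alpha p)\cdot\mathbf{M}\nabla(\mu+\alpha p)=0$, with the flux organised through the Korteweg tensor $\mathbf{K}=(\mu\phi-\Psi)\mathbf{I}+\gamma\nabla\phi\otimes\nabla\phi$, and only integrates at the end, retaining an explicit boundary contribution $\mathcal{B}$ (the admissible boundary conditions are introduced only after the lemma). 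Your route is essentially the argument the paper reserves for the variational-formulation result, Lemma \ref{lem:variational_TC}: it is shorter and anticipates the weak and fully discrete stability proofs, at the price of invoking boundary conditions the lemma does not state and of losing the local flux structure that the pointwise assembly makes explicit. The only slip is cosmetic and shared with the paper's own notation: the factors of $g$ in the gravity terms should be tracked consistently (e.g. $-\rho\,\mathbf{g}\cdot\vv=g\rho\,\vv\cdot\mathbf{j}$ with $\nabla y=\mathbf{j}$), matching whichever convention is used for $G$ in the energy.
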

\begin{proof}
  Multiplying \eqref{eq:sys2: div} with $p+g y\arho$ provides:
  \begin{align}\label{eq:proof1: div}
 0 = (p+g y\arho)\div \vv  - \alpha (p+g y\arho) \div \left(\mathbf{M}\nabla(\mu+\alpha p)\right).
 \end{align}
 Next, taking the inner product of the momentum equation \eqref{eq:sys2: mom} with $\vv$ yields:
  \begin{align}\label{eq:proof1: mom}
    0 &= \partial_t K + \div (K \vv) - \vv \cdot \div \mathbf{S} + \vv\cdot \nabla p + \phi\vv\cdot \nabla\mu - \rho\vv\cdot\mathbf{g},
  \end{align}
  where we have used the identities:
  \begin{subequations}
    \begin{align}
      \partial_t K  =&~ \vv\cdot \left( \frac{\vv}{2}\dt\rho + \rho\dt\vv \right)\\      
      \div (K \vv) =&~ \vv\cdot \left(\frac{1}{2} \vv \div(\rho \vv) + \rho \vv\cdot \nabla \vv \right)
    \end{align}
  \end{subequations}
  Finally, multiplying \eqref{eq:sys2: phase} with $\mu+gy\jrho$ and $-\partial_t \phi$, and subsequently adding the results provides:
  \begin{align}\label{eq:proof1: phase+chem}
      0 = &~\partial_t G + (\mu+gy \jrho) \div(\phi\vv) - (\mu+ gy \jrho) \div(\mathbf{M}\nabla(\mu+\alpha p)) \nn\\
  &~+ \partial_t \Psi - \div \left( \gamma \nabla \phi \cdot \dot{\phi} \right)+ \div \left( (\gamma \nabla \phi \otimes \nabla \phi)\vv \right),
  \end{align}
  where we have used the identities:
  \begin{subequations}
      \begin{align}
      \partial_t G = &~ gy \jrho \partial_t \phi \\
      \partial_t \Psi - \div \left( \gamma \nabla \phi \cdot \dot{\phi} \right)+ \div \left( (\gamma \nabla \phi \otimes \nabla \phi)\vv \right) =&~ - \partial_t \phi \gamma \Delta \phi + \partial_t \phi f'(\phi).
  \end{align}
  \end{subequations}
Adding \eqref{eq:proof1: div}, \eqref{eq:proof1: mom} and \eqref{eq:proof1: phase+chem} gives:
 \begin{align}\label{eq:proof1: total}
 0 =&~ \partial_t (K + \Psi + G) + \div ((K+\Psi + G) \vv) - \div (\mathbf{S}\vv) + \div (p \vv)   - \div \left((\mu+\alpha p) \mathbf{M}\nabla(\mu+\alpha p)\right)   \nn\\
 &~  + \div\left(\mathbf{K}\vv\right) - \div \left( \gamma \nabla \phi \cdot \dot{\phi} \right)   +\mathbf{S}:\nabla \vv + \nabla(\mu+\alpha p) \cdot\left(\mathbf{M}\nabla(\mu+\alpha p)\right),
 \end{align}
 with $\mathbf{K}$ the Korteweg tensor given by:
 \begin{align}
     \mathbf{K} = (\mu\phi-\Psi)\mathbf{I}+\gamma \nabla \phi \otimes \nabla \phi,
 \end{align}
 and the $\dot{\phi} = \partial_t \phi + \vv \cdot \nabla \phi$ denoting the convective derivative.
 Integration over $\Omega$ provides:
 \begin{align}
   \dfrac{{\rm d}}{{\rm d}t}\mathcal{E}(\phi,\vv) = -\mathcal{D}_\phi(\mu+\alpha p,\vv ) + \mathcal{B},
 \end{align}
 where $\mathcal{B}$ is the boundary contribution:
 % \begin{align}
 %   \mathcal{B} = - \la (K+\Psi) \vv, \mathbf{n} \ra_{\Gamma} + \la (\mathbf{S}-p \mathbf{I}) \vv, \mathbf{n} \ra_{\Gamma} - \la (\mu + \alpha p) \mathbf{M}\nabla (\mu + \alpha p), \mathbf{n} \ra_{\Gamma} - \la \mathbf{K} \vv, \mathbf{n} \ra_{\Gamma} + \la \gamma \dot{\phi} \nabla \phi , \mathbf{n} \ra_{\Gamma} 
 % \end{align}
 \begin{align}
   \mathcal{B} = - \int_{\partial\Omega} (K+\Psi) \vv\cdot \mathbf{n} - (\mathbf{S}-p \mathbf{I}) \vv\cdot \mathbf{n} + (\mu + \alpha p) \mathbf{M}\nabla (\mu + \alpha p)\cdot \mathbf{n} + \mathbf{K} \vv\cdot\mathbf{n}  -\gamma \dot{\phi} \nabla \phi \cdot \mathbf{n}. 
 \end{align}
\end{proof}

In the following we will only consider the following sets of boundary conditions
\begin{itemize}
    \item $\Omega$ is a hypercube and identified with the d-dimensional torus, i.e. we impose periodic boundary conditions.
    \item For the phase-field $\nabla\phi\cdot\mathbf{n}\vert_{\partial\Omega}=\mathbf{M}\nabla(\mu+\alpha p) \cdot\mathbf{n}\vert_{\partial\Omega}=0.$ For the velocity we decompose $\partial\Omega=\partial\Omega_1\cup \partial\Omega_2$ such that $\vv\vert_{\partial\Omega_1}=\mathbf{0}$ and  $\vv\cdot\mathbf{n}\vert_{\partial\Omega_2}=\mathbf{0}.$
\end{itemize}

To derive the weak formulation we introduce the notation $\la a,b\ra := \int_\Omega ab$, for arbitrary functions $a,b:\Omega \rightarrow \mathbb{R}^k$, $k\in\mathbb{N}_+$. Guided by the alternative strong form of the momentum equation \eqref{eq:ID: mom}, we utilize the skew-symmetric form of the convection term in the weak formulation of the momentum equation:
\begin{align}
   \mathbf{c}_{skw}(\u,\vv,\w) := &~ \frac{1}{2}\la (\u\cdot\nabla)\vv,\w \ra  - \frac{1}{2}\la (\u\cdot\nabla)\w,\vv \ra,
\end{align}
where we note the identity:
\begin{align}
    \la \w ,\frac{1}{2} \vv \div(\rho \vv) + \rho \vv\cdot \nabla \vv \ra = \mathbf{c}_{skw}(\rho\vv,\vv,\w).% + \frac{1}{2}\la \rho \vv \otimes \vv , \w \otimes \mathbf{n} \ra_\Gamma.
\end{align}

With this we can recast the system into a variational formulation.
% \begin{align}
%   \la \dt\phi,\psi \ra &- \la \phi\vv, \nabla\psi\ra + \la \mathbf{M}(\phi)\nabla(\mu+\alpha p),\nabla\psi \ra + \zeta \la m(\phi)(\mu+\alpha p),\psi\ra= 0, \\
%   \la \mu,\xi \ra &- \gamma\la \nabla\phi,\nabla\xi \ra - \la f'(\phi),\xi \ra = 0,\\
%   \la \frac{\vv}{2}\dt\rho &+ \rho\dt\vv,\w \ra +  \mathbf{c}_{skw}(\rho\vv,\vv,\w) + \la \mathbf{S}(\phi,\nabla\vv),\nabla\w \ra \\
%   &- \la p,\div(\w)\ra + \la \phi\nabla\mu,\w \ra  = 0, \\
%   \la \div(\vv),q \ra &+ \alpha\la \mathbf{M}(\phi)\nabla(\mu+\alpha p),\nabla q \ra +\alpha\zeta \la m(\phi)(\mu+\alpha p),q\ra = 0
% \end{align}

\begin{lemma}[Energy-stable variation formulation]\label{lem:variational_TC}
Every smooth solution satisfies the variational formulation
\begin{align}
  \la \dt\phi,\psi \ra &- \la \phi\vv, \nabla\psi\ra + \la \mathbf{M}(\phi)\nabla(\mu+\alpha p),\nabla\psi \ra = 0, \\
  \la \mu,\xi \ra &- \gamma\la \nabla\phi,\nabla\xi \ra - \la f'(\phi),\xi \ra = 0,\\
  \la \frac{\vv}{2}\dt\rho &+ \rho\dt\vv,\w \ra +  \mathbf{c}_{skw}(\rho\vv,\vv,\w) + \la \mathbf{S}(\phi,\nabla\vv),\nabla\w \ra \\
  &- \la p,\div(\w)\ra + \la \phi\nabla\mu,\w \ra + \la \rho(\phi)\mathbf{j},\w \ra = 0, \\
  \la \div(\vv),q \ra &+ \alpha\la \mathbf{M}(\phi)\nabla(\mu+\alpha p),\nabla q \ra  = 0
\end{align}
for smooth test functions $(\psi,\xi,\w,q)$ with mean-free $q$. Furthermore, conservation of mass and total density and energy dissipation holds
    \begin{align}
          \ddt \la \phi(t), 1\ra = 0,\qquad \ddt \la \rho(\phi(t)), 1\ra = 0,\qquad \ddt \mathcal{E}(\phi(t),\vv(t)) = - \mathcal{D}_{\phi(t)}(\mu(t)+\alpha p(t),\vv(t)). \label{eq:massdissipation_cont}
    \end{align}
\end{lemma}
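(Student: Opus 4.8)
The statement has three pieces: (i) a smooth solution of \eqref{eq:sys2} satisfies the four variational identities; (ii) $\la\phi,1\ra$ and $\la\rho(\phi),1\ra$ are conserved; (iii) the energy dissipation identity. For (i) I would take a smooth solution of \eqref{eq:sys2} and test \eqref{eq:sys2: phase} with $\psi$, \eqref{eq:sys2: chem} with $\xi$, \eqref{eq:sys2: mom} with $\w$, and \eqref{eq:sys2: div} with a mean-free $q$, integrate over $\Omega$, and integrate by parts in the terms $\div(\phi\vv)$, $\div(\mathbf{M}\na(\mu+\alpha p))$, $\gamma\Delta\phi$, $\div\mathbf{S}$ and $\na p$. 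Every resulting surface integral vanishes for the two admissible boundary-condition sets: on the torus by periodicity, and otherwise because $\vv\cdot\mathbf{n}=0$ on $\partial\Omega$ (with $\w$ in the matching space, and the tangential traction understood to vanish on $\partial\Omega_2$) removes the convective, pressure and viscous surface terms, $\na\phi\cdot\mathbf{n}=0$ removes the Cahn--Hilliard surface term, and $\mathbf{M}\na(\mu+\alpha p)\cdot\mathbf{n}=0$ removes the mobility surface terms. For the inertial/convective block of \eqref{eq:sys2: mom} one uses the algebraic rewriting behind \eqref{eq:ID: mom} together with the stated identity $\la\w,\tfrac12\vv\div(\rho\vv)+\rho\vv\cdot\na\vv\ra=\mathbf{c}_{skw}(\rho\vv,\vv,\w)$, whose own boundary remainder $\tfrac12\int_{\partial\Omega}\rho(\vv\cdot\mathbf{n})(\vv\cdot\w)$ vanishes. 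This reproduces the four equations exactly.

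For (ii), choosing $\psi\equiv 1$ in the phase equation kills the $\la\phi\vv,\na\psi\ra$ and $\la\mathbf{M}\na(\mu+\alpha p),\na\psi\ra$ terms, giving $\ddt\la\phi,1\ra=0$; since $\rho(\phi)=\arho+\jrho\phi$ is affine, $\la\rho(\phi),1\ra=\arho|\Omega|+\jrho\la\phi,1\ra$ is conserved as well.

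For (iii) there are two natural routes. The slick one: for a smooth solution the variational formulation is equivalent to \eqref{eq:sys2}, so the energy-evolution identity of the preceding lemma applies, $\ddt\mathcal{E}(\phi,\vv)=-\mathcal{D}_\phi(\mu+\alpha p,\vv)+\mathcal{B}$, and one checks term by term that the boundary functional $\mathcal{B}$ vanishes under the imposed boundary conditions (the $(K+\Psi)\vv\cdot\mathbf{n}$, $(\mathbf{S}-p\mathbf{I})\vv\cdot\mathbf{n}$ and Korteweg terms by $\vv\cdot\mathbf{n}=0$ together with $\na\phi\cdot\mathbf{n}=0$, the mobility and $\gamma\dot\phi\na\phi\cdot\mathbf{n}$ terms by the homogeneous flux conditions). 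The self-contained alternative works directly in the variational formulation: test the momentum equation with $\w=\vv$ (so $\mathbf{c}_{skw}(\rho\vv,\vv,\vv)=0$ and $\vv\cdot(\tfrac{\vv}{2}\partial_t\rho+\rho\partial_t\vv)=\partial_t K$), the chemical-potential equation with $\xi=\partial_t\phi$ (yielding $\ddt\int\Psi=\la\mu,\partial_t\phi\ra$ via \eqref{eq:sys2: chem}), the phase equation with $\psi=\mu+\alpha p$, and the divergence equation with $q=p$. Summing, the $\pm\la\phi\vv,\na\mu\ra$ terms cancel, and the remaining pressure contributions collapse once one also tests the phase equation with $\psi=p$ and uses $\la p,\div\vv\ra=-\alpha\la\mathbf{M}\na(\mu+\alpha p),\na p\ra$ from the divergence equation, leaving $\ddt\int(K+\Psi)=-\mathcal{D}_\phi(\mu+\alpha p,\vv)-\la\rho(\phi)\mathbf{j},\vv\ra$. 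Finally one identifies $\ddt\int G(\phi)=\ddt\int\rho(\phi)y=\jrho\ddt\la\phi,y\ra$ with $\la\rho(\phi)\mathbf{j},\vv\ra$ by testing the phase equation with $\psi=y$ and the divergence equation with $q=y$ (admissible, since the divergence equation in fact holds for every $q$: an additive constant contributes $c\int\vv\cdot\mathbf{n}=0$) and using $\alpha\arho=-\jrho$. Adding the three contributions gives the asserted identity.

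The main obstacle is this pressure--gravity bookkeeping in route two: in the alternative system \eqref{eq:sys2} neither the pressure nor the body force appears in its "clean" form, so the cancellation of all $p$-dependent terms in the energy balance genuinely relies on testing the divergence constraint \eqref{eq:sys2: div} against $p$ and against $y$, and one must check that the mobility cross-terms $\alpha\la\mathbf{M}\na(\mu+\alpha p),\na p\ra$ produced by the phase and divergence equations cancel exactly and that these are legitimate test functions. If one prefers the slick route, the only delicate point is the term-by-term verification that $\mathcal{B}=0$, notably the tangential-traction contribution on the slip portion of the boundary.
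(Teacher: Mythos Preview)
Your proposal is correct and your self-contained route is essentially the paper's approach. The paper packages the test functions more compactly: it chooses $\psi=\mu+gy\jrho$, $\xi=-\partial_t\phi$, $\w=\vv$, and $q=p+g(y-\bar y)\arho$ (the mean-free shift of $p+gy\arho$) in one shot, so the gravity bookkeeping is absorbed into the same four insertions rather than handled by separate auxiliary tests with $\psi=y$ and $q=y$. By linearity the two decompositions coincide, and your observation that the divergence identity extends to non-mean-free $q$ via $\int_{\partial\Omega}\vv\cdot\mathbf n=0$ is exactly what makes the paper's combined choice admissible.

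One small streamlining: your intermediate choice $\psi=\mu+\alpha p$ creates extra pressure terms $\alpha\la\partial_t\phi,p\ra-\alpha\la\phi\vv,\nabla p\ra$ that you then have to cancel by invoking the phase equation again with $\psi=p$; this is redundant. Testing the phase equation simply with $\psi=\mu$ and the divergence equation with $q=p$ already yields $-\la p,\div\vv\ra+\la\mathbf M\nabla(\mu+\alpha p),\nabla\mu\ra=-\la\mathbf M\nabla(\mu+\alpha p),\nabla(\mu+\alpha p)\ra$ directly, with no leftover $p$-terms. The paper avoids this detour by never putting $p$ into $\psi$. Your ``slick route'' via the preceding energy-evolution lemma and a term-by-term check that $\mathcal B=0$ is also valid and indeed shorter, but the paper does not take it; it redoes the computation at the variational level precisely because that version transfers verbatim to the fully discrete proof.
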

\begin{proof}
    Conservation of mass follows from $\psi=1$ using that $\nabla 1=0$.  Next we focus on the energy dissipation relation and denote the mean-value of $y$ by $\bar y:=\la y,1 \ra$. Taking $q=p+g (y-\bar y)\arho$ provides:
  \begin{subequations}\label{eq:proof2: div}
  \begin{align}
 0=&~\la \div(\vv),p \ra + \la \mathbf{M}(\phi)\nabla(\mu+\alpha p),\nabla (\alpha p) \ra \nn\\
 &~+ g \arho \la \div(\vv),y-\bar y \ra -  g \jrho \la \mathbf{M}(\phi)\nabla(\mu+\alpha p),\mathbf{j} \ra \\
 =&~\la \div(\vv),p \ra + \la \mathbf{M}(\phi)\nabla(\mu+\alpha p),\nabla (\alpha p) \ra \nn\\
 &~+ g \arho \la \div(\vv),y\ra -  g \jrho \la \mathbf{M}(\phi)\nabla(\mu+\alpha p),\mathbf{j} \ra,
 \end{align}
 \end{subequations}
 where we used $\la \div(\vv),\bar y \ra=0$ and $\nabla \bar y =0.$
    Taking $\w=\vv$ provides:  
    \begin{align}\label{eq:proof2: mom}
    0 =&~ \dfrac{{\rm d}}{{\rm d}t} \la K (\phi,\vv),1\ra + \la \mathbf{S}(\phi,\nabla\vv),\nabla\vv \ra - \la p,\div(\vv)\ra + \la \phi\nabla\mu,\vv \ra + \la \rho(\phi)\mathbf{j}, \vv\ra,
  \end{align}
  where we have used the identities:
  \begin{subequations}
    \begin{align}
    \la \tfrac{1}{2}\vv\partial_t\rho + \rho\partial_t\vv,\vv \ra  =&~\dfrac{{\rm d}}{{\rm d}t} \la K (\phi,\vv),1\ra \\
    \mathbf{c}_{skw}(\rho\vv,\vv,\vv) = &~0.
    \end{align}
  \end{subequations}
      Finally, taking $\psi = \mu+gy\jrho $ and $\xi=-\partial_t\phi $, and subsequently adding the results provides:
\begin{subequations}\label{eq:proof2: phase}
    \begin{align}
     0=&~ \dfrac{{\rm d}}{{\rm d}t}\la\Psi(\phi),1\ra - \la \phi\vv, \nabla\mu \ra+ \la \mathbf{M}(\phi)\nabla(\mu+\alpha p),\nabla\mu  \ra,\nn\\
      &+\dfrac{{\rm d}}{{\rm d}t} \la G(\phi),1\ra -g\jrho \la \phi\vv, \mathbf{j}\ra + g \jrho\la \mathbf{M}(\phi)\nabla(\mu+\alpha p),\mathbf{j} \ra,
    \end{align}
\end{subequations}
  where we have used the identities:
  \begin{subequations}
      \begin{align}
         \gamma\la \nabla\phi,\nabla \partial_t\phi \ra + \la \partial_t f(\phi),1 \ra  =&~ \dfrac{{\rm d}}{{\rm d}t} \la\Psi(\phi),1\ra ,\\
       \la \partial_t \phi,gy\jrho  \ra = &~ \dfrac{{\rm d}}{{\rm d}t} \la G(\phi),1\ra,
  \end{align}
  \end{subequations}
 Addition of \eqref{eq:proof2: div}, \eqref{eq:proof2: mom} and \eqref{eq:proof2: phase} provides:
  \begin{align}
      \dfrac{{\rm d}}{{\rm d}t} \mathcal{E}(\phi,\vv) =&~  - \la \mathbf{S}(\phi,\nabla\vv),\nabla\vv \ra - \la \mathbf{M}(\phi)\nabla(\mu+\alpha p),\nabla(\mu+\alpha p)  \ra \nn\\
 = &~ - \mathcal{D}_{\phi}(\mu+\alpha p,\vv),
  \end{align}
  where we have used the identity:
\begin{align}
    -g\jrho \la \phi\vv, \mathbf{j}\ra+ g \arho \la \div(\vv),y \ra   + \la \rho(\phi)g\mathbf{j}, \vv\ra = 0,%\la \arho g y \vv, \mathbf{n} \ra_\Gamma = 0,
\end{align}
  where the last identity follows from $\vv\cdot\mathbf{n}\vert_{\partial\Omega}=0$.
\end{proof}

\section{Numerical scheme \& structural properties}\label{sec:scheme}
In this section, we propose a fully discrete finite element method and prove our main result on the structure-preservation properties. First, in \cref{subsec: time discr} we present the time integration, and subsequently, in \cref{subsec: space discr}, we provide the spatial discretization to establish the fully-discrete methodology.

\subsection{Time discretization}\label{subsec: time discr}
Let us introduce the relevant notation and assumptions for our time discretization strategy. We partition the time interval $[0,T]$ uniformly with a time parameter $\tau>0$ and introduce $\Itau:=\{0=t^0,t^1=\tau,\ldots, t^{n_T}=T\}$, where $n_T=\tfrac{T}{\tau}$ is the absolute number of time steps. We denote by $\Pi^1_c(\Itau), \Pi^0(\Itau)$ the spaces of continuous piece-wise linear and piecewise constant functions on $\Itau$. We introduce $c^{n+1}:=c(t^{n+1})$ and $c^n:=c(t^n)$ . We introduce $c^*:=c(t^*)$ as a placeholder, which allows for every reasonable approximation in time. Typical options are $c^*\in\{c^n,c^{n+1}\}.$ Finally, we introduce the time difference and the discrete time derivative via
\begin{equation*}
	d^{n+1}c = c^{n+1} - c^n, \qquad d^{n+1}_\tau c = \frac{c^{n+1}-c^n}{\tau}.
\end{equation*}
To treat the convex and concave nature of the potential $f(\phi)$ we use time-averages in the spirit of \cite{Brunk2023}, i.e.
\begin{align}
 f'(\phi^{n+1},\phi^n) := \frac{1}{\tau}\int_{t^{n}}^{t^{n+1}} f'(\phi(s)) ds, \qquad \phi(s) = \frac{\phi^{n+1}-\phi^n}{\tau}(s-t^{n+1}) + \phi^{n+1}.  \label{eq:timeavg} 
\end{align}
We define extensions $\widetilde \rho(\phi), \widetilde\eta(\phi)$ by extending $\rho(\phi),\eta(\phi)$ by $\rho_1,\eta_1$ for $\phi < -1$ and $\rho_2,\eta_2$ for $\phi > 1$. We emphases that also other positive extensions can be used.

First we propose a time-integration scheme:
\begin{subequations}\label{eq:sys time}
  \begin{align}
 \div (\vv^{n+1})  &- \alpha \div \left(\mathbf{M}(\phi^*)\nabla(\mu^{n+1}+\alpha p^{n+1})\right) = 0, \label{eq:sys time: div}\\
 \frac{\vv^{n+1}}{2}d^{n+1}_\tau \widetilde{\rho} &+ \widetilde{\rho} d^{n+1}_\tau\vv + \frac{1}{2} \vv^{n+1} \div(\rho^* \vv^*) + \rho^* \vv^*\cdot \nabla \vv^{n+1}\nn\\
 &- \div \mathbf{S}(\phi^*,\nabla \vv^{n+1}) + \nabla p^{n+1} + \phi^*\nabla\mu^{n+1} = \rho(\phi^*)\mathbf{g},\label{eq:sys time: mom}\\
 d^{n+1}_\tau\phi &+ \div(\phi^*\vv^{n+1}) - \div(\mathbf{M}(\phi^*)\nabla(\mu^{n+1}+\alpha p^{n+1})) = 0, \label{eq:sys time: phase}\\
 \mu^{n+1} &+ \gamma\Delta\phi^{n+1} - f'(\phi^{n+1},\phi^n) = 0.\label{eq:sys time: chem}
\end{align}
\end{subequations}

Note that one can show that solutions of \eqref{eq:sys time} conserve of mass, total densities and satisfy the energy dissipation. The proof follows the same lines for Lemma \ref{lem:variational_TC} and is postponed to the proof for the fully discrete scheme, i.e. Theorem \ref{thm:discreten_stable} which we will consider below.

\subsection{Fully-discrete methodology}\label{subsec: space discr}

For the spatial discretisation we require that $\Th$ is a geometrically conforming partition of $\Omega$ into simplices where $h$ is the maximal diameter of the triangles in $\Th$. 
%
% This assumption could again be relaxed to allow for adaptive mesh refinement.
%
The space of continuous and piecewise linear and quadratic functions over $\Th$ as well as the mean free is introduced via
\begin{subequations}\label{eq:defFespace}
\begin{align}
	\Vh &:= \{v \in H^1(\Omega)\cap C^0(\bar\Omega) : v|_K \in P_1(K) \quad \forall K \in \Th\},\\
	\Xh &:= \{\vv \in H^1(\Omega)^d\cap C^0(\bar\Omega)^d : \vv|_K \in P_2^d(K) \quad \forall K \in \Th\}\cap\{\vv\vert_{\partial\Omega_1}=0, \vv\cdot \mathbf{n}\vert_{\partial\Omega_2}=0  \},\\
	\Qh &:= \{v \in \Vh : \la v, 1\ra=0\}.
\end{align}
\end{subequations}
Here $P_k(K)$ denote the space of polynomials with maximal degree $k$ on $K$.
In the case of periodic boundary conditions the boundary incorporated in $\Xh$ are neglected. 
 
This variational formulation allows to directly deduce a structure-preserving approximation.

\begin{problem}[Fully-discrete method]\label{prob:scheme}
    Let $(\phi_{h}^0,\vv_{h}^0)\in \Vh\times\Xh$ be given. We seek function $(\phi_h,\vv_h)\in \Pi_c^1(\Itau;\Vh\times\Xh)$ and $(\mu_h,p_h)\in \Pi^0(\Itau;\Vh\times\Qh)$ such that
    \begin{align}
  \la \dtau\phi_h,\psi_h \ra &- \la \phi_h^*\vv_h^{n+1}, \nabla\psi_h\ra + \la \mathbf{M}(\phi_h^*)\nabla(\mu_h^{n+1}+\alpha p_h^{n+1}),\nabla\psi_h \ra = 0, \label{eq:scheme1}\\
  \la \mu_h^{n+1},\xi_h \ra &- \gamma\la \nabla\phi_h^{n+1},\nabla\xi_h \ra - \la f'(\phi_h^{n+1},\phi_h^n),\xi_h \ra = 0,\label{eq:scheme2}\\
  \la \tfrac{\vv_h^{n+1}}{2}\dtau\widetilde\rho_h &+ \widetilde\rho_h^n\dtau\vv_h,\w_h \ra +  \mathbf{c}_{skw}(\rho_h^*\vv_h^*,\vv_h^{n+1},\w_h^{n+1})\nn \\
&+ \la \mathbf{S}(\phi^*,\nabla\vv_h^{n+1}),\nabla\w_h \ra - \la p_h^{n+1},\div(\w_h)\ra + \la \phi_h^*\nabla\mu_h^{n+1},\w_h \ra + \la \rho(\phi^*_h)g\mathbf{j}, \w_h\ra = 0, \label{eq:scheme3}\\
  \la \div(\vv_h^{n+1}),q_h \ra &+ \alpha\la \mathbf{M}(\phi_h^*)\nabla(\mu_h^{n+1}+\alpha p_h^{n+1}),\nabla q_h \ra= 0 \label{eq:scheme4}
\end{align}
holds for all $(\psi_h,\xi_h,\w_h,q_h)\in\Vh\times\Vh\times\Xh\times\Qh$ and for all $0\leq n < n_T.$
\end{problem}
The structure-preserving properties now follow in a manner similar to the continuous setting. In the discrete energy, the kinetic energy component is formulated using a clipped phase-field variable.
\begin{theorem}[Structure-preserving properties]\label{thm:discreten_stable}
    Every solution of Problem \ref{prob:scheme} satisfies the conservation of mass, total density and the energy dissipation law, i.e. it holds for all $0 \leq n\leq n_T-1$ that
    \begin{subequations}
    \begin{align}
      \la \phi_h^{n+1},1 \ra &= \la \phi_{0,h},1 \ra, \qquad \la \rho(\phi_h^{n+1}),1 \ra = \la \rho(\phi_{0,h}),1 \ra, \label{eq:conservation_mass_discrete}\\
       \widetilde{\mathcal{E}}(\phi_h^{n+1},\vv_h^{n+1}) &+ \tau\mathcal{D}_{\phi^*_h}(\mu_h^{n+1}+\alpha p_h^{n+1},\vv_h^{n+1}) \leq  \widetilde{\mathcal{E}}(\phi_h^{n},\vv_h^{n})\label{eq:dissipation_discrete}
    \end{align}
    \end{subequations}
    with discrete energy $\widetilde{\mathcal{E}}(\phi,\vv):=\int_\Omega \frac{\gamma}{2}\snorm{\nabla\phi}^2 + f(\phi) + \frac{\widetilde\rho}{2}\snorm{\vv}^2 + g\rho y.$
    
\end{theorem}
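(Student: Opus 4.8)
The plan is to mimic the continuous energy argument of \cref{lem:variational_TC} at the fully-discrete level, choosing the discrete analogues of the weights $p+gy\arho$, $\vv$, $\mu+gy\jrho$, $-\partial_t\phi$, now realized as admissible test functions in the discrete spaces. Concretely, I would first dispatch the conservation statements \eqref{eq:conservation_mass_discrete}: take $\psi_h = 1 \in \Vh$ in \eqref{eq:scheme1}; since $\nabla 1 = 0$ the convective and mobility terms vanish, leaving $\la \dtau\phi_h,1\ra = 0$, i.e. $\la \phi_h^{n+1},1\ra = \la\phi_h^n,1\ra$, and iterating gives mass conservation. Conservation of total density then follows because $\rho(\phi)$ is affine in $\phi$ by \eqref{eq: def rho}, so $\la \rho(\phi_h^{n+1}),1\ra = \arho\la 1,1\ra + \jrho\la\phi_h^{n+1},1\ra$ is conserved as well. (One should be mildly careful that $\widetilde\rho_h^n$ in the momentum term is the clipped extension while the conserved quantity uses the affine $\rho$; this is consistent with the energy $\widetilde{\mathcal E}$ using $\widetilde\rho$ in the kinetic part but $\rho$ in the gravitational part.)

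For the dissipation inequality \eqref{eq:dissipation_discrete}, I would test \eqref{eq:scheme4} with $q_h = p_h^{n+1} + g(y_h - \bar y)\arho$, where $y_h$ is the $\Vh$-interpolant of the coordinate $y$ (needed so the test function lies in $\Qh$; the mean-subtraction is harmless since $\la\div\vv_h^{n+1},1\ra=0$ and $\nabla\bar y = 0$), test \eqref{eq:scheme3} with $\w_h = \vv_h^{n+1}$, and test \eqref{eq:scheme1}--\eqref{eq:scheme2} with $\psi_h = \mu_h^{n+1} + gy_h\jrho$ and $\xi_h = -\dtau\phi_h$ respectively, then add everything. The skew-symmetric convection term drops out via $\mathbf{c}_{skw}(\rho_h^*\vv_h^*,\vv_h^{n+1},\vv_h^{n+1}) = 0$. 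The Korteweg/chemical-potential coupling $\la\phi_h^*\nabla\mu_h^{n+1},\vv_h^{n+1}\ra$ in \eqref{eq:scheme3} cancels against the convective term $-\la\phi_h^*\vv_h^{n+1},\nabla\mu_h^{n+1}\ra$ from \eqref{eq:scheme1} after noting $\la\phi_h^*\nabla\mu_h^{n+1},\vv_h^{n+1}\ra + \la\phi_h^*\vv_h^{n+1},\nabla\mu_h^{n+1}\ra$ telescopes into a boundary term that vanishes under the stated boundary conditions; the pressure terms $-\la p_h^{n+1},\div\w_h\ra$ and $\la\div\vv_h^{n+1},p_h^{n+1}\ra$ cancel directly; and the gravitational pieces combine, using the interpolant identity and the boundary condition $\vv_h^{n+1}\cdot\mathbf n|_{\partial\Omega} = 0$, into zero exactly as in the continuous proof. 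What survives is $\la\mathbf{S}(\phi_h^*,\nabla\vv_h^{n+1}),\nabla\vv_h^{n+1}\ra + \la\mathbf M(\phi_h^*)\nabla(\mu_h^{n+1}+\alpha p_h^{n+1}),\nabla(\mu_h^{n+1}+\alpha p_h^{n+1})\ra = \mathcal D_{\phi_h^*}(\mu_h^{n+1}+\alpha p_h^{n+1},\vv_h^{n+1})$, all nonnegative by positive semi-definiteness of $\mathbf S$ and $\mathbf M$.

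The crux — and the reason the scheme is designed the way it is — lies in showing that the remaining time-difference terms bound the energy difference from above. For the kinetic part I would use the algebraic identity $\la \tfrac{\vv_h^{n+1}}{2}\dtau\widetilde\rho_h + \widetilde\rho_h^n\dtau\vv_h,\vv_h^{n+1}\ra = \tfrac1\tau\left(\la\tfrac{\widetilde\rho_h^{n+1}}{2}|\vv_h^{n+1}|^2,1\ra - \la\tfrac{\widetilde\rho_h^n}{2}|\vv_h^n|^2,1\ra\right) + \tfrac1{2\tau}\la\widetilde\rho_h^n,|\vv_h^{n+1}-\vv_h^n|^2\ra$, which follows from expanding $\widetilde\rho_h^n(\vv_h^{n+1}-\vv_h^n)\cdot\vv_h^{n+1}$ and $\tfrac12(\widetilde\rho_h^{n+1}-\widetilde\rho_h^n)|\vv_h^{n+1}|^2$; since $\widetilde\rho_h^n \ge 0$ by construction of the positive extension, the last term is nonnegative and can be discarded to obtain the inequality — this is precisely where positivity of $\widetilde\rho$, rather than of $\rho$, is all that is needed. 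For the interface energy I would use convexity-type estimates: $\gamma\la\nabla\phi_h^{n+1},\nabla\dtau\phi_h\ra \ge \tfrac1\tau(\tfrac\gamma2\|\nabla\phi_h^{n+1}\|^2 - \tfrac\gamma2\|\nabla\phi_h^n\|^2)$ by the elementary identity $2a(a-b) = a^2 - b^2 + (a-b)^2$, and $\la f'(\phi_h^{n+1},\phi_h^n),\dtau\phi_h\ra = \tfrac1\tau\la f(\phi_h^{n+1}) - f(\phi_h^n),1\ra$ exactly, by the defining property \eqref{eq:timeavg} of the time-averaged nonlinearity — this last exact identity is the payoff of the averaging construction and avoids any splitting error. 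Finally the gravitational term contributes $\tfrac1\tau\la g\rho(\phi_h^{n+1})y - g\rho(\phi_h^n)y,1\ra$ exactly, again by affineness of $\rho$. Collecting these, multiplying through by $\tau$, and rearranging yields exactly \eqref{eq:dissipation_discrete}. The main obstacle is bookkeeping the boundary/cancellation terms correctly under the two alternative boundary-condition sets and making sure the coordinate function $y$ is consistently replaced by its discrete interpolant $y_h$ everywhere it appears as a test-function ingredient; the energy estimates themselves are routine once the right test functions and the identity for $\widetilde\rho_h^n\,|\vv_h^{n+1}-\vv_h^n|^2 \ge 0$ are in place.
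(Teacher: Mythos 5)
Your proposal is correct and takes essentially the same route as the paper's proof: the same test functions $q_h=p_h^{n+1}+g(y-\bar y)\arho$, $\w_h=\vv_h^{n+1}$, $\psi_h=\mu_h^{n+1}+gy\jrho$, $\xi_h=-\dtau\phi_h$, the same discrete kinetic-energy identity whose remainder $\tfrac{1}{2\tau}\la\widetilde\rho_h^n,|\vv_h^{n+1}-\vv_h^n|^2\ra\ge 0$ is exactly where the positive extension enters, the exact chain-rule identity for the time-averaged $f'$, and the same gravitational cancellation via $\vv_h\cdot\mathbf{n}=0$. Two cosmetic remarks only: the coordinate $y$ is itself (piecewise) linear, hence already in $\Vh$, so no interpolant $y_h$ is needed; and the terms $\la\phi_h^*\nabla\mu_h^{n+1},\vv_h^{n+1}\ra$ and $-\la\phi_h^*\vv_h^{n+1},\nabla\mu_h^{n+1}\ra$ are the identical integral with opposite signs and cancel directly, without any integration by parts, boundary term, or boundary condition.
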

\begin{proof}
    Conservation of mass follows again by insertion of $\psi_h=1\in\Vh.$ The conservation of the density is a direct consequence from conservation of mass, since $\rho(\phi)$ is affine in $\phi$, i.e. $\rho'(\phi)=\jrho=\text{const}$. 
    
    Next we focus on the energy dissipation relation. We introduce the clipped kinetic energy density via $\widetilde{K}(\phi,\vv):=\frac{\widetilde\rho}{2}\snorm{\vv}^2.$  As before we denote with $\bar y = \la y, 1\ra.$  Taking $q_h=p_h^{n+1}+g (y-\bar y)\arho  \in \Qh$ provides:
\begin{subequations}\label{eq:proof3: div}
    \begin{align}
     0=&~\la \div(\vv_h^{n+1}),p_h^{n+1} \ra + \la \mathbf{M}(\phi_h^*)\nabla(\mu_h^{n+1}+\alpha p_h^{n+1}),\nabla (\alpha p_h^{n+1}) \ra \nn \\
     &~+ g \arho \la \div(\vv_h^{n+1}),y-\bar y \ra -  g \jrho \la \mathbf{M}(\phi_h^*)\nabla(\mu_h^{n+1}+\alpha p_h^{n+1}),\mathbf{j} \ra\\
     =&~\la \div(\vv_h^{n+1}),p_h^{n+1} \ra + \la \mathbf{M}(\phi_h^*)\nabla(\mu_h^{n+1}+\alpha p_h^{n+1}),\nabla (\alpha p_h^{n+1}) \ra \nn\\
     &~+ g \arho \la \div(\vv_h^{n+1}),y \ra -  g \jrho \la \mathbf{M}(\phi_h^*)\nabla(\mu_h^{n+1}+\alpha p_h^{n+1}),\mathbf{j} \ra,
     \end{align}
 \end{subequations}
 where we used that $\la \div(\vv_h^{n+1}),\bar y \ra=0$ and $\nabla\bar y =0.$
    Taking $\w_h=\vv_h^{n+1}\in\Xh$ provides:  
    \begin{align}\label{eq:proof3: mom}
    0 =&~ \frac{1}{\tau}\la \widetilde{K}(\phi_h^{n+1},\vv_h^{n+1}) - \widetilde{K}(\phi_h^{n},\vv_h^{n}),1\ra + \tau\frac{\widetilde\rho_h^n}{2}\norm{\dtau\vv_h}_0^2 \nn\\
    &~+ \la \mathbf{S}(\phi^*_h,\nabla\vv_h^{n+1}),\nabla\vv_h^{n+1} \ra - \la p_h^{n+1},\div(\vv_h^{n+1})\ra + \la \phi_h^*\nabla\mu_h^{n+1},\vv_h^{n+1} \ra + \la \rho(\phi^*_h)\mathbf{j}, \vv_h^{n+1}\ra,
  \end{align}
  where we have used the identities:
  \begin{subequations}
    \begin{align}
    \la \tfrac{1}{2}\vv_h^{n+1}\dtau\widetilde\rho_h + \widetilde\rho_h^n\dtau\vv_h^{n+1},\vv_h^{n+1} \ra  =&~\frac{1}{\tau}\la \widetilde{K}(\phi_h^{n+1},\vv_h^{n+1}) - \widetilde{K}(\phi_h^{n},\vv_h^{n}),1\ra + \tau\frac{\widetilde\rho_h^n}{2}\norm{\dtau\vv_h}_0^2\\
    \mathbf{c}_{skw}(\rho_h^*\vv_h^*,\vv_h^{n+1},\vv_h^{n+1}) = &~0.
    \end{align}
  \end{subequations}
      Finally, taking $\psi_h = \mu_h^{n+1}+gy\jrho \in \Vh$ and $\xi_h=-d_\tau^{n+1} \phi_h \in \Vh$, and subsequently adding the results provides:
\begin{subequations}\label{eq:proof3: phase}
    \begin{align}
     0=&~ \frac{1}{\tau}\la \Psi(\phi_h^{n+1}) - \Psi(\phi_h^{n}),1\ra+ \tau\frac{\gamma}{2}\norm{\nabla\dtau\phi_h}_0^2 - \la \phi_h^*\vv_h^{n+1}, \nabla\mu_h^{n+1} \ra+ \la \mathbf{M}(\phi_h^*)\nabla(\mu_h^{n+1}+\alpha p_h^{n+1}),\nabla\mu_h^{n+1}  \ra,\nn\\
      &+\frac{1}{\tau}\la G(\phi_h^{n+1}) - G(\phi_h^{n}),1\ra -g\jrho \la \phi_h^*\vv_h^{n+1}, \mathbf{j}\ra + g \jrho\la \mathbf{M}(\phi_h^*)\nabla(\mu_h^{n+1}+\alpha p_h^{n+1}),\mathbf{j} \ra,
    \end{align}
\end{subequations}
  where we have used the identities:
  \begin{subequations}
      \begin{align}
        \la \dtau \phi_h,f'(\phi_h^{n+1},\phi_h^n) \ra=&~ \frac{1}{\tau}\int_{t^n}^{t^{n+1}} \la f'(\phi_h),\dt\phi_h\ra ds = \frac{1}{\tau}\la f(\phi_h^{n+1})-f(\phi_h^n),1 \ra,\\
      \gamma\la \nabla\phi_h^{n+1},\nabla d_\tau^{n+1} \phi_h \ra + \frac{1}{\tau}\la f(\phi_h^{n+1})-f(\phi_h^n),1 \ra  =&~ \frac{1}{\tau}\la \Psi(\phi_h^{n+1}) - \Psi(\phi_h^{n}),1\ra+ \tau\frac{\gamma}{2}\norm{\nabla\dtau\phi_h}_0^2 ,\\
       \la \dtau\phi_h,gy\jrho  \ra = &~ \frac{1}{\tau}\la G(\phi_h^{n+1}) - G(\phi_h^{n}),1\ra,
  \end{align}
  \end{subequations}
  and that $\partial_t \phi_h$ is a piecewise constant in time. Addition of \eqref{eq:proof3: div}, \eqref{eq:proof3: mom} and \eqref{eq:proof3: phase} provides:
  \begin{align}
      \frac{1}{\tau}(\widetilde{\mathcal{E}}(\phi_h^{n+1},\vv_h^{n+1}) - \widetilde{\mathcal{E}}(\phi_h^{n},\vv_h^{n})) =&~  - \la \mathbf{S}(\phi^*_h,\nabla\vv_h^{n+1}),\nabla\vv_h^{n+1} \ra - \la \mathbf{M}(\phi_h^*)\nabla(\mu_h^{n+1}+\alpha p_h^{n+1}),\nabla(\mu_h^{n+1}+\alpha p_h^{n+1})  \ra \nn\\
 &~ - \tau\frac{\widetilde\rho_h^n}{2}\norm{\dtau\vv_h}_0^2 - \tau\frac{\gamma}{2}\norm{\nabla\dtau\phi_h}_0^2\nn\\
 \leq &~ - \mathcal{D}_{\phi^*_h}(\mu_h^{n+1}+\alpha p_h^{n+1},\vv_h^{n+1}),
  \end{align}
  where we have used the identity:
\begin{align}
    -g\jrho \la \phi_h^*\vv_h^{n+1}, \mathbf{j}\ra+ g \arho \la \div(\vv_h^{n+1}),y \ra   + \la \rho(\phi^*_h)g\mathbf{j}, \vv_h^{n+1}\ra = 0,%\la \arho g y \vv_h^{n+1}, \mathbf{n} \ra_\Gamma = 0,
\end{align}
  where the last identity follows the definition of $\Xh$, cf. \eqref{eq:defFespace}.
\end{proof}

\begin{remark}[Finite element function spaces]
 Note that the results in Theorem \ref{thm:discreten_stable} are not restricted to this particular choice of finite element spaces. We only require that $\Vh$ and $\Qh$ are $H^1$ conforming spaces containing at least piecewise-linear functions and that the finite element pair $\Xh\times\Qh$ is an inf-sup stable for the (Navier-)Stokes equation. Furthermore, the same conclusion holds for variable time step sizes, which may be relevant for long-time simulation.  
\end{remark}

\section{Numerical results}\label{sec:numerics}

In this section we will test the proposed numerical scheme. We will consider a test case regarding phase separation in \cref{subsec:phasesep}, and convergence test in \cref{subsec:conv} and the well-known rising bubble test case in \cref{subsec:risingbubble}.  

For all test cases we use an isotropic mobility matrix of the form $\mathbf{M} = m \mathbf{I}$. Furthermore, we consider the regular potential $f(\phi)=\frac{1}{4\beta}(1-\phi^2)^2$, for some parameter $\beta$. In this case the time averaged in \eqref{eq:timeavg} can be computed exactly by
\begin{equation}
  f'(\phi_h^{n+1},\phi_h^n) = \frac{1}{6}\left(f'(\phi_h^{n+1}) + 4f'\left(\frac{\phi_h^{n+1} +\phi_h^n}{2}\right)+ f'(\phi_h^{n})   \right). 
\end{equation}

The resulting nonlinear systems are tackled by Newton's method with a tolerance $10^{-6}.$ The resulting linear systems are solved using a direct solver. The code is implemented in FEniCS \cite{Fenics}\footnote{The code is available at \href{https://github.com/marcoteneikelder/structure-preserving-fem-nsch}{https://github.com/marcoteneikelder/structure-preserving-fem-nsch}.} as well as NGSolve \cite{schoberl2014c++}\footnote{The code is available at \href{https://github.com/AaronBrunk1/structure-preserving-fem-nsch}{https://github.com/AaronBrunk1/structure-preserving-fem-nsch}.} The placeholder quantities are all evaluated at time-step $n+1$.

\subsection{Phase separation}\label{subsec:phasesep}

In this subsection we consider an test case for phase separation inspired by \cite{Gong18}. The fix our domain to be $\Omega=[0,1]^2$ with periodic boundary conditions  and denote any points $\Omega$ by $(x,y)\in\Omega$.

We consider a regular sinus shaped profile for the phase-field and no initial velocity, i.e.
\begin{equation}
   \phi_0(x,y)=0.2\sin(4\pi x)\sin(4\pi y), \qquad \vv_0(x,y) = (0,0)^\top.
\end{equation}

We consider the parameter choices

\begin{equation*}
 m(\phi) = 10^{-2}(1-\phi^2)^2,\quad \beta = \gamma = 10^{-3/2},\quad \eta_1=\eta_2  = 10^{-2},\quad g=0.
\end{equation*}

The above experiment was conducted for different density ratios, i.e. $\rho_1:\rho_2\in\{10^0:10^3,10^0:10^2,10^0:10^1,10^1:10^0,10^2:10^0,10^3:10^0\}$.

In Figure \ref{fig:evophic} we illustrate the temporal evolution for the phase-field $\phi$ at several snapshots in time and the associated energy evolution and mass conservation error in Figure \ref{fig: meta}. First consider the density ratio is $10:1$, fluid phase 1 forms droplets within the other fluid phase. These droplets form rapidly, causing the energy to decay quickly to a saturated level. Similarly, when the density ratio is reversed to 1:10, the heavier fluid component forms droplets within the lighter fluid component. After a quite long saturation phase the matrix pattern connects into strips, again lowering the energy. In both scenarios, the heavier fluid component appears as droplets within the lighter fluid, demonstrating a symmetrical morphology. When increasing the density ratio we can observe a rescaling in time. The morphology effects of droplet formation for the ratios $10:1$ and $1:10$ is observed at $t=0.1$, while for the ratios $100:1$ and $1:100$ at $t=0.3$ and finally at time $t=1$ for the ratios $1000:1$ and $1:1000$.

	\begin{figure}[htbp!]
		\centering
		\footnotesize
		\begin{tabular}{c@{}c@{}c@{}c}
			%\multicolumn{4}{c}{\includegraphics[trim={17.0cm 67.5cm 15.0cm 0.2cm},clip,scale=0.14]{}} \\[-0.5em]
			\includegraphics[trim={38cm 12.4cm 38.cm 8.5cm},clip,scale=0.057]{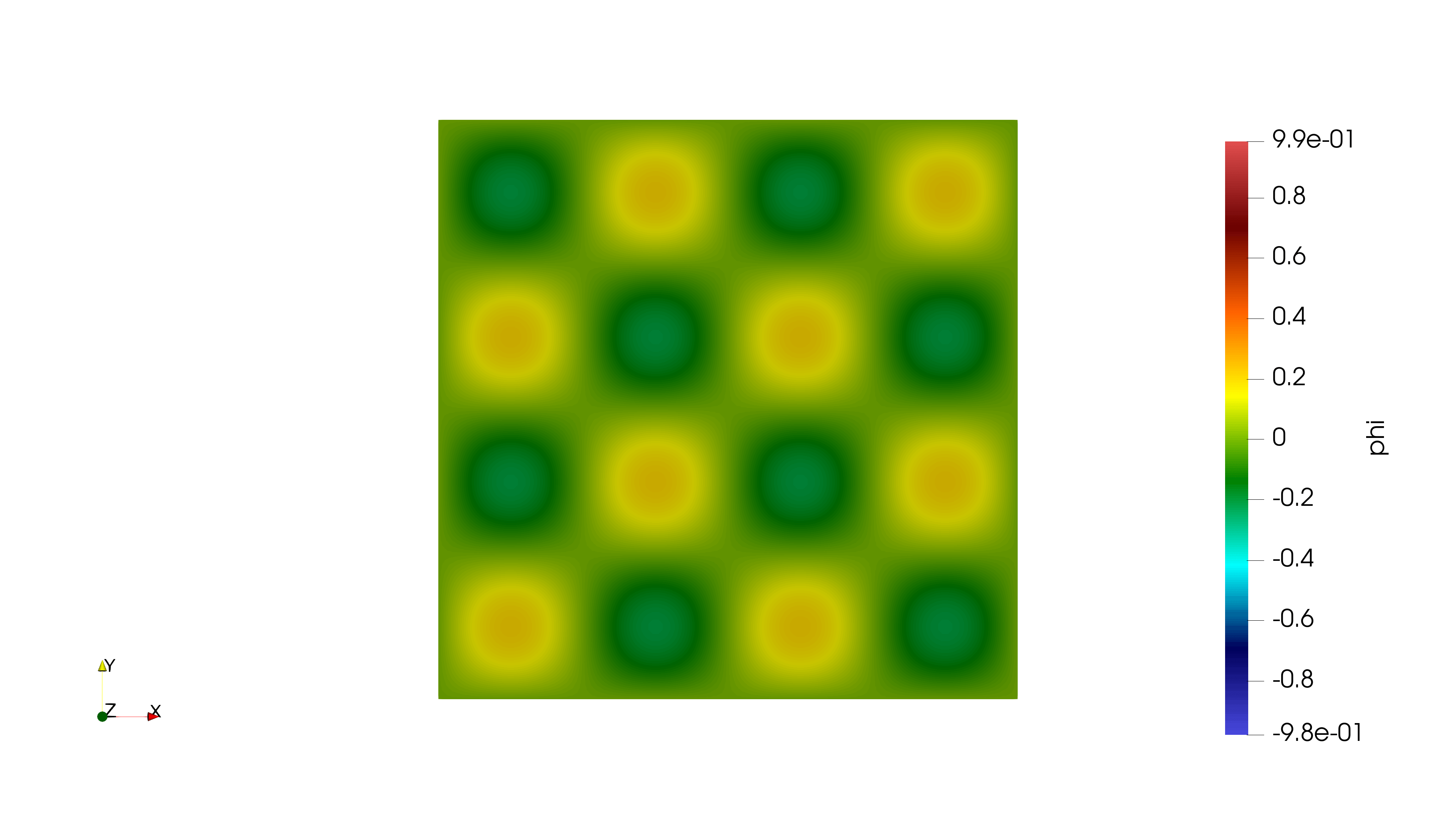} 
			&
			\includegraphics[trim={38cm 12.4cm 38.cm 8.5cm},clip,scale=0.057]{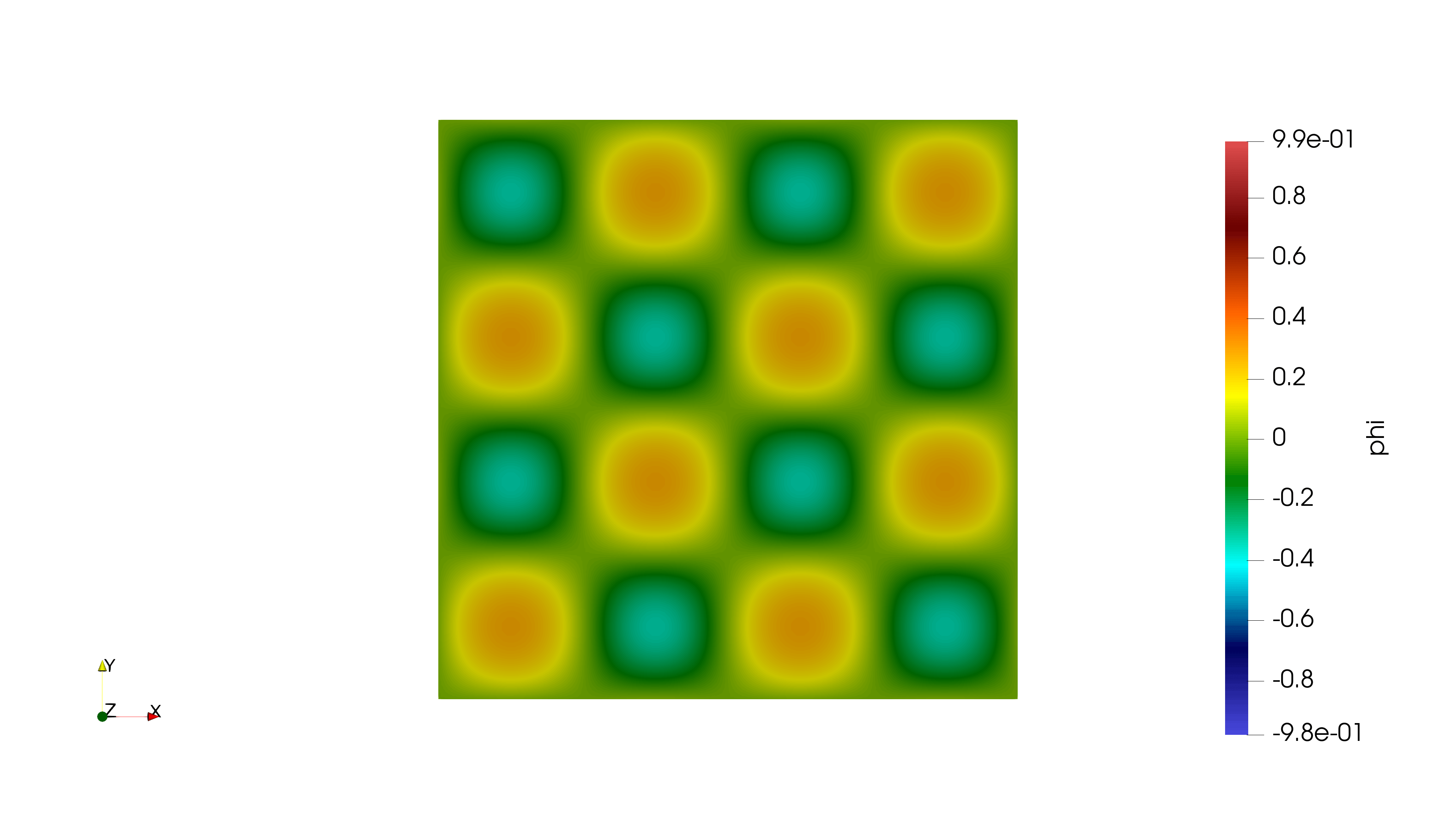}  
			&
			\includegraphics[trim={38cm 12.4cm 38.cm 8.5cm},clip,scale=0.057]{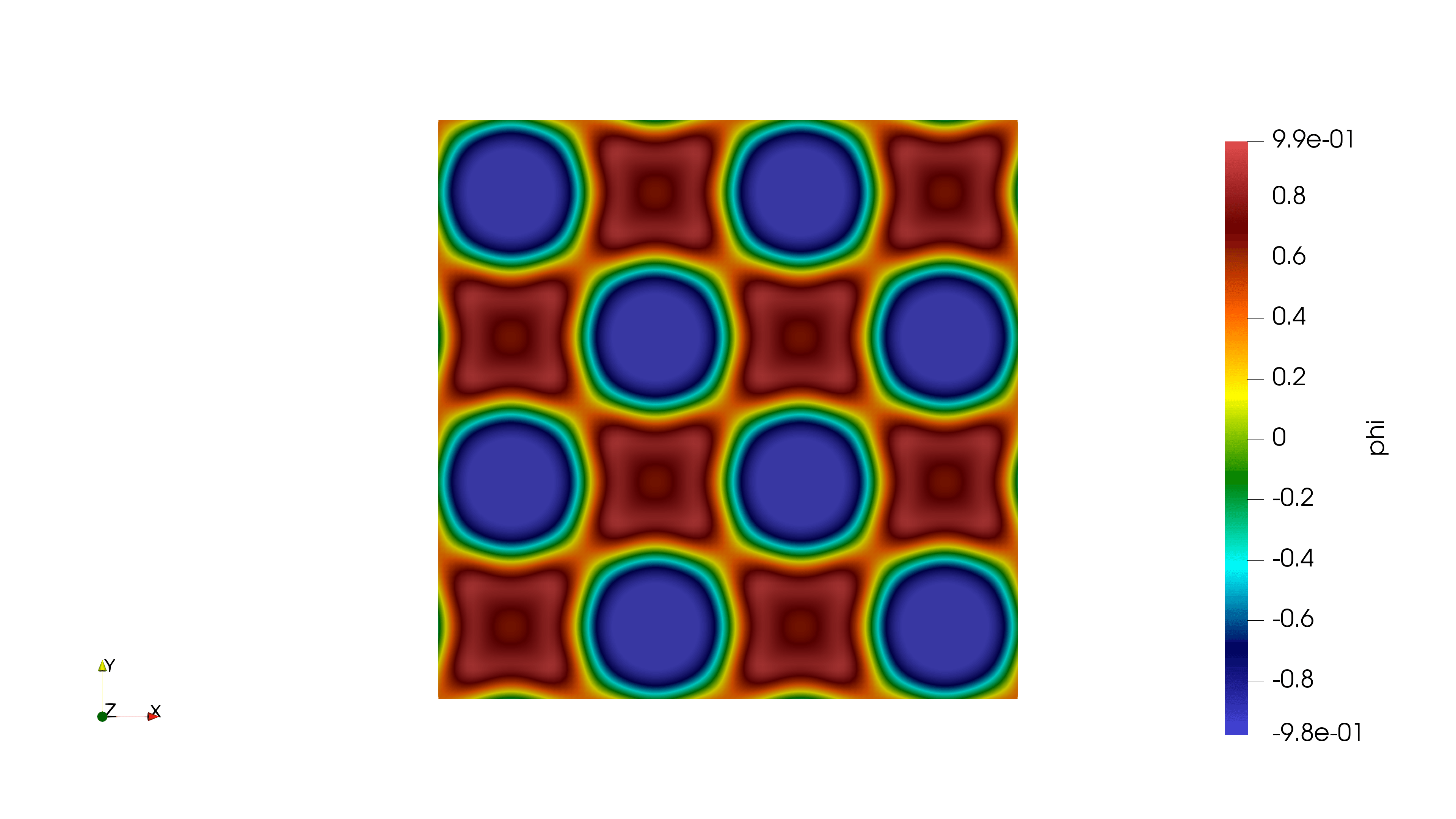} 
			&
			\includegraphics[trim={38cm 12.4cm 38.cm 8.5cm},clip,scale=0.057]{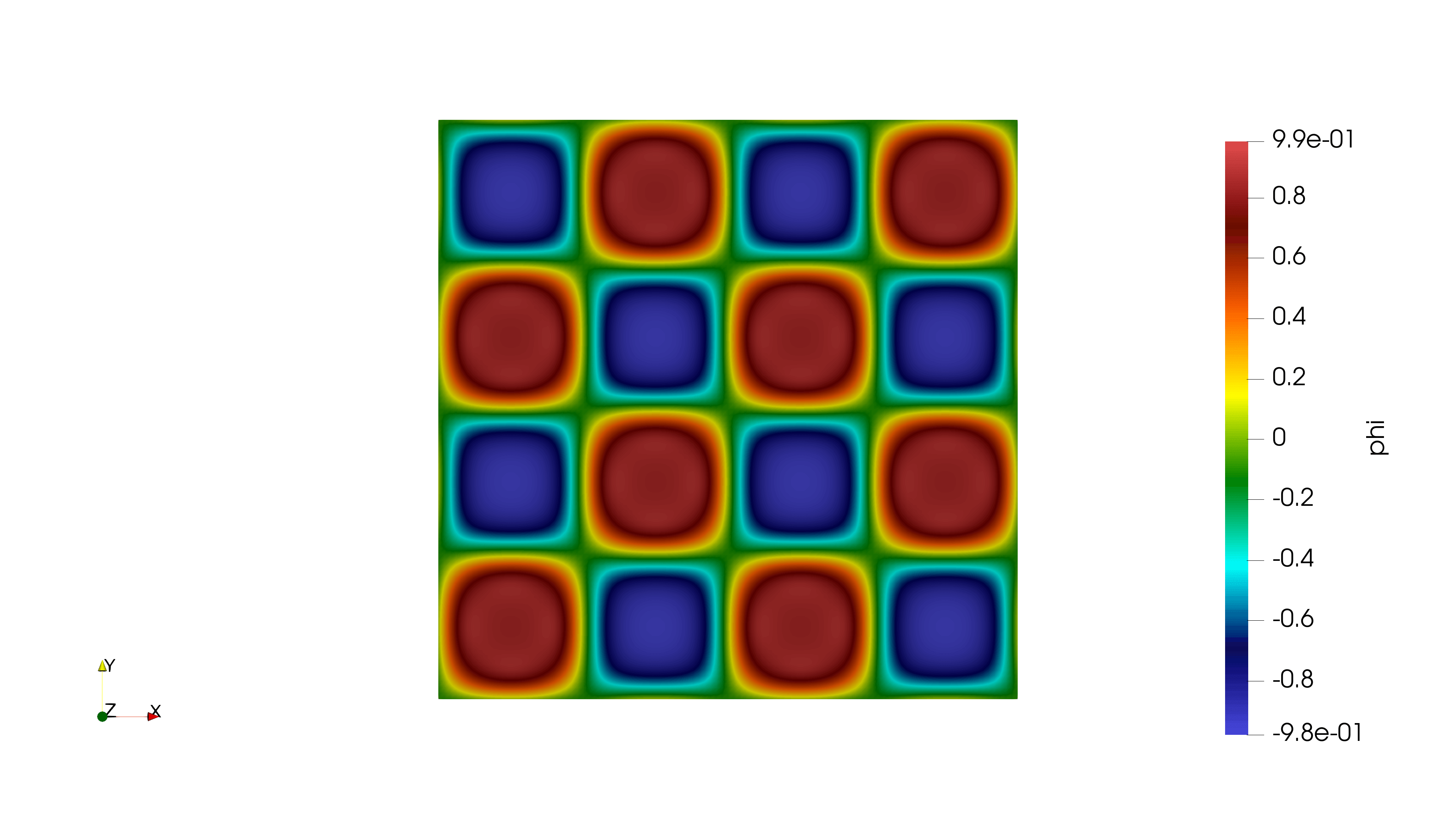}  \\
			%\multicolumn{4}{c}{\includegraphics[trim={17.0cm 67.5cm 16.0cm 0.2cm},clip,scale=0.15]{}} \\[-0.5em]
			\includegraphics[trim={38cm 12.4cm 38.cm 8.5cm},clip,scale=0.055]{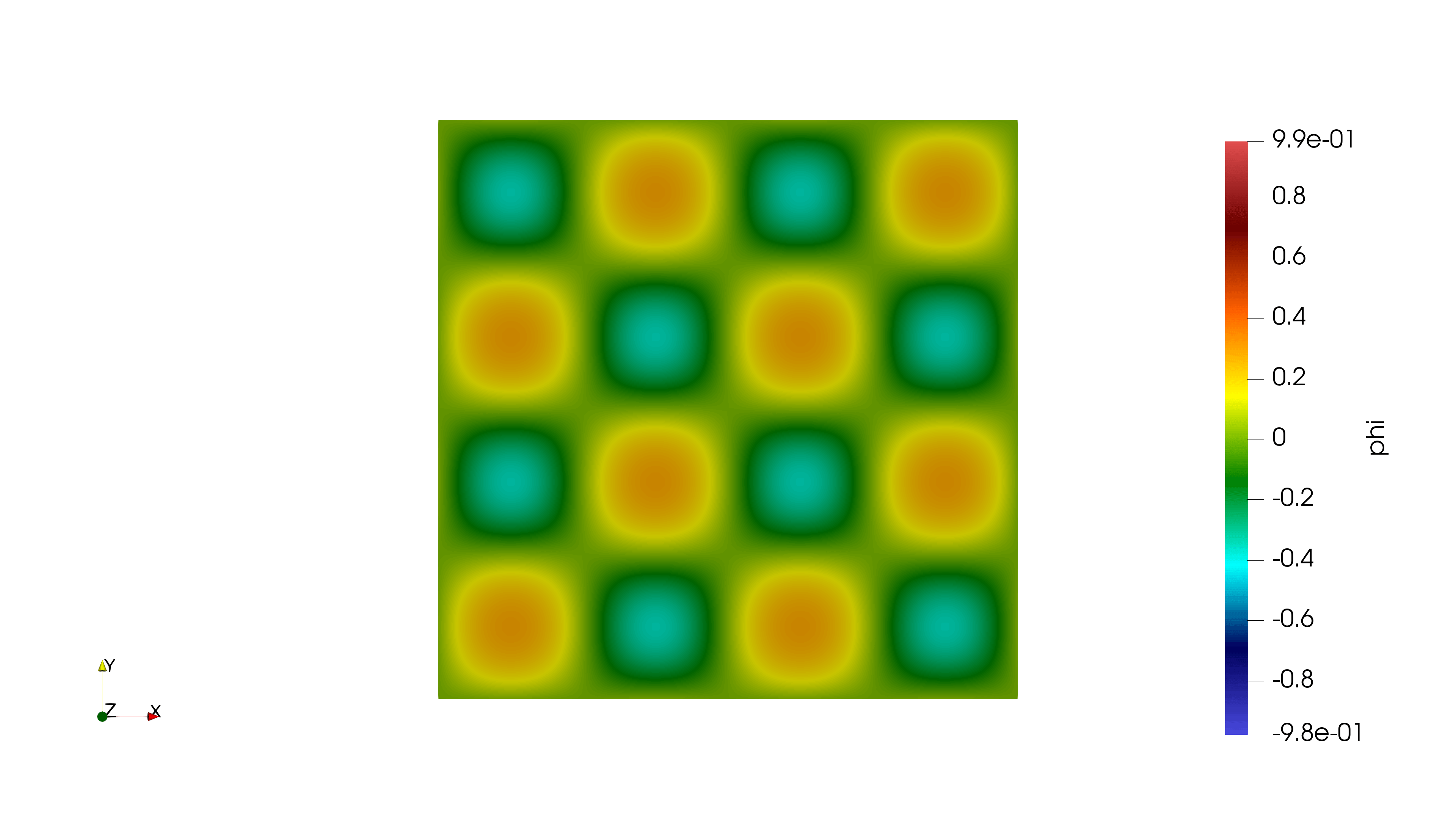} 
			&
			\includegraphics[trim={38cm 12.4cm 38.cm 8.5cm},clip,scale=0.055]{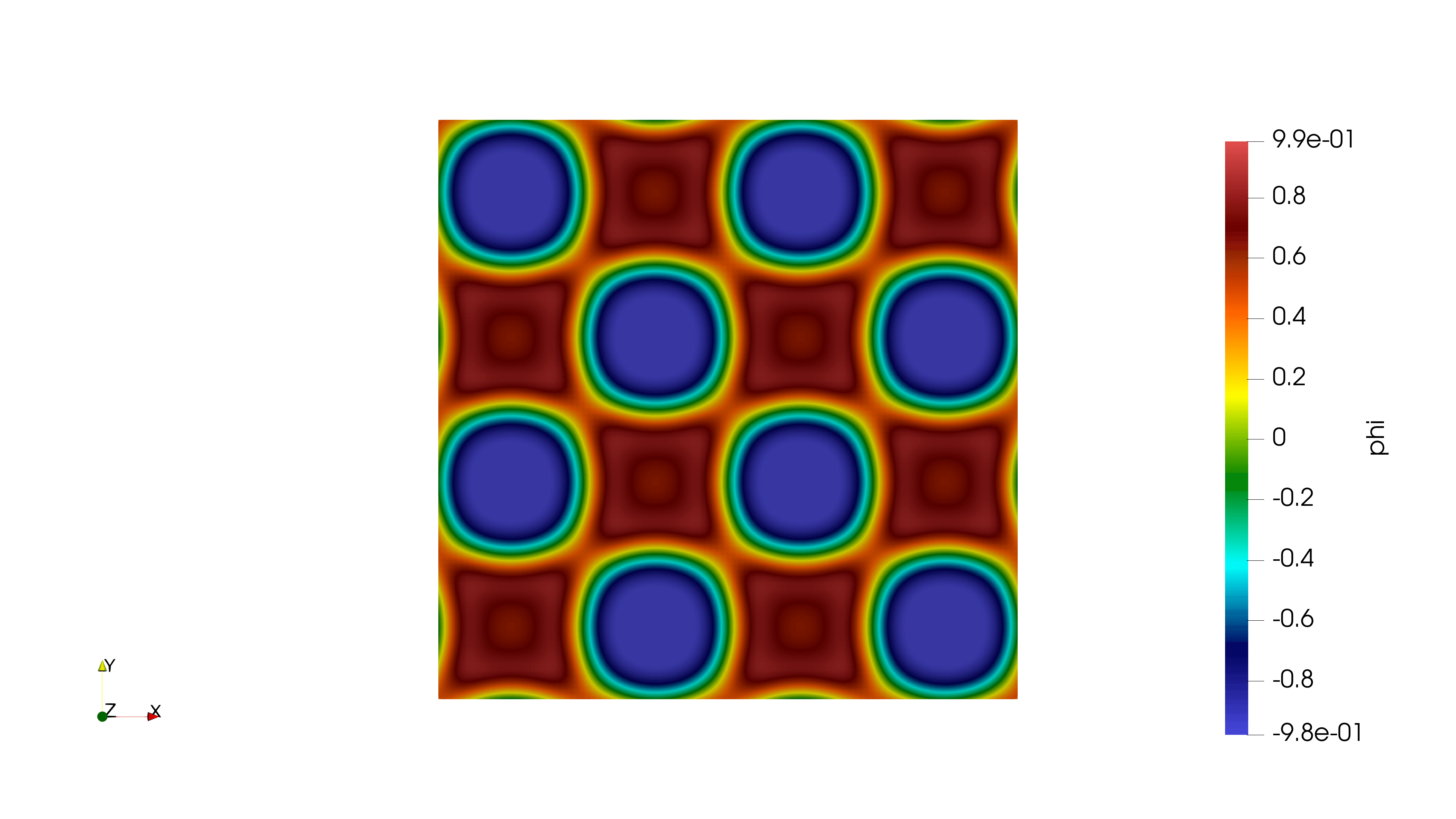}  
			&
			\includegraphics[trim={38cm 12.4cm 38.cm 8.5cm},clip,scale=0.055]{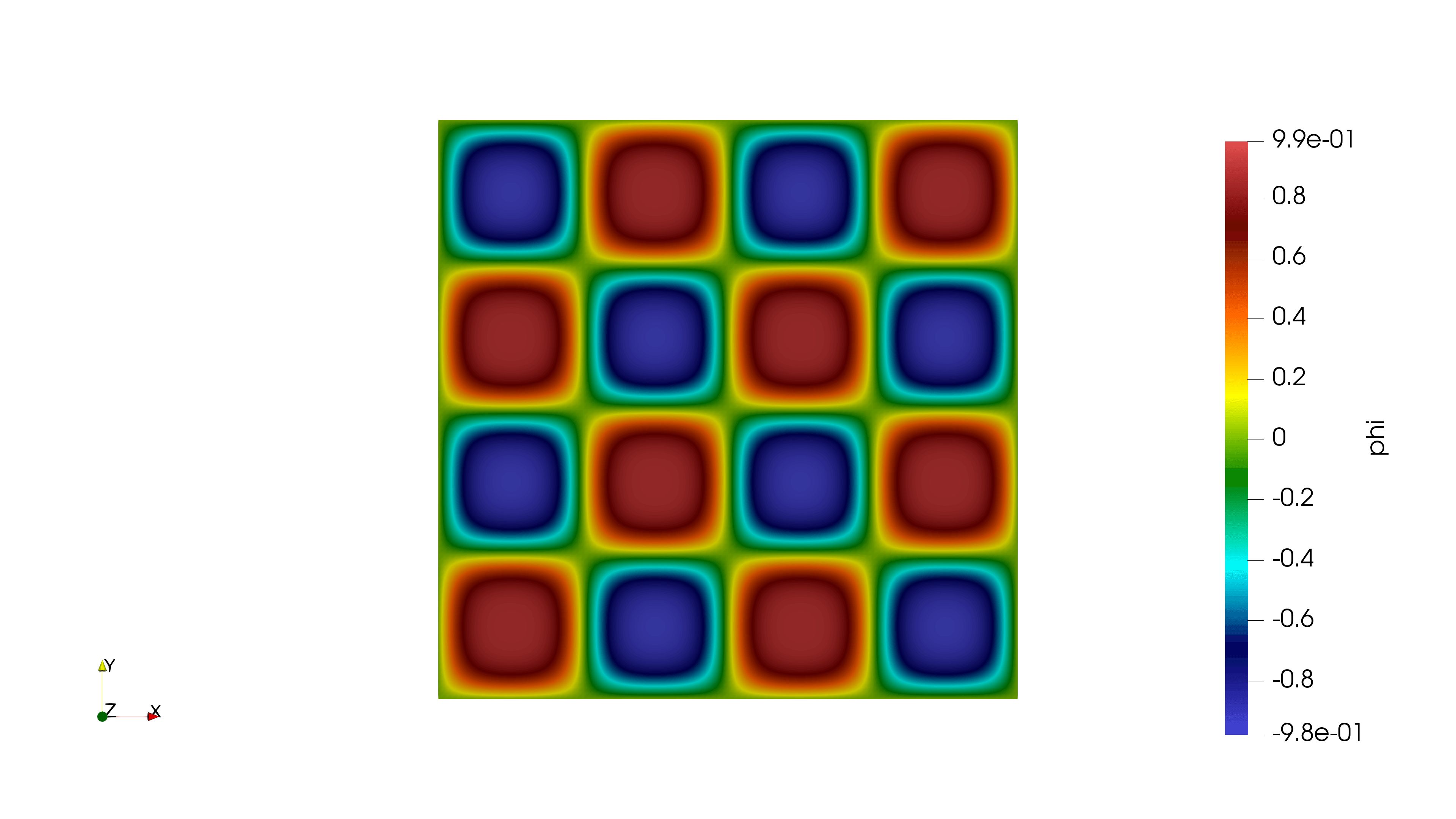} 
			&
			\includegraphics[trim={38cm 12.4cm 38.cm 8.5cm},clip,scale=0.055]{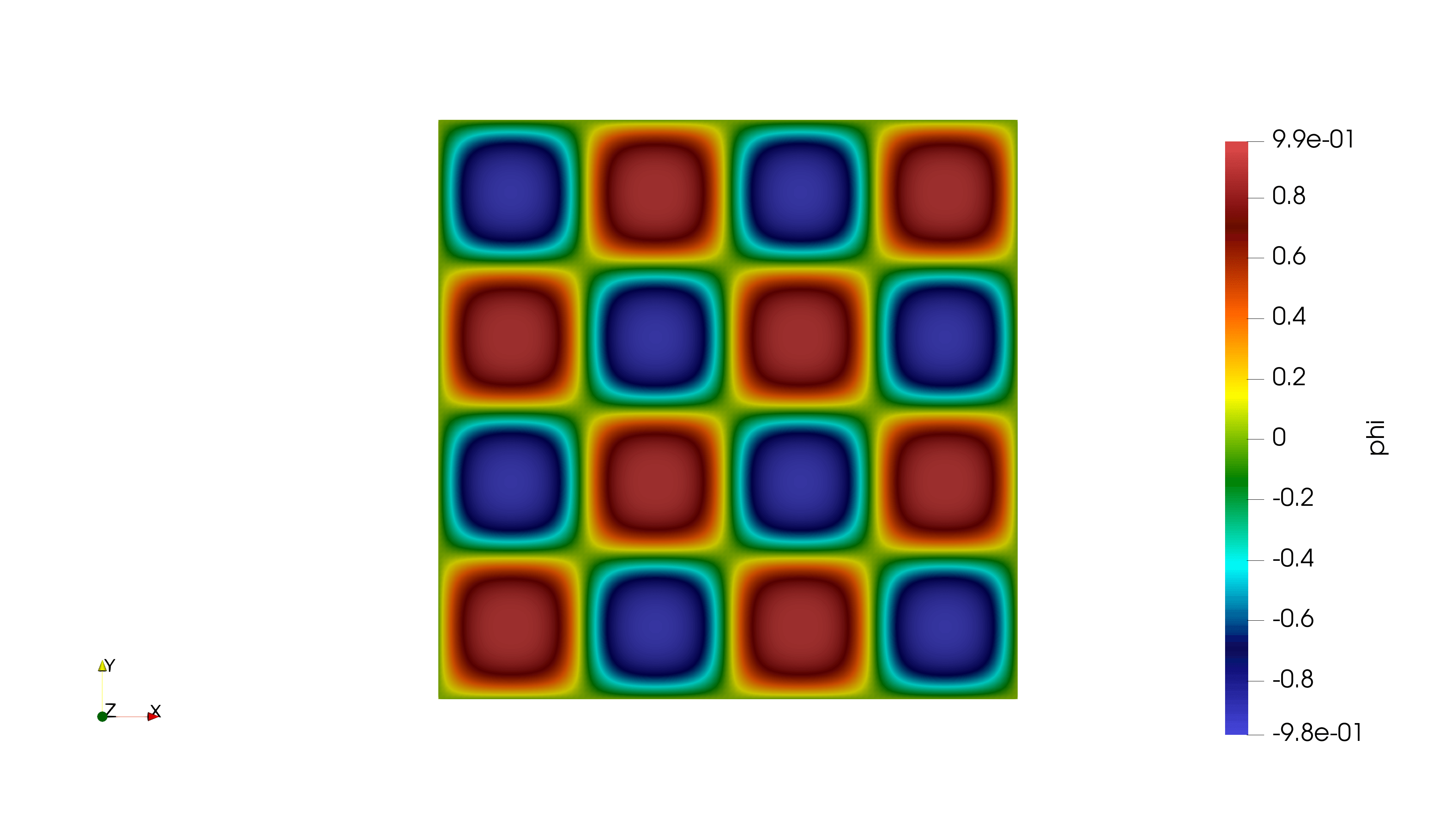}  \\
			%\multicolumn{4}{c}{\includegraphics[trim={17.0cm 67.5cm 16.0cm 0.2cm},clip,scale=0.15]{}} \\[-0.5em]
			\includegraphics[trim={38cm 12.4cm 38.cm 8.5cm},clip,scale=0.055]{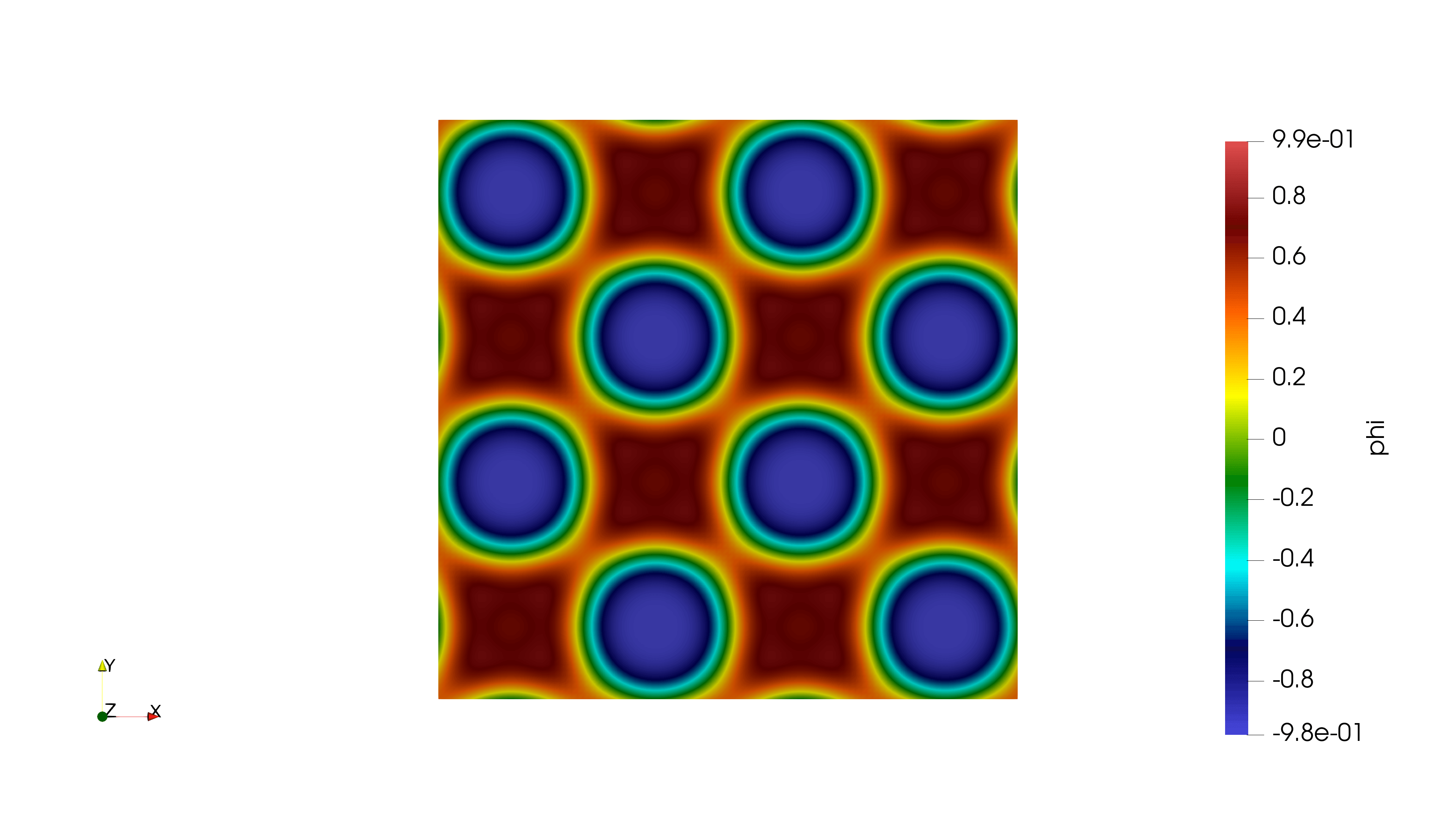} 
			&
			\includegraphics[trim={38cm 12.4cm 38.cm 8.5cm},clip,scale=0.055]{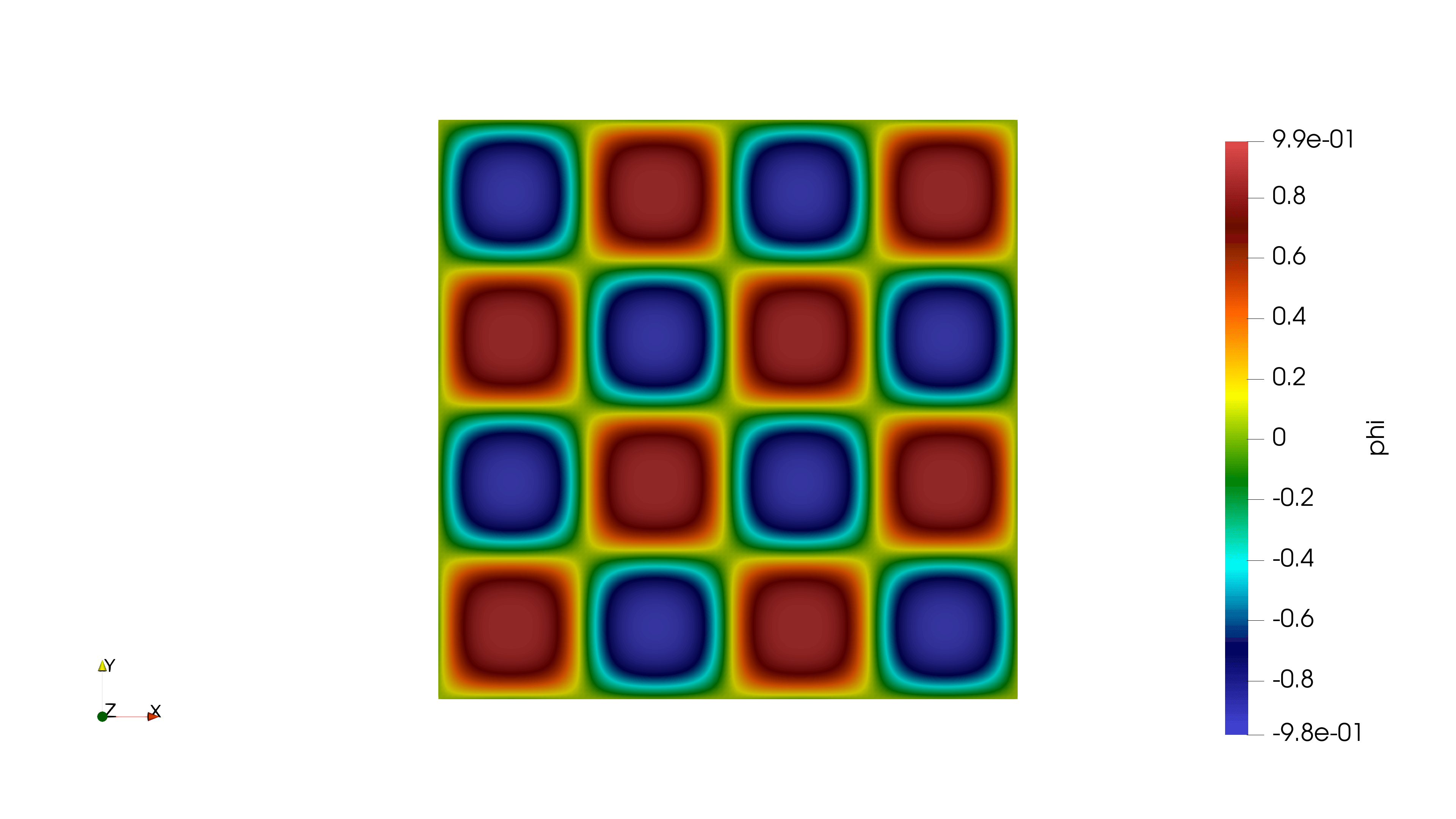}  
			&
			\includegraphics[trim={38cm 12.4cm 38.cm 8.5cm},clip,scale=0.055]{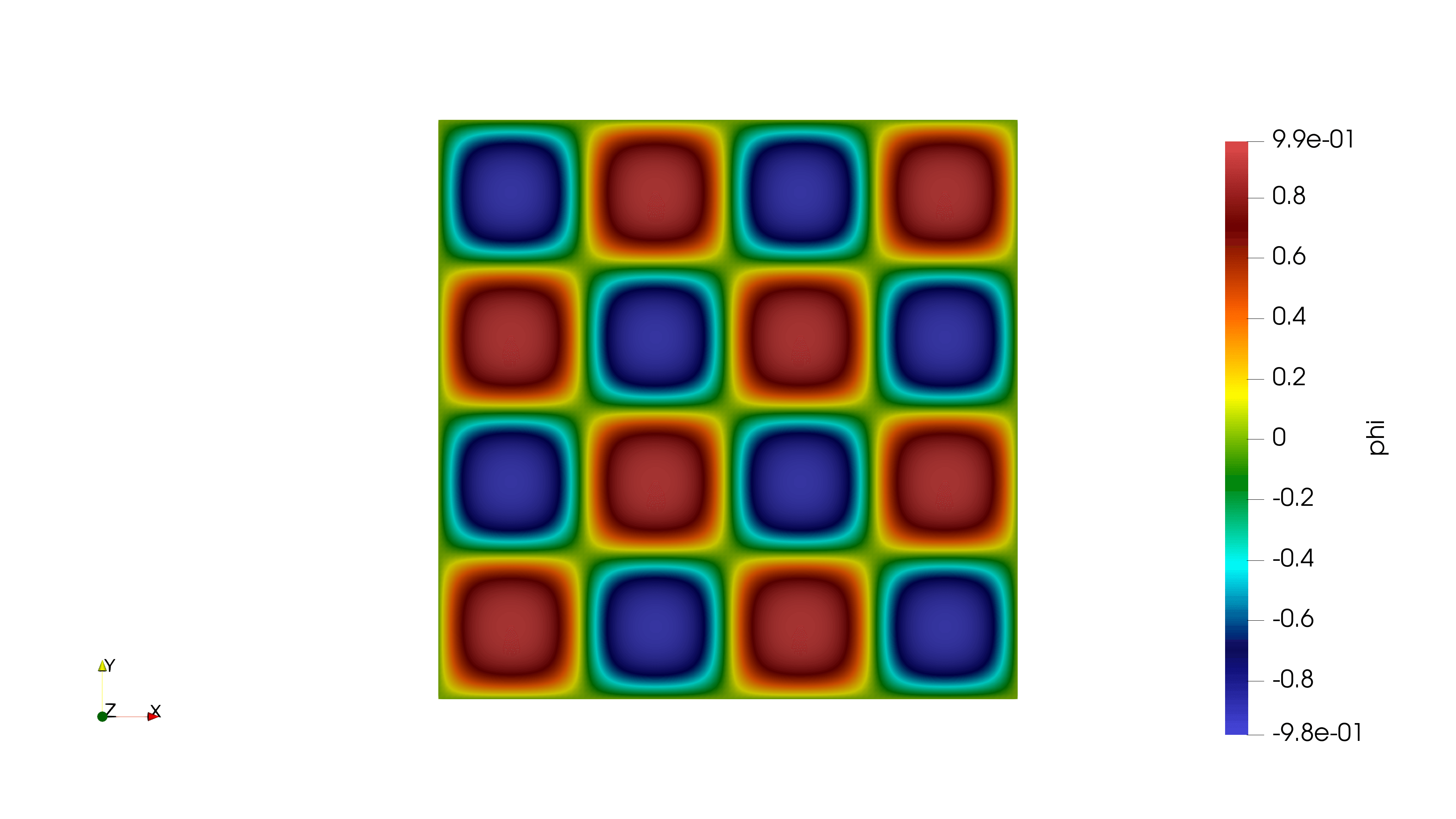} 
			&
			\includegraphics[trim={38cm 12.4cm 38.cm 8.5cm},clip,scale=0.055]{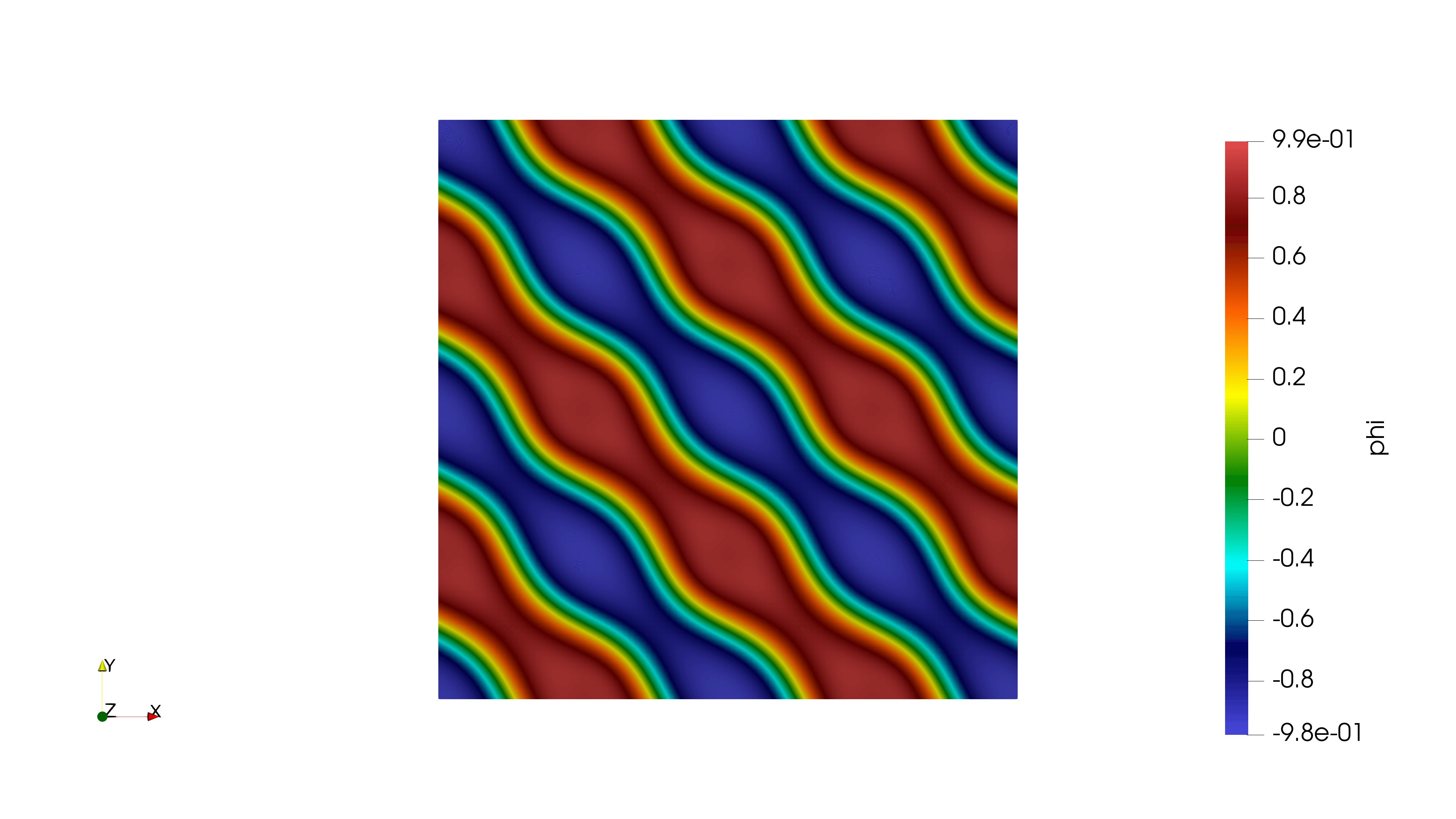} \\
			%\multicolumn{4}{c}{\includegraphics[trim={17.0cm 67.5cm 16.0cm 0.2cm},clip,scale=0.15]{1to1.0000.png}} \\[-0.5em]
%			\includegraphics[trim={38cm 12.4cm 38.cm 8.5cm},clip,scale=0.055]{1to1.0100.png}
%			&
%			\includegraphics[trim={38cm 12.4cm 38.cm 8.5cm},clip,scale=0.055]{1to1.1000.png}  
%			&
%			\includegraphics[trim={38cm 12.4cm 38.cm 8.5cm},clip,scale=0.055]{1to1.1500.png} 
%			&
%			\includegraphics[trim={38cm 12.4cm 38.cm 8.5cm},clip,scale=0.055]{1to1.2000.png}  \\
			%\multicolumn{4}{c}{\includegraphics[trim={17.0cm 67.5cm 16.0cm 0.2cm},clip,scale=0.15]{}} \\[-0.5em]
			\includegraphics[trim={38cm 12.4cm 38.cm 8.5cm},clip,scale=0.055]{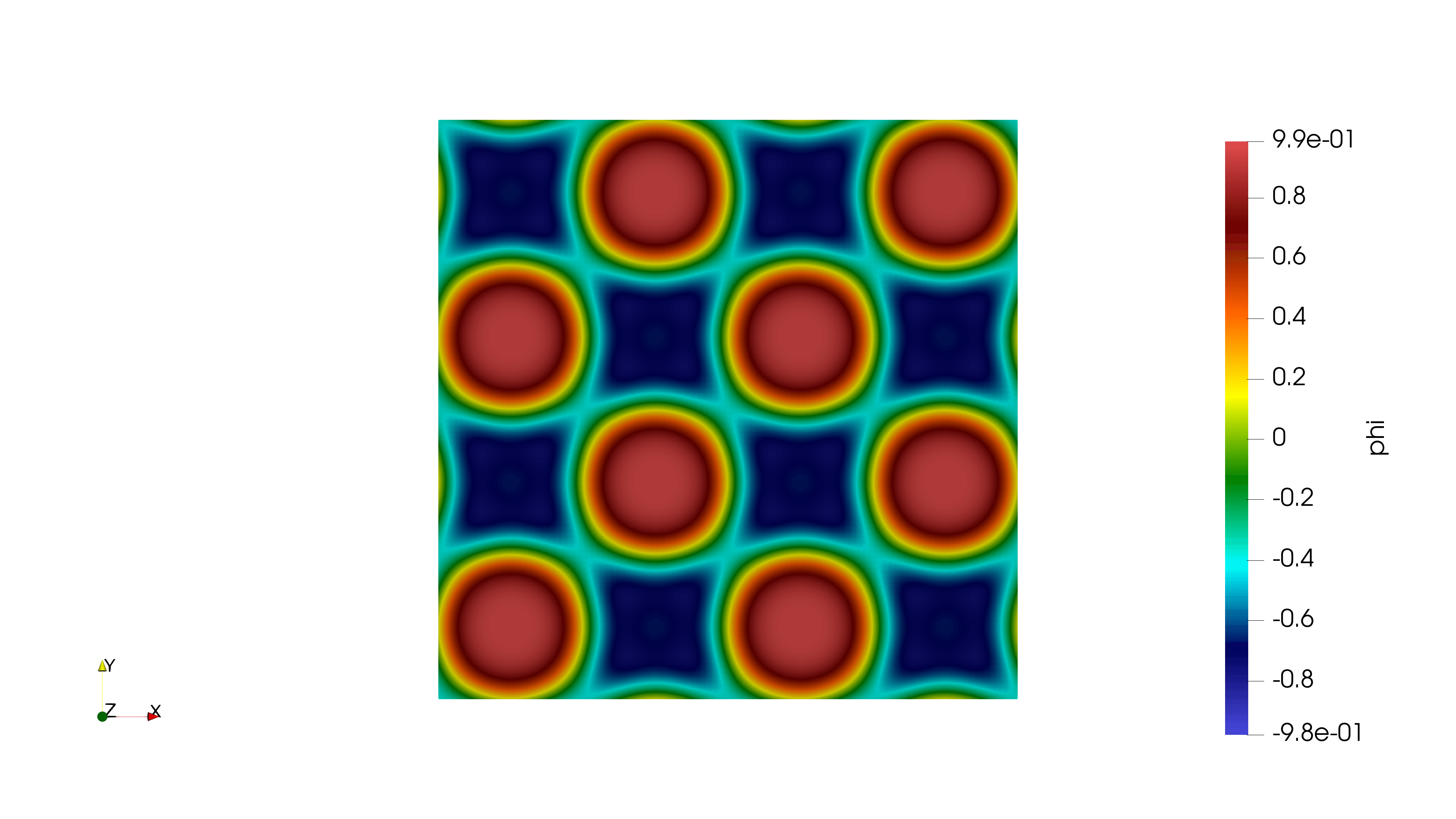} 
			&
			\includegraphics[trim={38cm 12.4cm 38.cm 8.5cm},clip,scale=0.055]{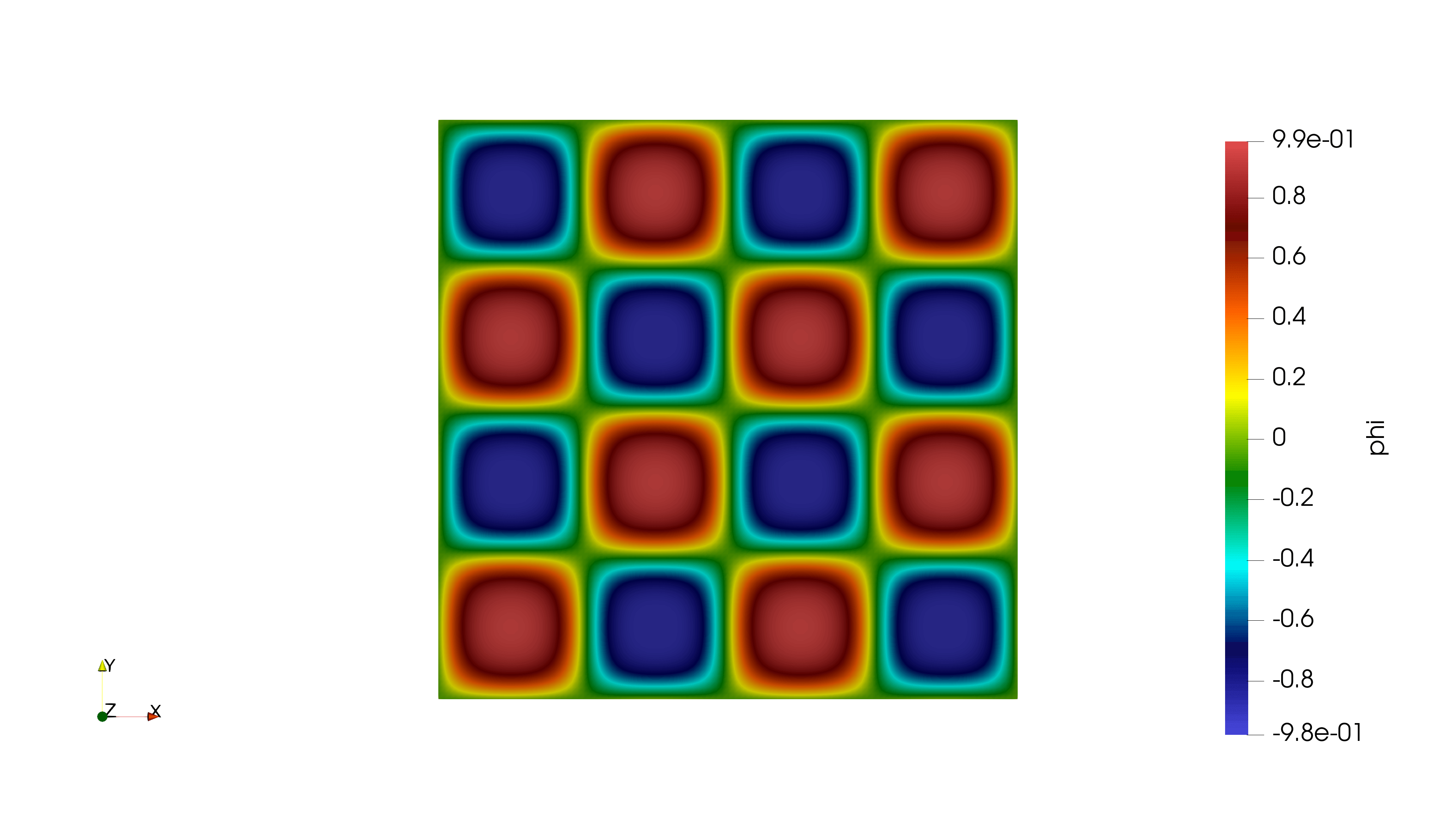}  
			&
			\includegraphics[trim={38cm 12.4cm 38.cm 8.5cm},clip,scale=0.055]{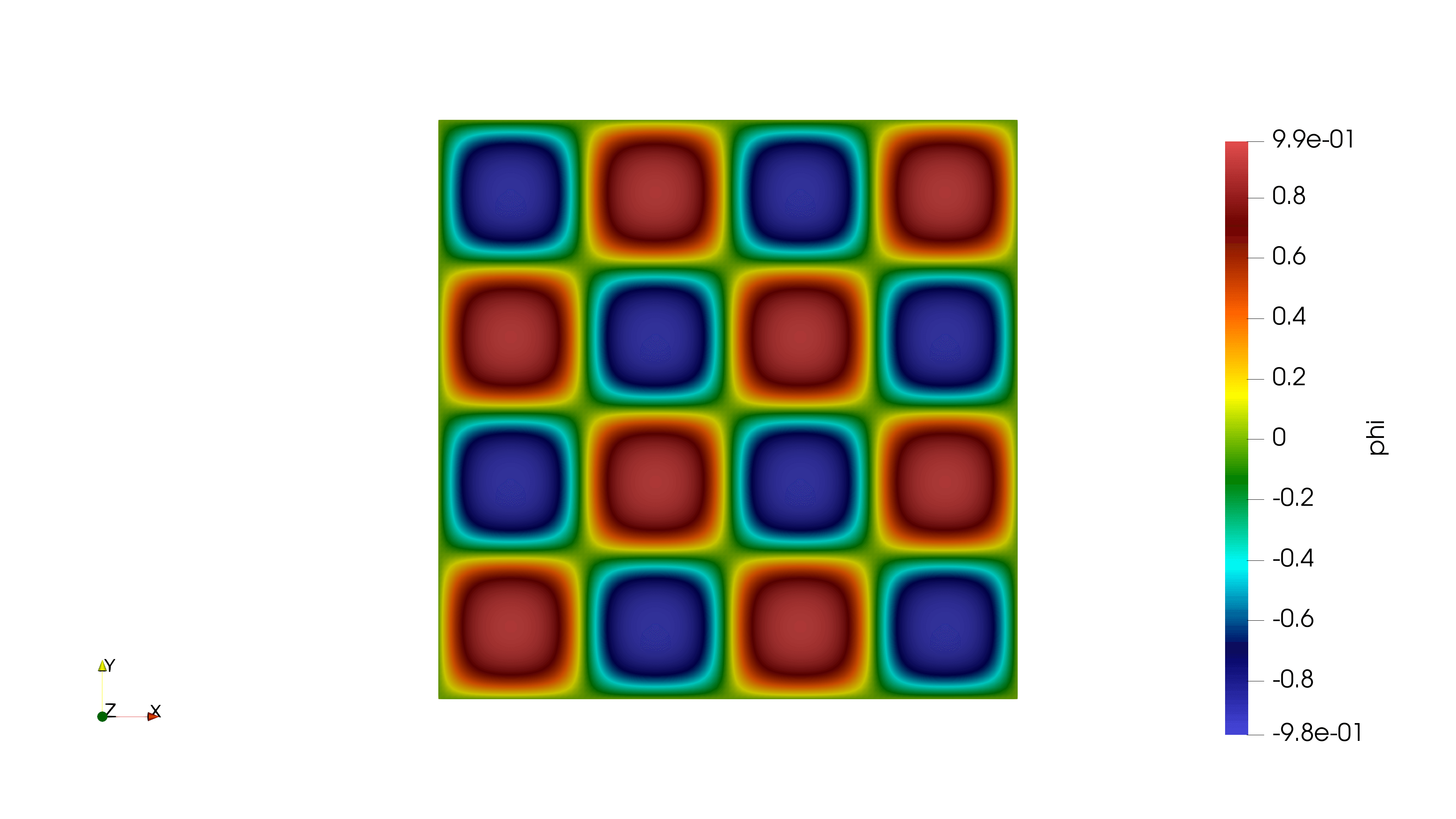} 
			&
			\includegraphics[trim={38cm 12.4cm 38.cm 8.5cm},clip,scale=0.055]{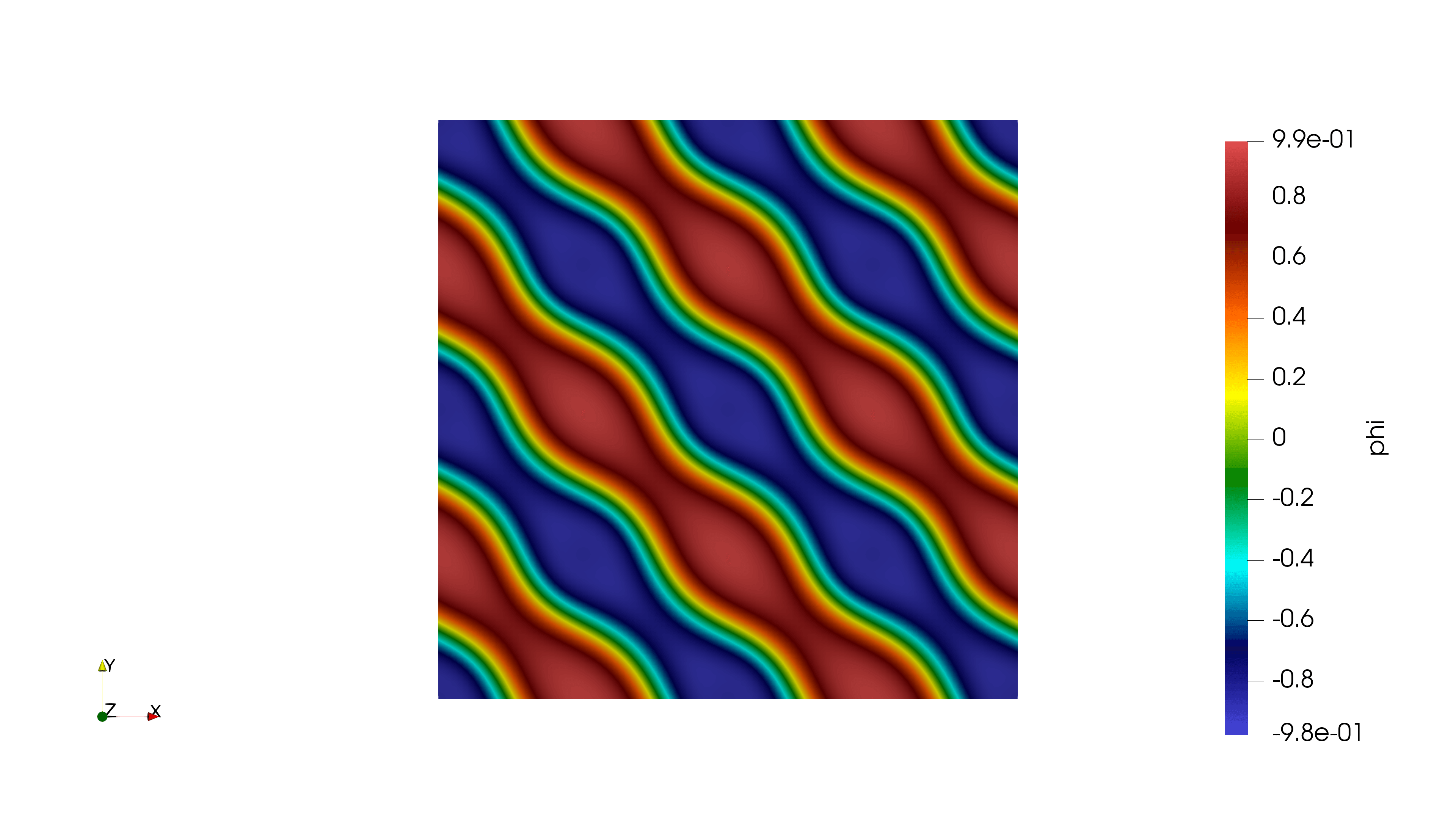}  \\
			%\multicolumn{4}{c}{\includegraphics[trim={17.0cm 67.5cm 16.0cm 0.2cm},clip,scale=0.15]{}} \\[-0.5em]
			\includegraphics[trim={38cm 12.4cm 38.cm 8.5cm},clip,scale=0.055]{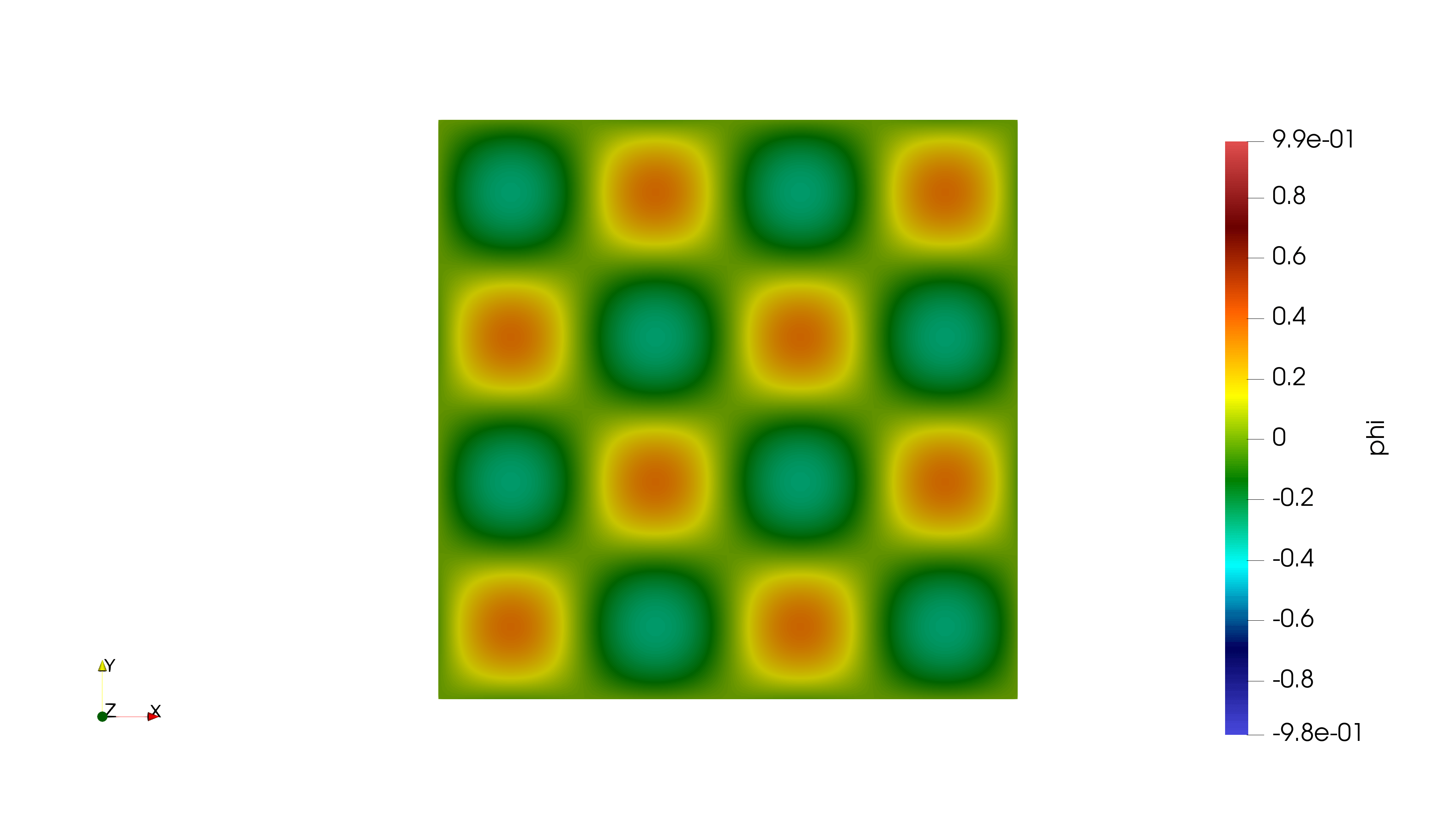} 
			&
			\includegraphics[trim={38cm 12.4cm 38.cm 8.5cm},clip,scale=0.055]{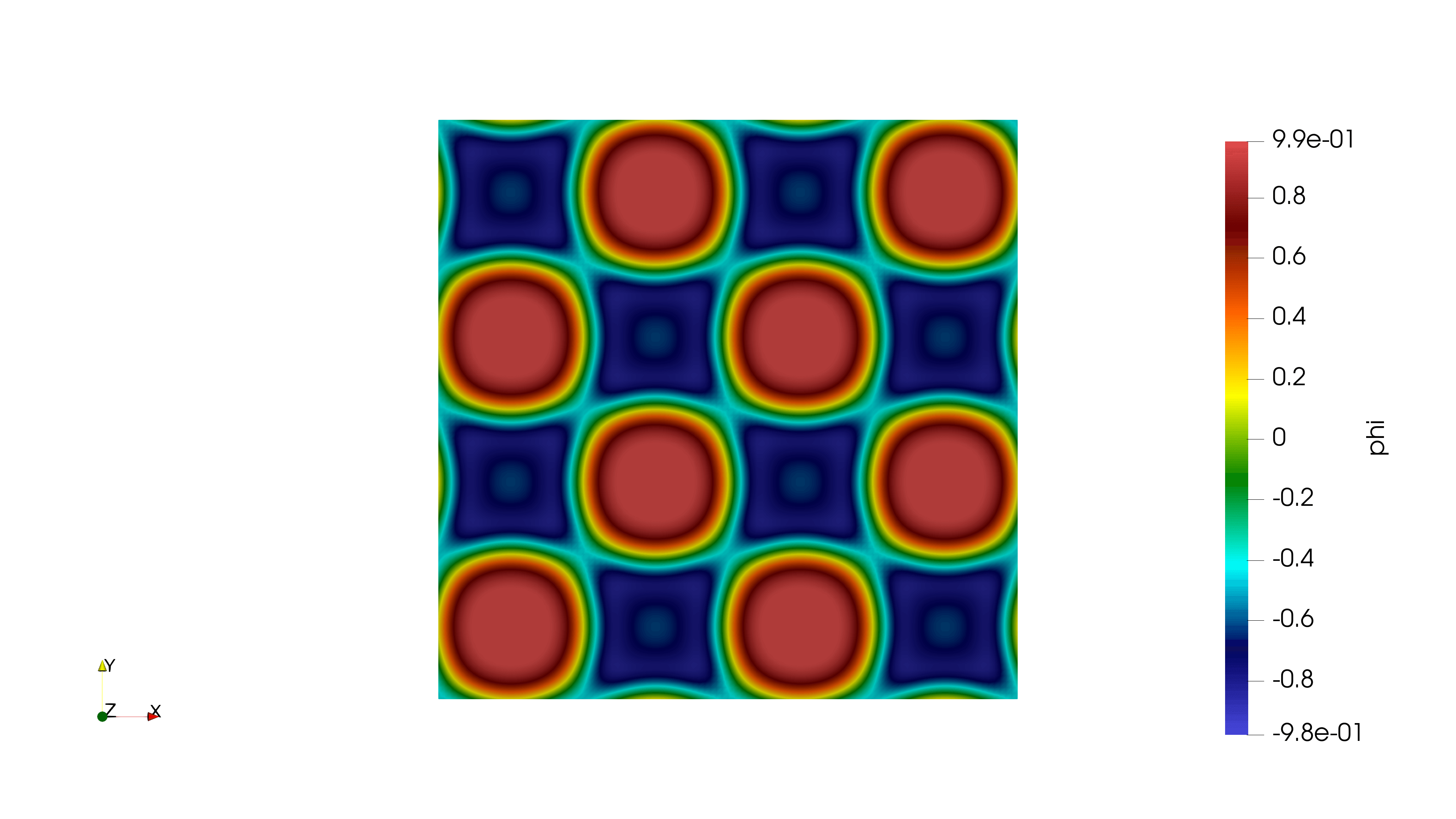}  
			&
			\includegraphics[trim={38cm 12.4cm 38.cm 8.5cm},clip,scale=0.055]{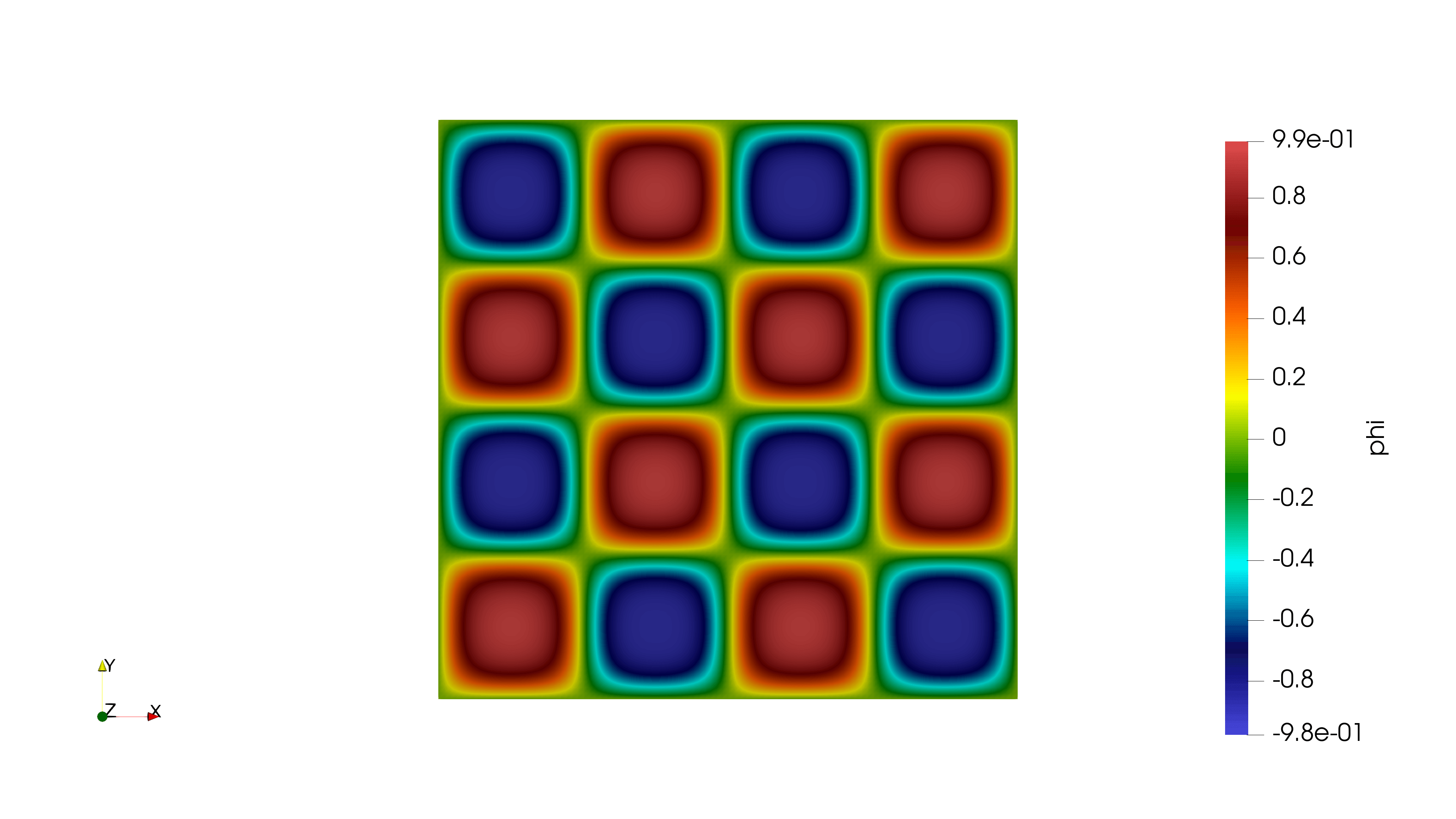} 
			&
			\includegraphics[trim={38cm 12.4cm 38.cm 8.5cm},clip,scale=0.055]{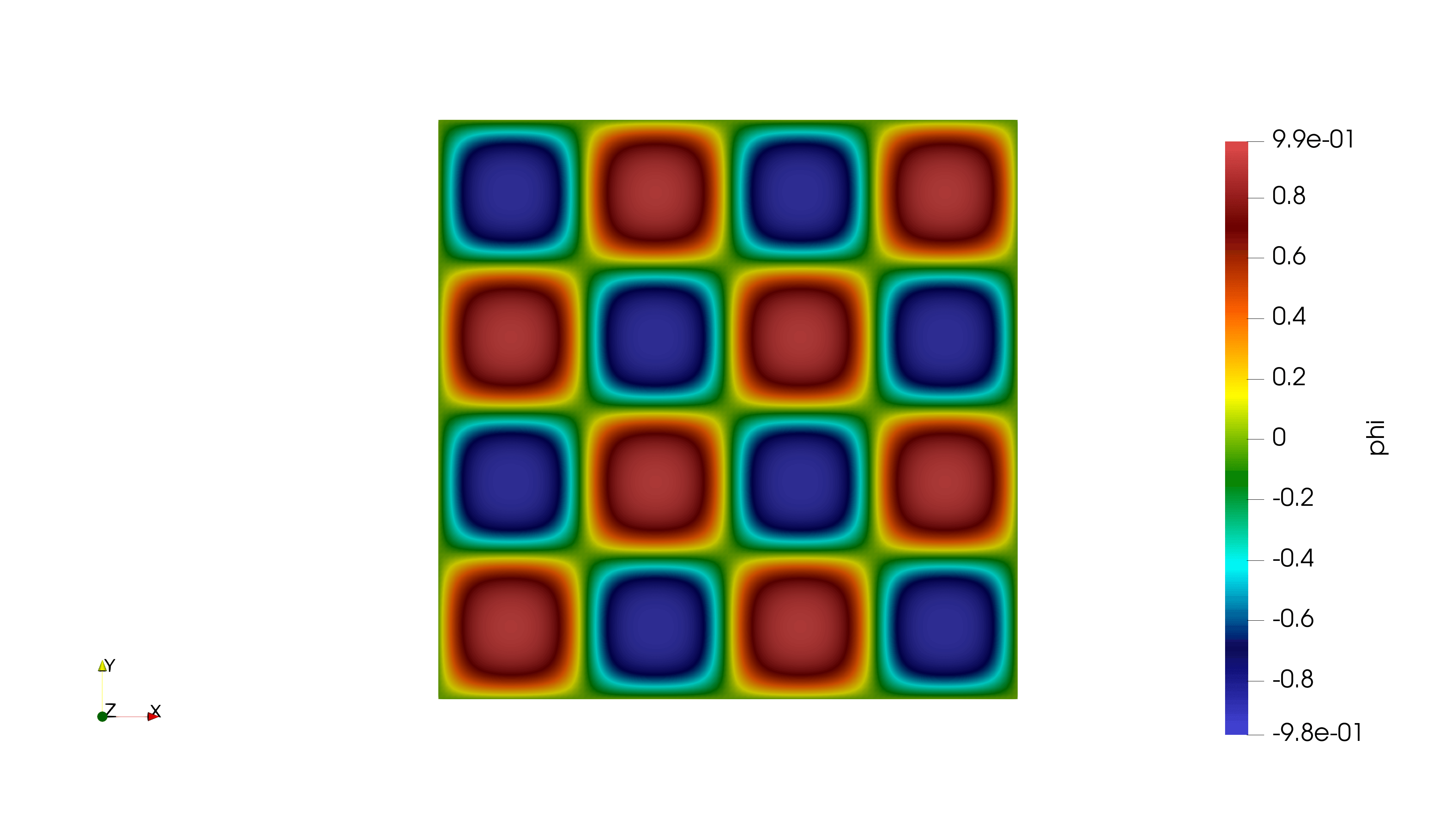}  \\
			%\multicolumn{4}{c}{\includegraphics[trim={17.0cm 67.5cm 16.0cm 0.2cm},clip,scale=0.15]{}} \\[-0.5em]
			\includegraphics[trim={38cm 12.4cm 38.cm 8.5cm},clip,scale=0.055]{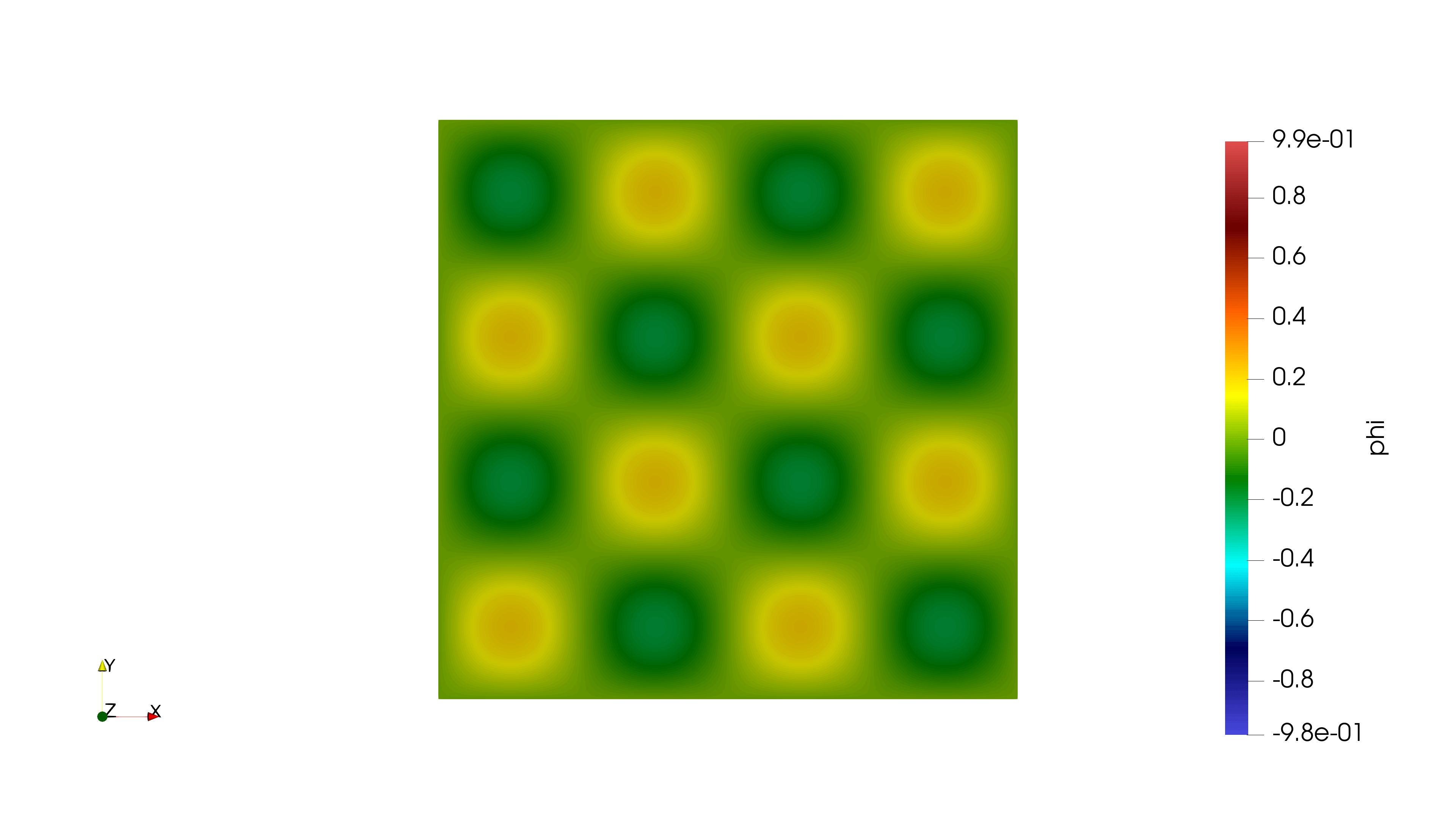} 
			&
			\includegraphics[trim={38cm 12.4cm 38.cm 8.5cm},clip,scale=0.055]{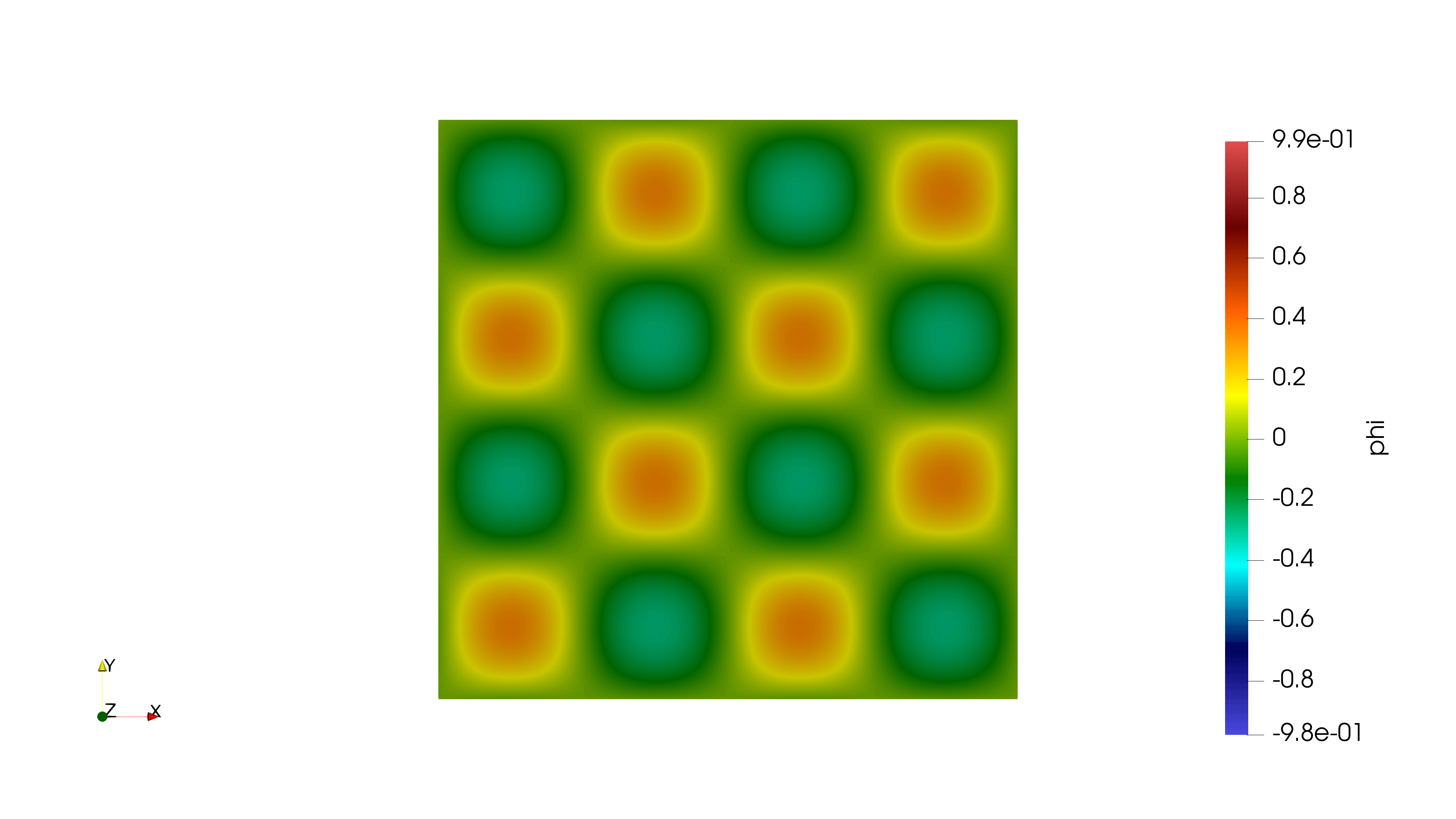}  
			&
			\includegraphics[trim={38cm 12.4cm 38.cm 8.5cm},clip,scale=0.055]{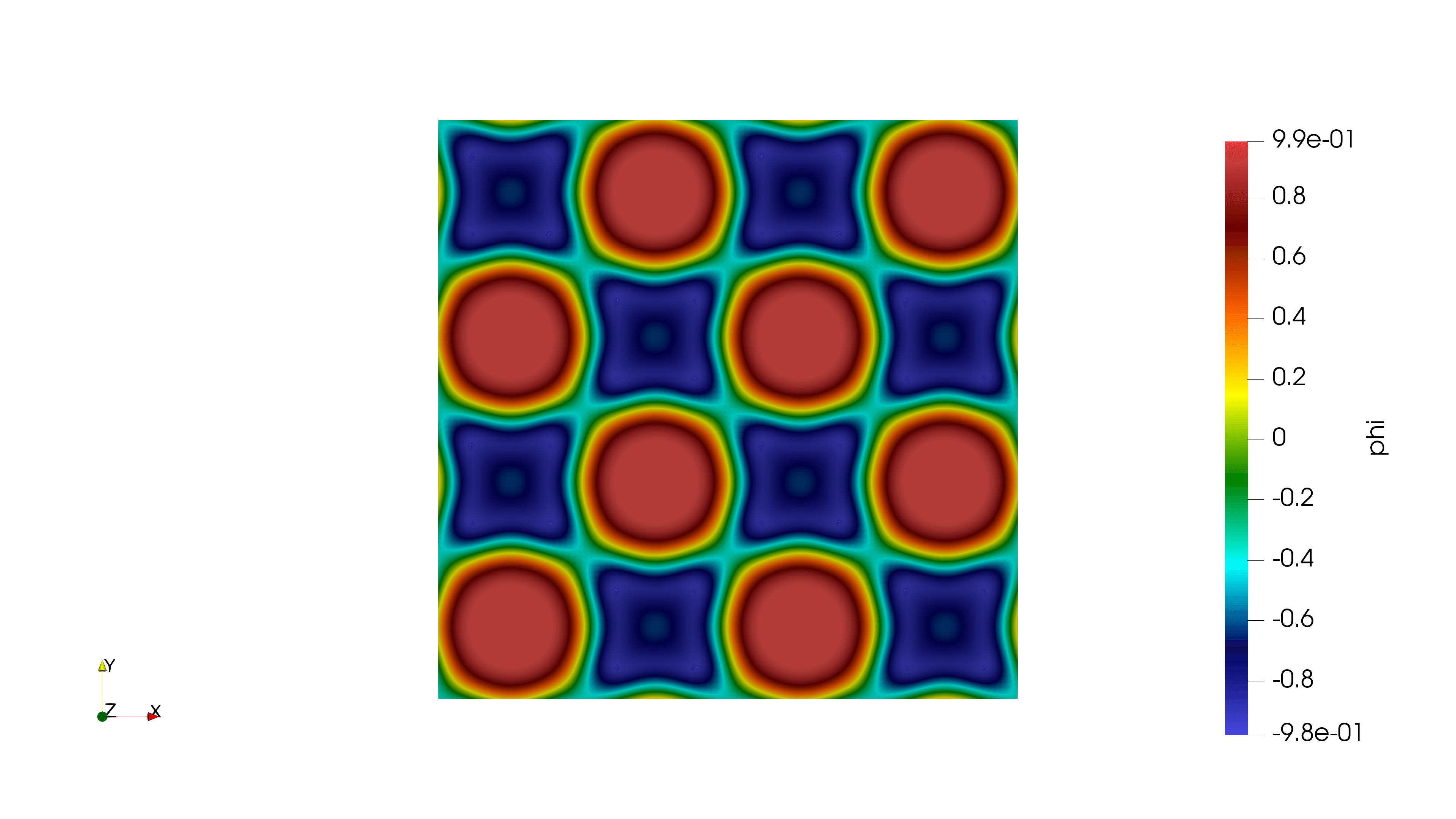} 
			&
			\includegraphics[trim={38cm 12.4cm 38.cm 8.5cm},clip,scale=0.055]{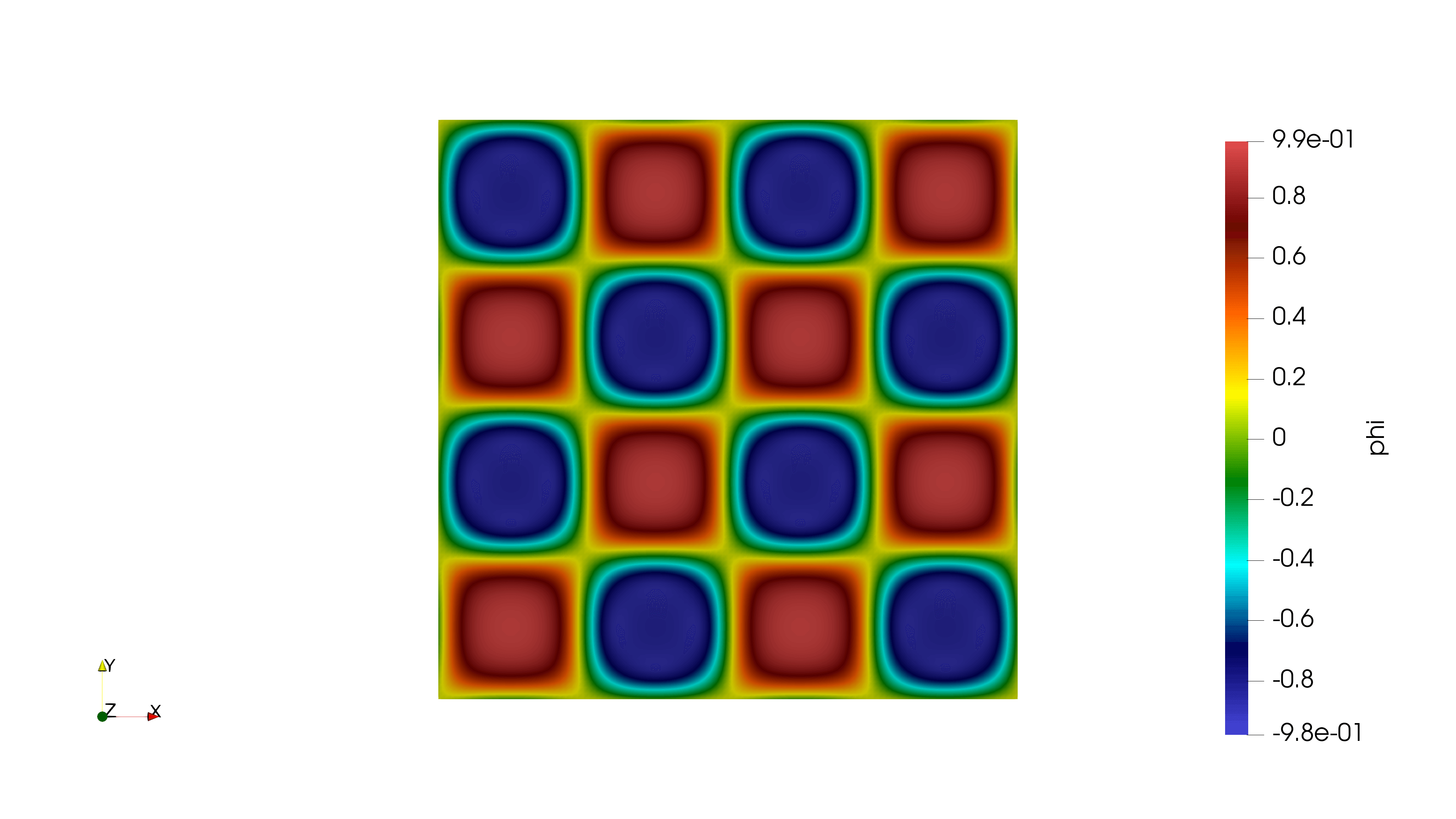} 
		\end{tabular}
		\caption{Phase separation: Snapshots of the volume fraction $\phi$ for the density ratios $\rho_1:\rho_2\in\{10^0:10^3,10^0:10^2,10^0:10^1,10^1:10^0,10^2:10^0,10^3:10^0\}$ from top to bottom at the times $\{0.1,0.3,1,2\}$ from left to right.  \label{fig:evophic}}
	\end{figure}

    \begin{figure}[!ht]
    \begin{subfigure}{0.49\textwidth}
    \centering
    \includegraphics[trim={4cm 18.4cm 5cm 2.5cm},clip,width=0.95\textwidth]{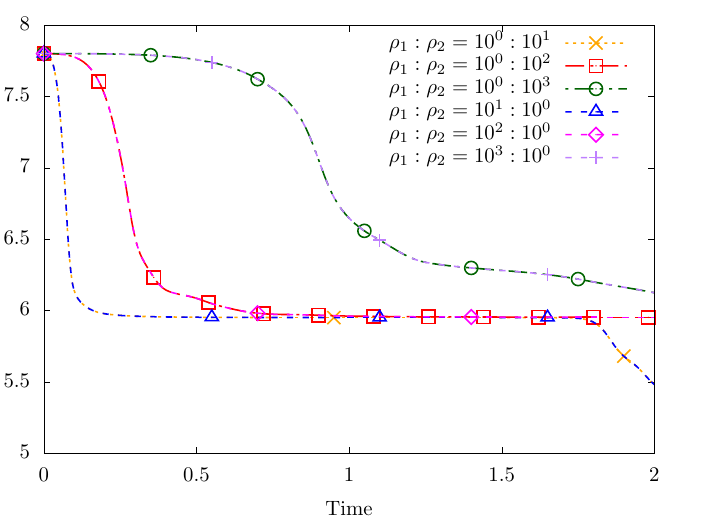}
    %\caption{Case 1.}
    \end{subfigure}
    \begin{subfigure}{0.49\textwidth}
    \centering
    \includegraphics[trim={4cm 9.5cm 5cm 11.5cm},clip,width=0.95\textwidth]{figures/pics_phase/gnu_phase.pdf}
    %\caption{Case 2.}
    \end{subfigure}
    \caption{Phase separation: Evolution of the energy $\widetilde{\mathcal{E}}$ and the mass conservation error for the density ratios $\rho_1:\rho_2\in\{10^0:10^3,10^0:10^2,10^0:10^1,10^1:10^0,10^2:10^0,10^3:10^0\}$. Energy difference between the symmetric results is of order $10^{-6}$.}
    \label{fig: meta}
    \end{figure}

\subsection{Convergence test}\label{subsec:conv}

For the convergence test we adapted the phase separation experiment for $\rho_1:\rho_2=1:100$ and consider 
\begin{equation}
   \phi_0(x,y)=0.2\sin(4\pi x)\sin(4\pi y), \qquad \vv_0(x,y) = 10^{-1}(\sin(\pi x)^2\sin(2\pi y),\sin(\pi y)^2\sin(2\pi x))^\top.
\end{equation}

The remaining parameters are unchanged.

\subsection*{Space Convergence:}
Since no exact solution is available we compute the error using a refined solution for fixed $\tau$. The error quantities we consider are
\begin{align*}
   \text{err}(\phi,h_k)&:= \max_{n\in\Itau}\norm{\phi^n_{{h_k}}-\phi^n_{h_{k+1}}}_{H^1(\Omega)}^2 , \qquad \text{err}(\vv,h_k):=\max_{n\in\Itau}\norm{\vv^n_{{h_k}}-\vv^n_{h_{k+1}}}_0^2 \\
   \text{err}(\mu+\alpha p,h_k)&:=\tau\sum_{n=1}^{n_T}\norm{(\mu^n_{h_k}+\alpha p^n_{h_k})-(\mu^n_{h_{k+1}}+\alpha p^n_{h_{k+1}})}_{H^1(\Omega)}^2 , \\
   \qquad \text{err}(\nabla\vv,h_k)&:=\tau\sum_{n=1}^{n_T}\norm{\vv_{h_k}^n-\vv^n_{h_{k+1}}}_{H^1(\Omega)}^2
\end{align*}

To this end we consider mesh refinements with the following mesh sizes $h_k \approx 2^{-1-k}$ for $k=0,\ldots,7.$  For the time step size we choose $\tau=10^{-3}$. We compute the experimental order of convergence (eoc) via $\textrm{eoc}_k=\log_2\left(\frac{\text{err}(a,h_{k-1})}{\text{err}(a,h_k)}\right)$ for the variables $a\in\{\phi,\u,\mu+\alpha p,\nabla\u\}.$ 

\begin{table}[htbp!]
	\centering
	\small
	\caption{$L^2(\Omega)$ errors and squared experimental order of convergence (eoc) up to time $T=0.1$} 
	\begin{tabular}{|c||c|c|c|c|c|c|c|c|}
		\hline
		$ k $ & $\text{err}(\phi,h_k)$ &  eoc & $\text{err}(\vv,h_k)$ & eoc & $\text{err}(\mu+\alpha p,h_k)$ &  eoc & $\text{err}(\nabla\vv,h_k)$ & eoc   \\
		\hline
        1 & $1.576\cdot 10^{-0}$    & --       & $1.984\cdot 10^{-3}$  & --       & $2.312\cdot 10^{-1}$ & --       & $2.164\cdot 10^{-2}$ & -- \\
        2 & $5.400\cdot 10^{-0}$    & $-$1.78  & $2.291\cdot 10^{-3}$  & $-$0.20  & $7.078\cdot 10^{-1}$ &$-$1.61   & $5.126\cdot 10^{-2}$ & $-$1.24\\
        3 & $2.718\cdot 10^{-0}$    & \phm0.99 & $1.117\cdot 10^{-3}$  & \phm1.04 & $5.699\cdot 10^{-1}$ & \phm0.31 & $7.496\cdot 10^{-2}$ & \phm0.55\\
        4 & $7.789\cdot 10^{-1} $   & \phm1.80 & $1.415\cdot 10^{-4}$  & \phm2.98 & $1.809\cdot 10^{-1}$ & \phm1.66 & $5.293\cdot 10^{-2}$ & \phm0.50\\
        5 & $1.986\cdot 10^{-1} $   & \phm1.97 & $9.139\cdot 10^{-6}$  & \phm3.95 & $4.619\cdot 10^{-2}$ & \phm1.97 & $1.467\cdot 10^{-2}$ & \phm1.85\\
        6 & $5.226\cdot 10^{-2} $   & \phm1.93 & $3.311\cdot 10^{-7}$  & \phm4.79 & $1.216\cdot 10^{-2}$ & \phm1.93 & $1.568\cdot 10^{-3}$ & \phm3.23\\
        7 & $1.330\cdot 10^{-2} $   & \phm1.97 & $1.503\cdot 10^{-8}$  & \phm4.46 & $3.104\cdot 10^{-3}$ & \phm1.97 & $9.934\cdot 10^{-5}$ & \phm3.98\\
		\hline
	\end{tabular}
	\label{table1}
\end{table}

Table \ref{table1} presents the errors and the (squared) experimental order of convergence in space. As expected, we observe first-order convergence for $(\phi,\mu+\alpha p)$ and second-order convergence for $\u$ (recall that the velocity is approximated by piecewise quadratic polynomials). The obtained rates are order optimal and the third order super convergence for the velocity in $L^\infty(0,T;L^2(\Omega))$ is not observed.

\subsection*{Time Convergence:}
Since no exact solution is available we compute the error using the reference solution at the finest space resolution for fixed $h$. We denote the solutions on different time resolutions by $(\phi_{\tau_k},\mu_{\tau_k},\u_{\tau_k},p_{\tau_k}).$ The error quantities we consider are
\begin{align*}
   \text{err}(\phi,\tau_k)&:= \max_{n\in\mathcal{I}_{\tau_k}}\norm{\phi^n_{\tau_k}-\phi^n_{\tau_{k+1}}}_{H^1(\Omega)}^2 , \qquad \text{err}(\u,\tau_k):=\max_{n\in\mathcal{I}_{\tau_k}}\norm{\vv^n_{{\tau_k}}-\vv^n_{\tau_{k+1}}}_0^2 \\
   \text{err}(\mu+\alpha p,\tau_k)&:=\tau_k\sum_{n=1}^{n_T}\norm{(\mu^n_{\tau_k}+\alpha p^n_{\tau_k})-(\bar\mu^n_{\tau_{k+1}}+\alpha \bar p^n_{\tau_{k+1}})}_{H^1(\Omega)}^2 , \\
   \qquad \text{err}(\nabla\vv,\tau_k)&:=\tau_k\sum_{n=1}^{n_T}\norm{\vv_{\tau_k}^n-\bar\vv^n_{\tau_{k+1}}}_{H^1(\Omega)}^2,
\end{align*}
where $\bar g_{\tau_{k+1}}^n:=\frac{1}{2}(g_{\tau_{k+1}}^{n}+g_{\tau_{k+1}}^{n-1/2}).$
To this end we consider time step refinements with the following sizes $\tau_k =10^{-4}\cdot2^{-k-1}$ for $k=0,\ldots,5.$  For the time step size we choose $h\approx 10^{-2}$. We compute the experimental order of convergence (eoc) via $\textrm{eoc}_k=\log_2\left(\frac{\text{err}(a,\tau_{k-1})}{\text{err}(a,\tau_k)}\right)$ for the variables $a\in\{\phi,\u,\mu+\alpha p,\nabla\u\}.$ 

\begin{table}[htbp!]
	\centering
	\small
	\caption{$L^2(\Omega)$ errors and squared experimental order of convergence (eoc) up to time $T=0.01$} 
	\begin{tabular}{|c||c|c|c|c|c|c|c|c|}
		\hline
		$ k $ & $\text{err}(\phi,\tau_k)$ &  eoc & $\text{err}(\u,\tau_k)$ & eoc & $\text{err}(\mu+\alpha p,\tau_k)$ &  eoc & $\text{err}(\nabla\u,\tau_k)$ & eoc   \\
		\hline
        1 & $1.558\cdot 10^{-7}$    & --   & $1.208\cdot 10^{-10}$  & --   & $1.145\cdot 10^{-9}$  & --   & $2.758\cdot 10^{-9}$   & -- \\
        2 & $4.653\cdot 10^{-8}$    & 1.74 & $3.122\cdot 10^{-11}$  & 1.95 & $2.919\cdot 10^{-10}$ & 1.97 & $7.696\cdot 10^{-10}$  & 1.84\\
        3 & $1.275\cdot 10^{-8}$    & 1.87 & $7.951\cdot 10^{-12}$  & 1.97 & $7.376\cdot 10^{-11}$ & 1.98 & $2.034\cdot 10^{-10}$  & 1.92\\
        4 & $3.339\cdot 10^{-9} $   & 1.93 & $2.007\cdot 10^{-12}$  & 1.99 & $1.855\cdot 10^{-11}$ & 1.99 & $5.231\cdot 10^{-11}$  & 1.96\\
        5 & $8.544\cdot 10^{-10} $  & 1.97 & $5.043\cdot 10^{-13}$  & 1.99 & $4.654\cdot 10^{-12}$ & 2.00 & $1.326\cdot 10^{-11}$ & 1.98\\
		\hline
	\end{tabular}
	\label{table2}
\end{table}

Table \ref{table2} presents the errors and the (squared) experimental order of convergence in time. As expected, we observe first-order convergence for all errors quantities. The obtained rates are order optimal.

\subsection{Rising bubble test cases}\label{subsec:risingbubble}
In this benchmark problem, a circular bubble of fluid 2 (the lighter phase) with an initial diameter of $D_0 = 2R_0 = 0.5$ is positioned at $(0.5, 0.5)$ within a rectangular domain $[0,1] \times [0,2]$, surrounded by fluid 1 (the heavier phase) \cite{hysing2009quantitative}. The initial distribution of the phase field is prescribed by:

\begin{align}\label{eq: init phi 2D}
  \phi^h_0(\mathbf{x}) = \tanh{\dfrac{\sqrt{(x-0.5)^2+(y-0.5)^2}-R_0}{\varepsilon\sqrt{2}}}.
\end{align}

Boundary conditions are applied as follows: no-penetration ($\mathbf{v} \cdot \mathbf{n} = 0$) on the vertical boundaries (left and right) and no-slip ($\mathbf{v} = 0$) on the horizontal boundaries (top and bottom). A schematic representation of the problem setup is provided in \cref{fig:sketch 2D rising bubble problem}.

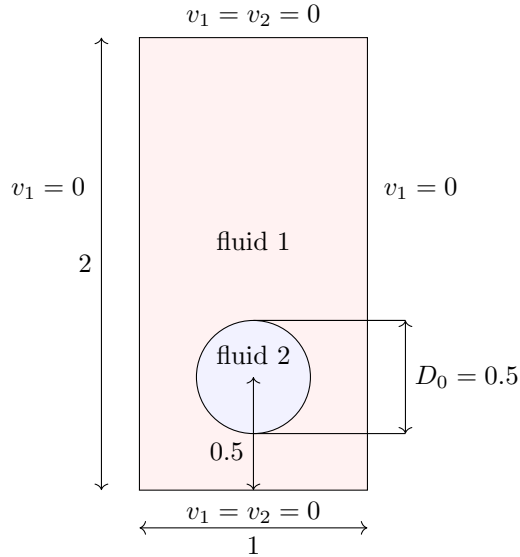
\begin{figure}[h]
\begin{center}
\begin{tikzpicture}
    % Draw the gray background rectangle
    \fill[fill=red!5,draw=black] (0, 0) rectangle (3, 6);
    
    % Draw the white circle with a black border
    \draw[fill=blue!5,draw=black] (1.5, 1.5) circle (0.75);
    
    % Add arrows to indicate dimensions
    \draw[<->] (0, -0.5) -- (3, -0.5) node[midway,below] {$1$};
    \draw[<->] (-0.5, 0) -- (-0.5, 6) node[midway,left] {$2$};
    \draw[<->] (3.5, 0.75) -- (3.5, 2.25) node[midway,right] {$D_0 = 0.5$};
    \draw[-] (1.5, 0.75) -- (3.5, 0.75);
    \draw[-] (1.5, 2.25) -- (3.5, 2.25);
    \draw[<->] (1.5, 0.0) -- (1.5, 1.5) node[midway,below left] {$0.5$};
    \draw[-] (1.5, 2.25) -- (3.5, 2.25);
    \node at (1.5, 6.3) {$v_1 = v_2 = 0$}; 
    \node at (1.5, -0.3) {$v_1 = v_2 = 0$};
    \node at (-1.2, 4.0) {$v_1 = 0$}; 
    \node at (3.7, 4.0) {$v_1 = 0$}; 
    \node at (1.5, 1.8) {fluid 2};
    \node at (1.5, 3.3) {fluid 1}; 
\end{tikzpicture}
    \caption{Rising bubble: Schematic representation of the problem setup}
    \label{fig:sketch 2D rising bubble problem}
\end{center}
\end{figure}

Simulations were conducted on a uniform rectangular mesh with element sizes $h = 1/32, 1/64, 1/128 $. The time step size is set as $\Delta t_n = 0.128 h $, while $\varepsilon = 0.64 h$. We choose $\gamma = \tilde{\sigma} \varepsilon$ and $\beta = \tilde{\sigma}/\varepsilon$ so that:
\begin{align}
    \Psi = \frac{\tilde{\sigma}}{4\epsilon}(1-\phi^2)^2 + \frac{\tilde{\sigma} \epsilon}{2}|\nabla\phi|^2, 
\end{align}
with $\tilde{\sigma} = 3 \sigma/(2\sqrt{2})$. Additionally, we select the mobility as $m = \bar{m} |1- (\phi_h^{n+1})^2|$ with $\bar{m} =0.1\varepsilon^2$. The term $\varepsilon^2$ is based on the scaling argument provided by Magaletti et al. \cite{magaletti2013sharp}. The benchmark problem consists of two cases, each defined by different parameter values, as listed in \cref{table: parameters 2D RB cases}. We refer to \ref{subsec:dim less} for the definitions of the dimensionless quantities.

\begin{table}[htbp]
\centering
\begin{tabularx}{\textwidth}{XXXXXXXXX}
%\hline\\[-6pt]
Case & \hspace{0.1cm} $\rho_1$ & \hspace{0.1cm} $\rho_2$ & $\mu_1$ & $\mu_2$ & \hspace{0.1cm} $\sigma$ & \hspace{0.1cm} $g$ & $\mathbb{A}{\rm r}$ & $\mathbb{E}{\rm o}$ \\[4pt]
\hline\\[-6pt]
\hspace{0.5cm}1 & $1000$ & $100$ & $10$ & $1$   & $24.5$ & $0.98$ & $35$ & \hspace{0.05cm} $10$   \\[6pt]
\hspace{0.5cm}2 & $1000$ & \hspace{0.1cm} $1$   & \hspace{0.1cm} $1$  & $0.1$ &$1.96$  & $0.98$ & $35$ & $125$  \\[6pt]
\hline
\end{tabularx}
\caption{Parameters for the two-dimensional rising bubble cases.}
\label{table: parameters 2D RB cases}
\end{table}
%-0.001 1.001 -0.001 2.001

\cref{fig: case 1 phi + contours,fig: case 2 phi + contours} display the zero level set of the phase-field for cases 1 and 2, respectively. In case 1, the bubble undergoes minimal deformation, whereas case 2 exhibits significant shape changes. In both cases, the solutions obtained on the two finest meshes are nearly indistinguishable. We verify conservation of the phase field and energy dissipation in \cref{fig: Mass,fig: Energy}, respectively.

\begin{figure}[!ht]
\captionsetup[subfigure]{justification=centering}
\begin{subfigure}{0.49\textwidth}
\centering
\includegraphics[height=1\textwidth]{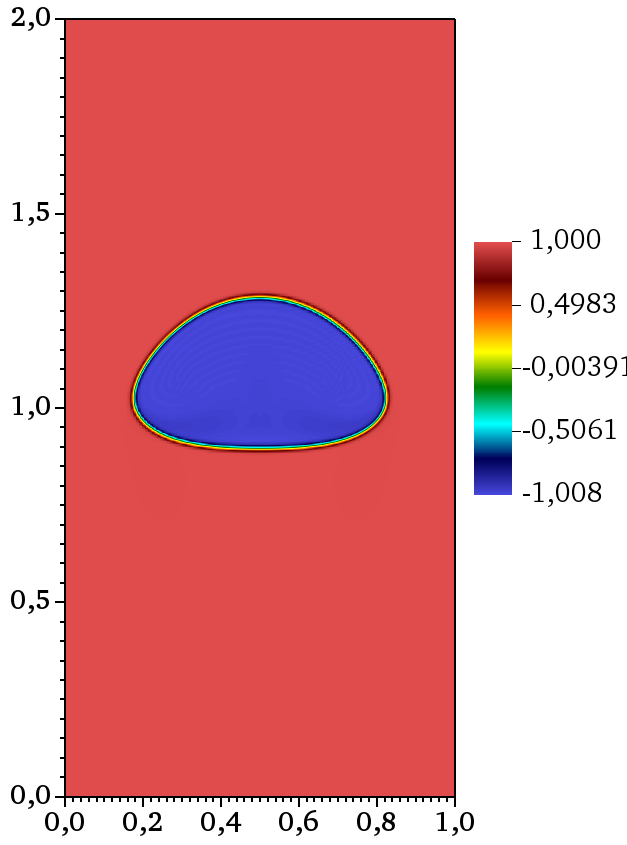}
\caption{$h = 1/128$\\{\color{white}.}}
\end{subfigure}
\begin{subfigure}{0.49\textwidth}
\centering
\includegraphics[height=1\textwidth]{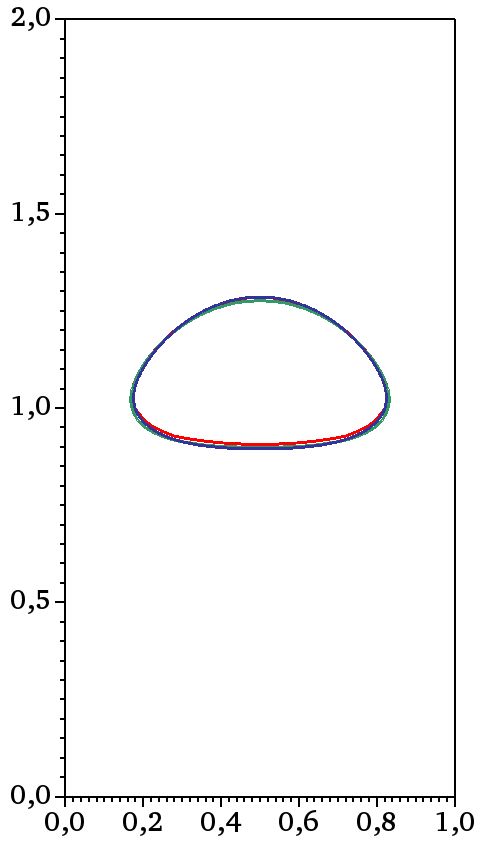}
\caption{$h = 1/32, 1/64, 1/128$\\  in red, green and blue (resp).}
\end{subfigure}
\caption{Case 1. Visualization of solution at final time $t=3$. (a) Phase field, (b) zero level set of the phase-field.}
\label{fig: case 1 phi + contours}
\end{figure}

\begin{figure}[!ht]
\captionsetup[subfigure]{justification=centering}
\begin{subfigure}{0.49\textwidth}
\centering
\includegraphics[height=1\textwidth]{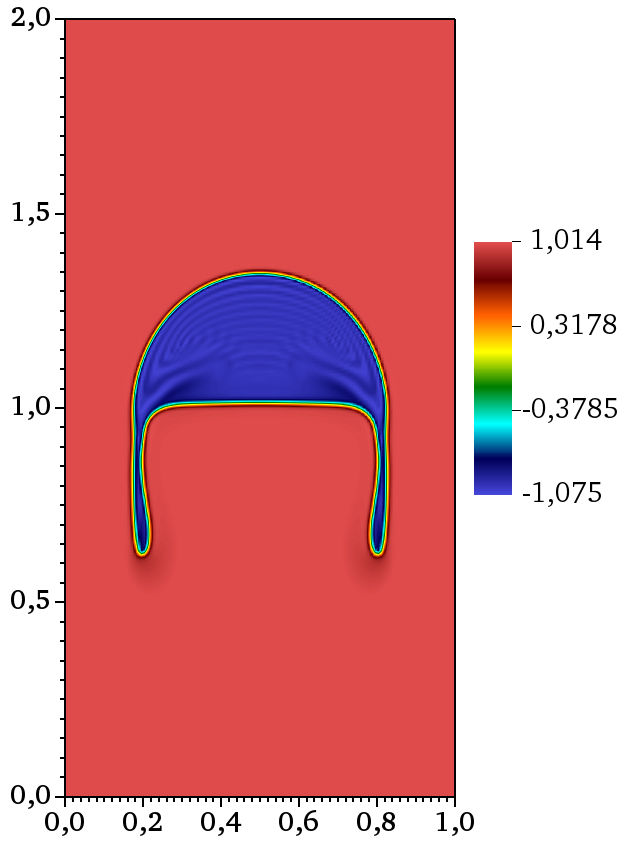}
\caption{$h = 1/128$\\{\color{white}.}}
\end{subfigure}
\begin{subfigure}{0.49\textwidth}
\centering
\includegraphics[height=1\textwidth]{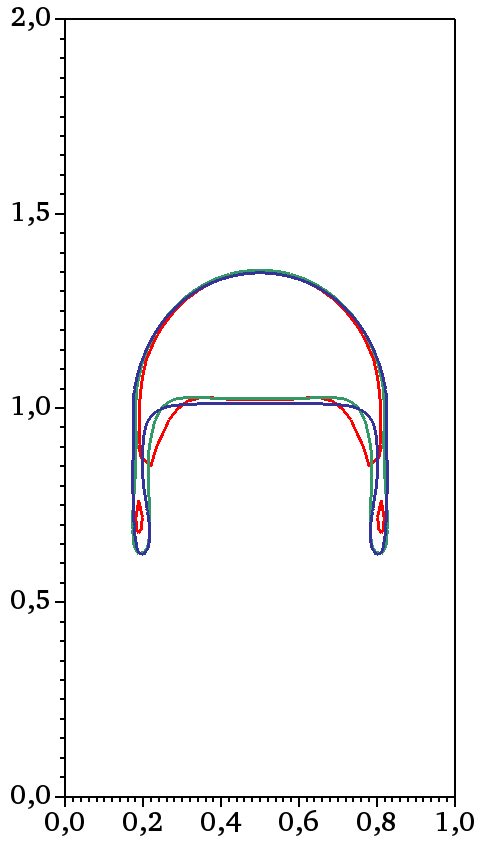}
\caption{$h = 1/32, 1/64, 1/128$\\  in red, green and blue (resp).}
\end{subfigure}
\caption{Case 2. Visualization of solution at final time $t=3$. (a) Phase field, (b) zero level set of the phase-field.}
\label{fig: case 2 phi + contours}
\end{figure}
%Window: 482x846
%Legend position: 0.63 0.72

\begin{figure}[!ht]
\begin{subfigure}{0.49\textwidth}
\centering
\includegraphics[width=0.95\textwidth]{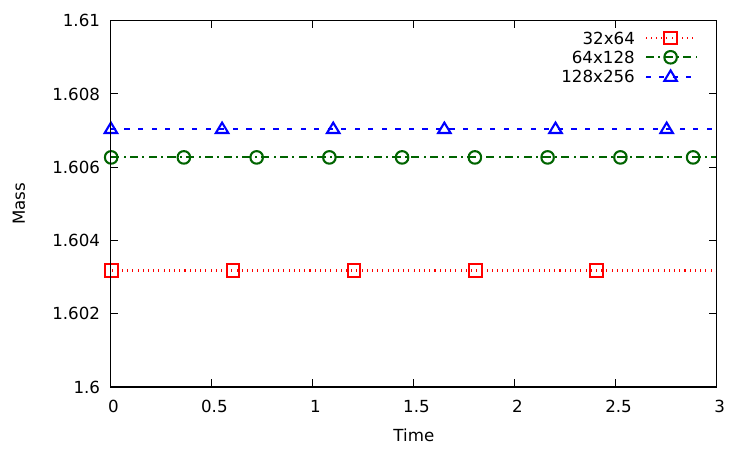}
\caption{Mass evolution Case 1.}
\end{subfigure}
\begin{subfigure}{0.49\textwidth}
\centering
\includegraphics[width=0.95\textwidth]{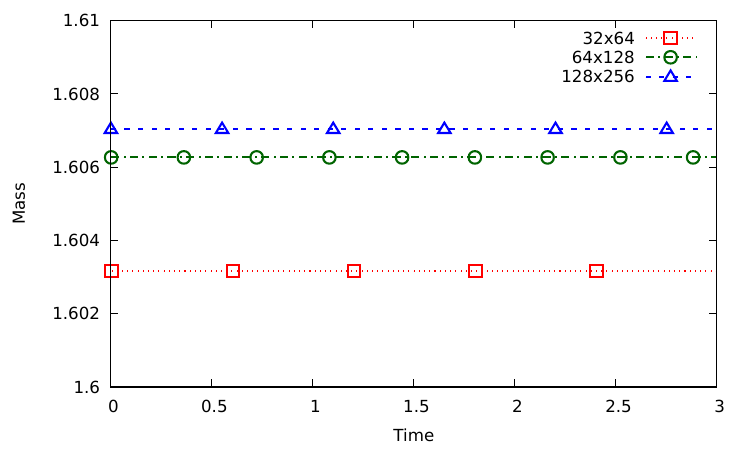}
\caption{Mass evolution Case 2.}
\end{subfigure}
\centering
\begin{subfigure}{0.49\textwidth}
\centering
\includegraphics[width=0.95\textwidth]{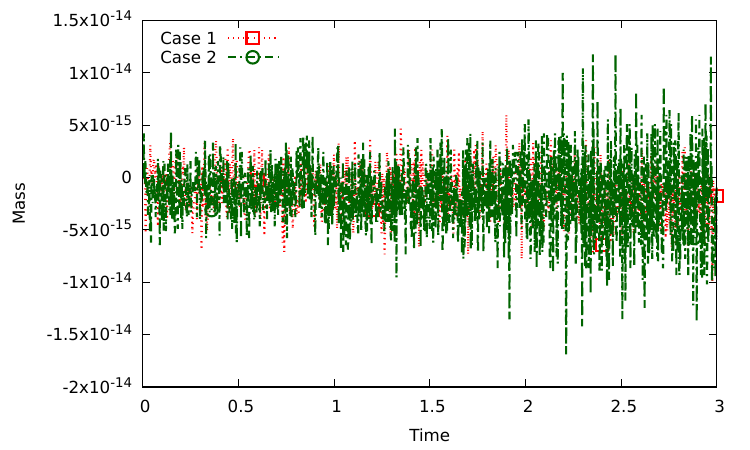}
\caption{Mass evolution error. Cases 1 and 2.}
\end{subfigure}
\caption{Evolution of the mass conservation (error); (a) and (b) for $h = 1/32, 1/64, 1/128$, (c) Zoom for $h = 1/128$.}
\label{fig: Mass}
\end{figure}

\begin{figure}[!ht]
\begin{subfigure}{0.49\textwidth}
\centering
\includegraphics[width=0.95\textwidth]{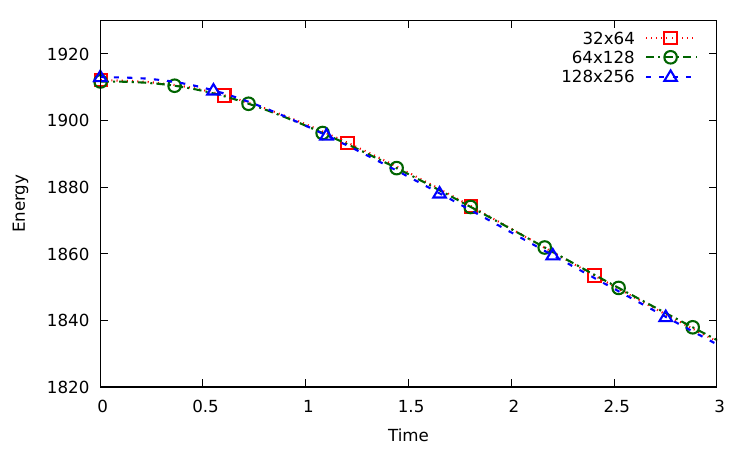}
\caption{Case 1.}
\end{subfigure}
\begin{subfigure}{0.49\textwidth}
\centering
\includegraphics[width=0.95\textwidth]{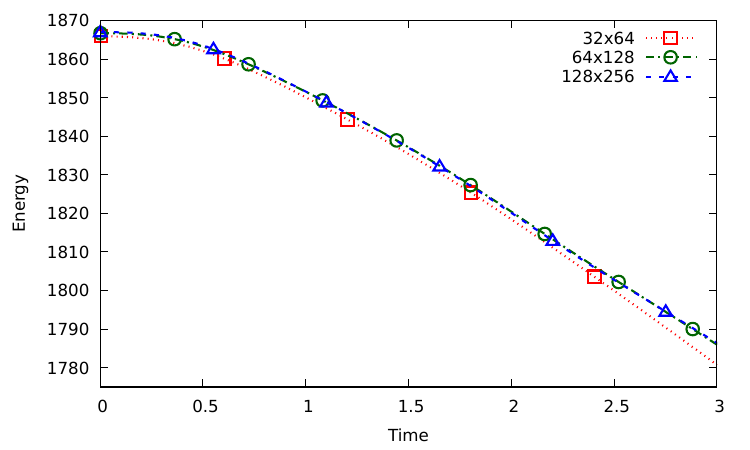}
\caption{Case 2.}
\end{subfigure}
\caption{Evolution of the energy $E$ for different mesh widths $h = 1/32, 1/64, 1/128$.}
\label{fig: Energy}
\end{figure}

Next, to facilitate a quantitative comparison with reference results from the literature, we consider two key metrics: the center of mass ($y_b$) and the rise velocity ($v_b$), defined as:
    \begin{align}
        y_b := \dfrac{\int_{\phi < 0} y ~{\rm d}x}{\int_{\phi < 0} ~ {\rm d}x},\qquad
        v_b := \dfrac{\int_{\phi < 0} v_2 ~{\rm d}x}{\int_{\phi < 0} ~ {\rm d}x}.
    \end{align}

In \cref{fig: case 1 CoM,fig: case 1 RV,fig: case 2 CoM,fig: case 2 RV}, we present the center of mass and rise velocity for both test cases, considering different mesh sizes. These results are compared with computational data from the literature, including simulations performed using the TP2D, FreeLIFE, and MooNMD codes \cite{hysing2009quantitative}, as well as the NSCH models proposed by Abels et al. \cite{abels2012thermodynamically}, Boyer \cite{boyer2002theoretical}, and Ding et al. \cite{ding2007diffuse}. These NSCH computations were carried out by Aland and Voigt \cite{aland2012benchmark}. In addition, we compare with the volume-averaged velocity formulation of the NSCH model \cite{ten2024divergence}. In the figures we denote these as `$\text{NSCH}_{\text{vol}}$' and we denote the current computations by `$\text{NSCH}_{\text{mass}}$'.

For both cases, the center of mass shows good agreement with the reference data. However, in case 2, significant deviations in the rise velocity are observed for $t>1.5$. In this regime, our results align closely with the NSCH computations of Aland and Voigt \cite{aland2012benchmark}, but differ from those obtained using the TP2D, FreeLIFE, and MooNMD codes.

\begin{figure}[!ht]
\begin{subfigure}{0.49\textwidth}
\centering
\includegraphics[width=0.95\textwidth]{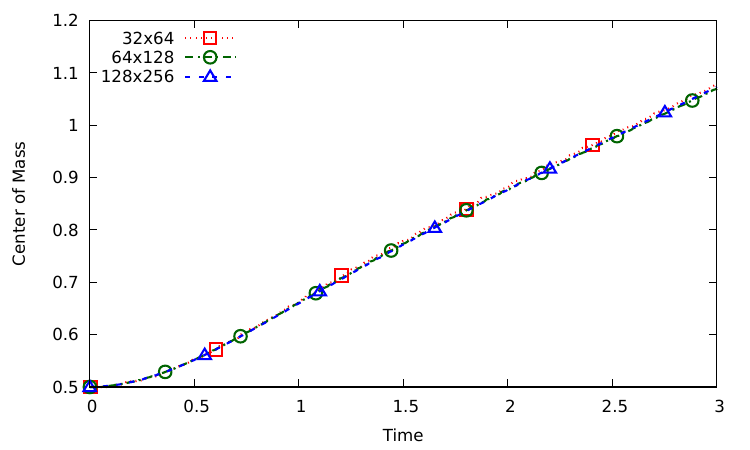}
\caption{$h = 1/32, 1/64, 1/128$.}
\end{subfigure}
\begin{subfigure}{0.49\textwidth}
\centering
\includegraphics[width=0.95\textwidth]{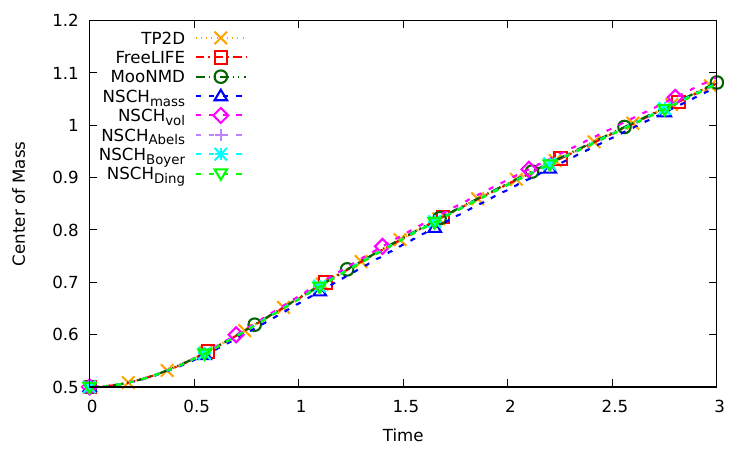}
\caption{Comparison with data from the literature.}
\end{subfigure}
\caption{Case 1. Center of mass (a) for different mesh sizes, and (b) a comparison of the finest mesh results to reference data.}
\label{fig: case 1 CoM}
\end{figure}

\begin{figure}[!ht]
\begin{subfigure}{0.49\textwidth}
\centering
\includegraphics[width=0.95\textwidth]{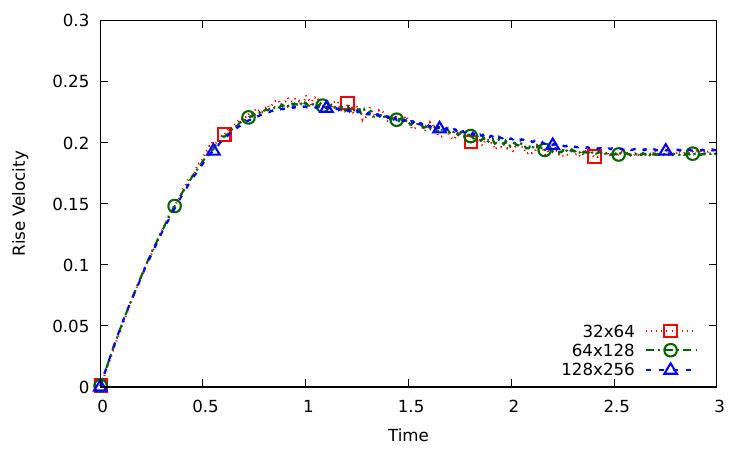}
\caption{$h = 1/32, 1/64, 1/128$.}
\end{subfigure}
\begin{subfigure}{0.49\textwidth}
\centering
\includegraphics[width=0.95\textwidth]{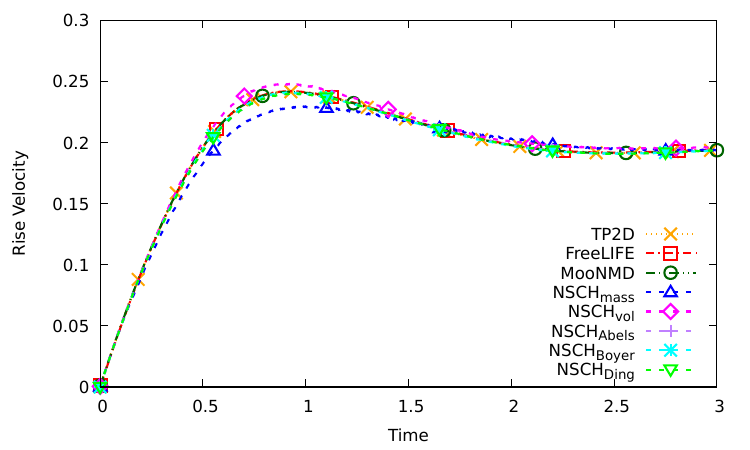}
\caption{Comparison with data from the literature.}
\end{subfigure}
\caption{Case 1. Rise velocity (a) for different mesh sizes, and (b) a comparison of the finest mesh results to reference data.}
\label{fig: case 1 RV}
\end{figure}

\begin{figure}[!ht]
\begin{subfigure}{0.49\textwidth}
\centering
\includegraphics[width=0.95\textwidth]{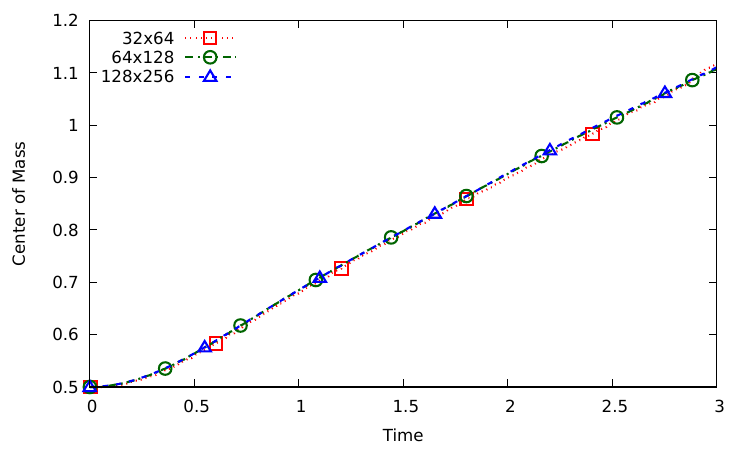}
\caption{$h = 1/16, 1/32, 1/64, 1/128$.}
\end{subfigure}
\begin{subfigure}{0.49\textwidth}
\centering
\includegraphics[width=0.95\textwidth]{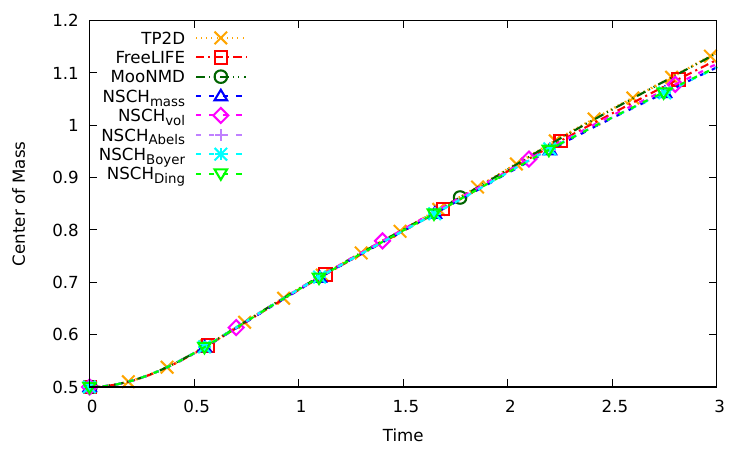}
\caption{Comparison with data from the literature.}
\end{subfigure}
\caption{Case 2. Center of mass (a) for different mesh sizes, and (b) a comparison of the finest mesh results to reference data.}
\label{fig: case 2 CoM}
\end{figure}

\begin{figure}[!ht]
\begin{subfigure}{0.49\textwidth}
\centering
\includegraphics[width=0.95\textwidth]{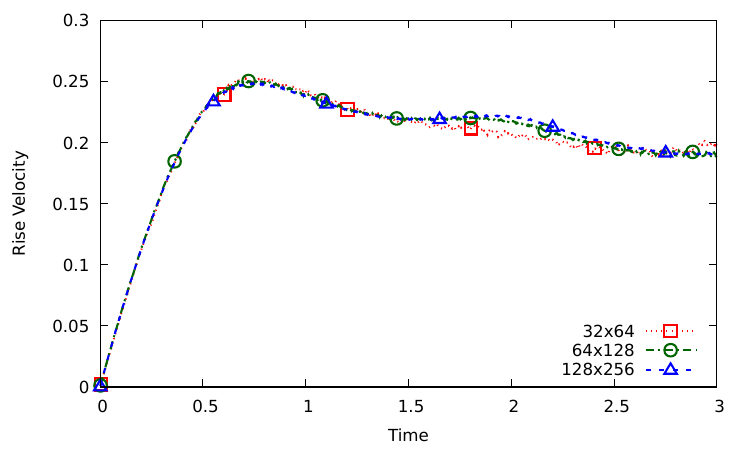}
\caption{$h = 1/32, 1/64, 1/128$.}
\end{subfigure}
\begin{subfigure}{0.49\textwidth}
\centering
\includegraphics[width=0.95\textwidth]{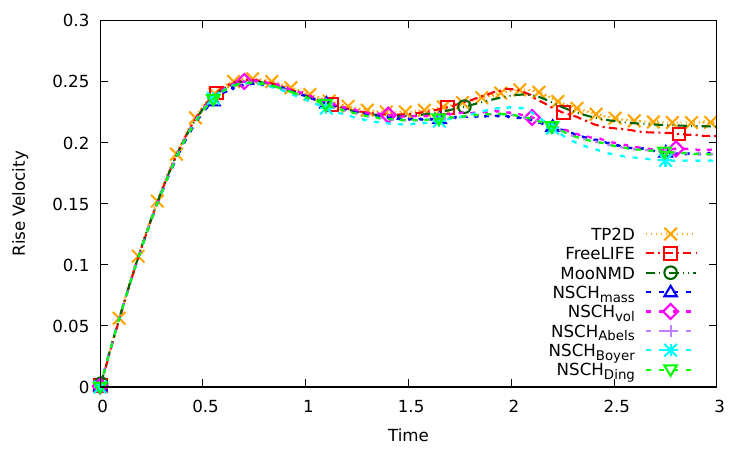}
\caption{Comparison with data from the literature.}
\end{subfigure}
\caption{Case 2. Rise velocity (a) for different mesh sizes, and (b) a comparison of the finest mesh results to reference data.}
\label{fig: case 2 RV}
\end{figure}
\newpage
\section{Summary and outlook}\label{sec:conclusion and outlook}
In this work, we developed a fully-discrete, unconditionally, energy-stable method for the two-phase Navier-Stokes Cahn-Hilliard (NSCH) mixture model with non-matching densities. By incorporating positive extensions of the density directly into the kinetic energy, we ensured provable stability, addressing a key challenge in the numerical discretization of NSCH models, particularly for large density ratios. The proposed method is based on an equivalent reformulation of the NSCH model using mass-averaged velocity and volume fraction-based order parameters, which simplifies implementation while maintaining theoretical consistency. Numerical results confirm the accuracy, robustness, and stability of the scheme in capturing complex two-phase flow dynamics.

While this work provides a consistent and computationally efficient framework for two-phase flows, several directions remain for future research. We mention three of these. First, extending the method to N-phase flows \cite{eikelder2024unified,eikelder2025compressible} would allow for the simulation of more complex multiphase systems, requiring appropriate generalizations of the energy-stable formulation. Second, alternative free-energy potentials, such as the Flory-Huggins model, could be incorporated to better capture thermodynamically consistent phase separation phenomena. Third, the computation of high-Reynolds number flows constitute an important research direction. This demand the development of novel (thermodynamically-consistent) stabilized methods, such as \cite{ten2018correct}.

\section*{Acknowledgments}
The authors wish to thank Sebastian Aland for sharing the simulation data of the Navier-Stokes Cahn-Hilliard models. A. Brunk~gratefully acknowledges the support of the German Science Foundation via TRR~146 -- project number 233630050 --, SPP2256 ``Variational Methods for Predicting Complex Phenomena in Engineering Structures and Materials -- project number 441153493 --, and the support by the Gutenberg Research College, JGU Mainz.

\appendix

\section{Non-dimensional formulation}\label{subsec:dim less}

We re-scale the system  based on the following dimensionless variables:
  \begin{align}\label{eq: ref values}
    \bx^* =&~ \frac{\bx}{X_0}, \quad t^* = \frac{t}{T_0}, \quad \vv^* = \frac{\vv}{V_0}, \quad \rho^* = \frac{\rho}{\rho_1},  \quad \eta^* = \frac{\eta}{\eta_1}, \nn\\
    p^* =&~ \frac{p}{\rho_1 V_0^2}, \quad \mu^* = \frac{\mu}{\rho_1 V_0^2}, \quad m^* = m \rho_1 V_0,
  \end{align}
  where $X_0$ is a characteristic length scale, $T_0$ is a characteristic time scale, and $V_0=X_0/T_0$ is a characteristic velocity. The dimensionless system reads:
\begin{subequations}\label{eq: model orig dim less}
  \begin{align}
   \partial_t (\rho \vv) + \div \left( \rho \vv\otimes \vv \right) + \nabla p + \phi \nabla \mu  & \nn\\
    - \frac{1}{\mathbb{R}{\rm e}}\div \left(   \eta (2\mathbf{D}+\lambda({\rm div}\vv) \mathbf{I}) \right)+ \frac{1}{\mathbb{F}{\rm r}^2} \rho\boldsymbol{\jmath} &=~ 0, \label{eq: model orig: mom dim less}\\
   \div (\vv) - \alpha \mathbb{C}{\rm n}^2\div \left(m \nabla (\mu+\alpha p)\right) &=~0, \label{eq: model orig: cont dim less} \\
  \partial_t \phi +   \div (\vv\phi) -  \mathbb{C}{\rm n}^2\div \left(m \nabla (\mu+\alpha p)\right) &=~0,\label{eq: model orig: PF dim less}\\
  \mu - \frac{1}{\mathbb{W}{\rm e}\mathbb{C}{\rm n}} f'(\phi)+\frac{\mathbb{C}{\rm n}}{\mathbb{W}{\rm e}} \Delta \phi&=~0,
  \end{align}\label{eq: model orig: mu dim less}
\end{subequations}
where we have omitted the $*$ symbols. The dimensionless coefficients are the Reynolds number ($\mathbb{R}{\rm e}$), the Weber number ($\mathbb{W}{\rm e}$), the Froude number ($\mathbb{F}{\rm r}$) and the Cahn number ($\mathbb{C}{\rm n}$) given by:
\begin{subequations}\label{eq: dimensionless quantities}
\begin{align}
     \mathbb{R}{\rm e} =~ \frac{\rho_1 V_0 X_0}{\eta_1},\quad\mathbb{W}{\rm e} =~ \frac{\rho_1 V_0^2 X_0}{\sigma},\quad\mathbb{F}{\rm r} =~ \frac{V_0}{\sqrt{g X_0}},\quad \mathbb{C}{\rm n} =~ \frac{\varepsilon}{ X_0}. 
\end{align}
\end{subequations}
The associated energies density take the form:
\begin{align}
    K =~ \tfrac{1}{2}\rho \vv\cdot \vv, \qquad G =~ \frac{1}{\mathbb{F}{\rm r}^2}\rho y, \qquad \Psi =~ \frac{\mathbb{C}{\rm n}}{2\mathbb{W}{\rm e}}  \nabla \phi\cdot\nabla \phi + \frac{1}{\mathbb{W}{\rm e}\mathbb{C}{\rm n}} f(\phi).
\end{align}

The dynamics of rising bubble problems are typically characterized by the E\"{o}tv\"{o}s number ($\mathbb{E}{\rm o}$) and the Archimedes number ($\mathbb{A}{\rm r}$):
\begin{subequations}\label{eq: dimensionless quantities 2}
\begin{align}
     \mathbb{E}{\rm o} =&~ \frac{\rho_1 g D_0^2}{\sigma},\\
     \mathbb{A}{\rm r} =&~ \dfrac{\rho_1\sqrt{g D_0^3}}{\nu_1},
\end{align}
\end{subequations}
where $D_0$ represents the diameter of a bubble. The E\"{o}tv\"{o}s number, also referred to as the Bond number, quantifies the balance between gravitational and surface tension forces. Meanwhile, the Archimedes number represents the ratio of buoyancy to viscous forces, capturing the influence of fluid inertia in the system. By choosing
    \begin{align}
        X_0 = D_0, \qquad U_0 = \dfrac{D_0}{T_0}, \qquad T_0 = \sqrt{\dfrac{\rho_1 D_0^3}{\sigma}},
    \end{align}
the dimensionless numbers are related via:
    \begin{align}
        \mathbb{R}{\rm e} = \mathbb{E}{\rm o}^{-1/2}\mathbb{A}{\rm r}, \qquad \mathbb{F}{\rm r} = \mathbb{E}{\rm o}^{-1/2}, \qquad 
        \mathbb{W}{\rm e} = 1,
    \end{align}
where $T_0$ is the capilary time scale.

%=============================================
\bibliographystyle{unsrt}
\bibliography{references}
%=============================================

\end{document}